\documentclass[11pt,twoside]{article}
\usepackage{lipsum}
\usepackage{framed}
\usepackage[a4paper,left=2.5cm,right=2.5cm,top=2.5cm,bottom=2.5cm]{geometry}

\usepackage{xcolor, soul}
\sethlcolor{yellow}

\usepackage{algorithm}

\usepackage{mdframed}

\usepackage{parskip}
\usepackage{enumitem}
\setlist[itemize]{leftmargin=.25in}
\setlist[enumerate]{
  label=(\roman*),
  ref=\theassumption(\roman*),
  leftmargin=*
}
\AtBeginEnvironment{definition}{\vspace{-\parskip}\leavevmode}
\AtBeginEnvironment{proposition}{\vspace{-\parskip}\leavevmode}
\AtBeginEnvironment{theorem}{\vspace{-\parskip}\leavevmode}
\AtBeginEnvironment{assumption}{\vspace{-\parskip}\leavevmode}
\AtBeginEnvironment{lemma}{\vspace{-\parskip}\leavevmode}
\AtBeginEnvironment{corollary}{\vspace{-\parskip}\leavevmode}

\makeatletter
\AddToHook{cmd/appendix/before}{
  \def\cref@section@alias{appendix}
  \def\cref@subsection@alias{appendix}
}
\makeatother

\usepackage{amsmath, amsthm}
\usepackage{libertine}
\usepackage[libertine]{newtxmath}
\usepackage{microtype}

\definecolor{DarkRed}{RGB}{120, 0, 0}
\definecolor{DarkBlue}{RGB}{27, 69, 148}
\definecolor{DarkGray}{RGB}{60, 60, 60}
\usepackage{hyperref}
\hypersetup{
  colorlinks=true,
  linkcolor=DarkBlue,
  citecolor=DarkRed,
  urlcolor=DarkGray
}

\mdfdefinestyle{mdframedperso}{%
    linecolor=black,
    innertopmargin=2pt,
    innerbottommargin=8pt,
    skipabove=6pt,
    skipbelow=6pt,
        }
\mdfdefinestyle{mdframethm}{%
    linecolor=white,
    innertopmargin=2pt,
    innerbottommargin=8pt,
    skipabove=6pt,
    skipbelow=6pt,
    backgroundcolor=gray!15!white
        }

\usepackage{etoolbox}

\usepackage{graphicx}
\usepackage{stmaryrd}
\usepackage{epstopdf}
\usepackage{algorithmic}
\usepackage{bbm}
\usepackage[numbers]{natbib}
\usepackage{multirow}

\ifpdf
  \DeclareGraphicsExtensions{.eps,.pdf,.png,.jpg}
\else
  \DeclareGraphicsExtensions{.eps}
\fi

\newcommand{\R}{\mathbb{R}}

\newcommand{\N}{\mathbb{N}}

\newcommand{\dom}{\operatorname{dom}}
\newcommand{\Int}{\operatorname{int}}
\newcommand{\spnorm}[1]{%
  \left|\!\left|\!\left| #1 \right|\!\right|\!\right|}

\usepackage[nameinlink]{cleveref}
\theoremstyle{remark}
\newtheorem{remark}{Remark}[section]

\newtheorem{example}{Example}[section]

\theoremstyle{plain}

\newtheorem{assumption}{Assumption}

\crefname{hypothesis}{hypothesis}{Hypotheses}
\crefname{assumption}{assumption}{assumptions}
\Crefname{assumption}{Assumption}{Assumptions}

\author{
Maxence Adly$^{1}$,
Alix Chazottes$^{1,2,*}$,
Émilie Chouzenoux$^{1}$,
Jean-Christophe Pesquet$^{1}$ \&
Florent Sureau$^{2}$\\[1ex]
{\small $^{1}$ Université Paris-Saclay, CentraleSupélec, Inria, Centre pour la Vision Numérique, Gif-sur-Yvette, France}\\
{\small
$^{2}$ Université Paris-Saclay, Laboratoire BioMaps, CEA,
Orsay, France}\\
{\small
$^{*}$ Corresponding author: {\footnotesize \href{mailto:alix.chazottes@centralesupelec.fr}{alix.chazottes@centralesupelec.fr}}
}
}

\newlist{subassumptions}{enumerate}{1}
\setlist[subassumptions]{
  label=(\alph*), 
  ref=\theassumption(\alph*),
  parsep=0pt,
  leftmargin=*
}
\crefname{subassumptionsi}{assumption}{assumptions}
\Crefname{subassumptionsi}{Assumption}{Assumptions}
\crefname{subassumptions}{assumption}{assumptions}
\Crefname{subassumptions}{Assumption}{Assumptions}

\usepackage[caption=false]{subfig}
\usepackage[font=footnotesize, labelfont=bf]{caption}
\usepackage{stackengine} 
\usepackage{amsopn}

\usepackage{booktabs}

\newcommand{\dt}{\,\mathrm{d}t}

\usepackage{tikz}
\usetikzlibrary{arrows.meta, positioning, calc}

\newcommand{\majO}{\ensuremath{1}}
\newcommand{\majII}{\ensuremath{2}}
\newcommand{\majIII}{\ensuremath{3}}
\newcommand{\majVIII}{\ensuremath{4}}
\newcommand{\majI}{\ensuremath{5}}
\newcommand{\majVI}{\ensuremath{6}}
\newcommand{\majV}{\ensuremath{7}}
\newcommand{\majVp}{\ensuremath{8}}
\newcommand{\majVII}{\ensuremath{9}}

\makeatletter
\renewcommand{\maketitle}{%
  \begin{center}
    \rule{\textwidth}{0.4pt}\par
    \vspace{0.6em}
    {\LARGE \bfseries \@title \par}
    \vspace{0.6em}
    \rule{\textwidth}{0.4pt}\par
    \vspace{1.2em}
    {\normalsize \@author \par}
  \end{center}
  \vspace{1em}
}
\makeatother

\newmdtheoremenv[style=mdframethm]{theorem}{Theorem}[section]
\newmdtheoremenv[style=mdframethm]{proposition}{Proposition}[section]
\newmdtheoremenv[style=mdframethm]{lemma}{Lemma}[section]
\newmdtheoremenv[style=mdframethm]{corollary}{Corollary}[section]
\newmdtheoremenv[style=mdframethm]{definition}{Definition}[section]

\numberwithin{equation}{section}
\numberwithin{assumption}{section}

\newcommand{\reviewchanges}[1]{#1}

\usepackage[toc,page]{appendix}
\title{ Variable Bregman Majorization–Minimization algorithms \\for nonconvex nonsmooth optimization, \\ with application to Poisson imaging}

\newcommand{\shortauthors}{M. \textsc{Adly}, A. \textsc{Chazottes}, E. \textsc{Chouzenoux}, J.-C. \textsc{Pesquet} \& F. \textsc{Sureau}}
\newcommand{\shorttitle}{Variable Bregman MM for nonconvex nonsmooth optimization with applications to Poisson imaging}

\usepackage{fancyhdr}
\begin{document}
\thispagestyle{plain}
\maketitle

\begin{abstract} 
In this work, we introduce a unifying Bregman-based majorization-minimization (MM) framework for solving nonconvex nonsmooth optimization problems. The proposed approach leverages Bregman divergences, possibly varying across iterations, to construct tailored surrogate functions that majorize the objective.
We establish the convergence of the iterates of the resulting variable Bregman MM algorithm to critical points under the Kurdyka–\L{}ojasiewicz 
property, relaxing standard assumptions such as the Lipschitz smoothness of the nonconvex objective function. We derive a constructive methodology to build a broad class of variable Bregman majorants with tractable minimizers. Our study encapsulates various existing majorization techniques, in particular those derived for Poisson data fidelity terms in imaging inverse problems. Numerical experiments on Positron Emission Tomography (PET) image reconstruction with a nonconvex regularizer showcase the practical feasibility of the proposed scheme. 

\end{abstract}

{\small
\paragraph{Keywords.}
majorization--minimization, Bregman divergence, variable metric, Kurdyka--\L{}ojasiewicz property, global convergence, nonsmooth analysis, PET image reconstruction.
}


\tableofcontents

\vspace{1cm}
\section{Introduction and related work}
\label{sec:intro}

Over the last decades, \emph{majorization–minimization} (MM) techniques \cite{Hunter2004,MM_surrogate,Sun2017} have emerged as a unifying approach for tackling a wide range of nonconvex optimization tasks \cite{Naderi2019, Hien2023,Kang2021,Repetti2021,Necoara2025}, with various successful applications in the field of imaging inverse problems~\cite{ChouzenouxPesquetRepetti2015,mmsubspaceL2L0,Zhou2024,Li_2025,Chouzenoux2023}. 
By iteratively constructing convex surrogates (or majorants) of the nonconvex objective, these methods provide a stable descent mechanism at each iteration. Furthermore, as shown for instance in  \cite{mmsubspaceL2L0,Hien2023}, MM techniques can benefit from proven convergence of their iterates toward a critical point in the challenging nonconvex setting, by leveraging powerful proof mechanisms based on Kurdyka–\L{}ojasiewicz (K\L{})  assumption introduced in~\cite{Attouch2011}. Convergence results highly depend on the structure of the majorant functions, and most existing theoretical results on MM methods assume majorization inequalities that hold within a Euclidean geometry (i.e., quadratic majorant functions), with an underlying metric possibly varying along iterations \cite{ChouzenouxPesquetRepetti2015,Repetti2021}. 

However, such a Euclidean structure does not fit well classes of problems where the objective function is not Lipschitz differentiable on its domain. A typical example,  widely encountered in computational imaging, arises with functions containing (minus) logarithmic terms, possibly composed with linear operators. The barrier behavior of the logarithm around zero makes the objective function and its gradient unbounded at the border of its domain, preventing the construction of meaningful quadratic majorant functions. Various MM techniques have been proposed in the past to account for this situation, motivated by the applicative problem of image restoration/reconstruction problems under Poisson noise \cite{Bertero_2009}, starting from the historical maximum likelihood expectation maximization (ML-EM) algorithm \cite{Dempster1977,DePierro1993} and its multiple extensions \cite{Fessler1993,ErdoganFessler1999}. Convergence has been studied for instance in \cite{EGGERMONT199025,Bertero2022,Jacobson2007}, leveraging properties of the Kullback-Leibler divergence and MM update rules. However, there remains a crucial need to unify these methods and to better characterize their convergence guarantees in the general nonconvex nonsmooth case, in order to address a wider class of penalized optimization problems.

A key concept adopted in this work is the use of \emph{Bregman divergences} whose fundamental role in optimization has been pointed out in main contributions
\cite{BauschkeBorwein1997,BauschkeCombettes2017,Mukkamala2021,Nguyen2016},
rooted in the literature of variational analysis \cite{Rockafellar1970,RockafellarWets}. Bregman majorant functions varying along iterations, and the consequent variable Bregman MM algorithms, were introduced in our preliminary study \cite{Rossignol2022}. Bregman divergences provide a more general framework than Euclidean metrics, including in particular the Kullback-Leibler divergence. They hence lead to more flexible choice of majorant approximations, beyond classical quadratic bounds, that not only better fit singular functions such as logarithmic barriers, but naturally account for positivity constraints arising in image processing. The recent paper \cite{segolene2025_vbmm_dirichlet} proves the subsequential convergence of the variable Bregman MM method in the convex case, and displays its application to maximum likelihood estimation. 

\reviewchanges{The contributions of this paper are as follows:
\begin{itemize}
    \item We prove the convergence of the variable Bregman MM iterates in the nonconvex, nonsmooth setting under the K\L{} assumption. To do so, we establish the well-posedness of our algorithm under our technical assumptions and demonstrate sufficient decrease and error conditions, showing that the proposed scheme fits into the \emph{gradient-like descent} framework from~\cite{BolteFirstOrder}.
    \item We introduce a constructive and unifying analysis for the design of variable Bregman majorant functions, placing particular emphasis on logarithmic objectives, an important use case widely explored in the computational imaging literature. Thus, our framework encompasses several ad hoc EM-based algorithms used as state-of-the-art solvers for Poisson imaging inverse problems and enables the construction of new algorithms. 
    \item We perform numerical benchmarking on a tomographic image reconstruction problem under Poisson noise with nonconvex regularization. Through an ablation study, we illustrate the benefits of variable non-Euclidean majorant metrics, both in terms of image quality and convergence speed.
\end{itemize}}


Related concepts can be found in the works of~\cite{Hurault2023,bauschke_descent_2017,Mukkamala2021}, which also rely on Bregman algorithmic updates with convergence guarantees in a nonconvex setting. However, these works did not consider adaptive Bregman majorant functions, in the sense that the induced Bregman metric was not allowed to vary along iterations. In contrast, our approach considers variable Bregman metrics, which naturally extends variable metric Euclidean-based algorithms such as~ \cite{Frankel_Garrigos_Peypouquet,ChouzenouxPesquetRepetti2015,bonettini2016}. \reviewchanges{As shown in our numerical experiments on Poisson image reconstruction, combining \emph{variable} metrics with \emph{non-quadratic} majorants is essential for achieving both fast convergence in practice and high image quality.}  

The rest of this paper is structured as follows.
First, in \cref{sec:preliminaries} we introduce the core mathematical tools such as Bregman divergences and the K\L{} inequality. 
In \cref{sec:general-pb} we present the optimization problem, the main assumptions, and our proposed algorithm.
\Cref{sec:convergence-analysis} presents convergence results for our Bregman MM algorithm. 
In \cref{sec:unifying-framework} we develop a unifying framework for constructing Bregman tangent majorants and propose a wide family of such majorants for Poisson data fidelity terms. Finally, in \cref{sec:Appli-PET} we illustrate the applicability of the proposed algorithm and discuss the comparative performance of the majorants in the context of Positron Emission Tomography (PET) image reconstruction.

\section{Preliminaries}
\label{sec:preliminaries}
In this section, we provide a set of optimization tools that will be useful in deriving our algorithm and analyzing its convergence.

\textbf{Notation:} We denote by $\Gamma_0(\R^N)$ the set of proper lower semicontinuous convex functions $f: \R^N \to \R \cup \{+\infty\}$. The domain of $f$, i.e., the set of vectors $x$ in $\R^N$, such that $f(x) < +\infty$, is denoted by $\dom(f)$. The graph of $f$, i.e., the set of pairs $(x,f(x))$, for every $x \in \R^N$, is denoted by $\operatorname{graph}(f)$. For a subset $A \subset \R^N$, we denote $\bar{A}$ its closure, $\Int(A)$ its interior, and $\operatorname{bd}(A) = \bar{A}\setminus\Int(A)$ its boundary. $\mathbf{I}_N$ is the identity matrix of $\mathbb{R}^N$.
The inner product is denoted by $\langle \cdot,\cdot \rangle$
and the Euclidean norm is $\|\cdot\|$.

    \subsection{The Bregman framework} 
   
    We begin by providing useful reminders on the framework of Bregman divergences induced by Legendre functions, as defined for example in \cite{Rockafellar1970},\cite{BauschkeBorwein1997}.

    \begin{definition}{\textbf{(Legendre function)}}\\
    Let $h\in\Gamma_0(\R^N)$. Function $h$ is called  \begin{itemize}
        \item[$\bullet$] \textsc{Essentially smooth} if $h$ is differentiable on $\Int(\dom(h))$ and, for all sequences $(x^k)_{k \in \N} \in \R^N$, converging to some $x^\ast \in \operatorname{bd}(\dom(h))$, we have $\Vert \nabla h(x^k) \Vert \xrightarrow{k \to \infty} +\infty$;   
        \item[$\bullet$] Of \textsc{Legendre type} if $h$ is essentially smooth and strictly convex on $\Int(\dom(h))$.
        
    \end{itemize}
    We denote by $\mathcal{L}(\R^N)$ the class of Legendre functions on $\R^N$.
    \end{definition}

    The class of Legendre functions benefits from properties that will be useful for the analysis. In particular, let us remind the gradient bijection property.

    \begin{proposition}{\textbf{(Gradient of a Legendre function)}} \cite{Rockafellar1970} \label{prop:gradlegendre}\\
        Let $h \in \mathcal{L}(\R^N)$, we denote by $h^\ast$ its Legendre-Fenchel conjugate. Then $\nabla h$ is a bijection from $\Int(\dom(h))$ to $\Int(\dom(h^\ast))$ and its inverse is the gradient of the conjugate: $(\nabla h)^{-1} = \nabla h^\ast$.
        
    \end{proposition}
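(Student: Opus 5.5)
The plan is to combine the duality between $h$ and $h^\ast$ with Legendre-type properties on both sides, following Rockafellar's Theorem 26.5. The first step is to show that $h^\ast$ is itself of Legendre type. Because $h \in \Gamma_0(\R^N)$, we have $h^{\ast\ast} = h$, and the subdifferentials are inverse to each other: $y \in \partial h(x) \iff x \in \partial h^\ast(y)$. The key observation is that, for a proper lsc convex function, essential smoothness of $h$ is equivalent to essential strict convexity of $h^\ast$, and vice versa. I would take this equivalence from \cite{Rockafellar1970} (Theorem 26.3). Since $h$ is both essentially smooth and strictly convex on $\Int(\dom(h))$, it is essentially strictly convex, so $h^\ast$ is both essentially smooth and essentially strictly convex, that is, Legendre.

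The second step uses the fact that an essentially smooth function has a tightly controlled subdifferential. Specifically, $\partial h(x) = \{\nabla h(x)\}$ for $x \in \Int(\dom(h))$, and $\partial h(x) = \emptyset$ elsewhere (Rockafellar, Theorem 26.1). The proof of this rests on two ingredients. Away from the interior, any subgradient at a boundary point could be approximated by gradients at nearby interior points, which would contradict the blow-up $\|\nabla h(x^k)\| \to +\infty$. At interior points, differentiability forces the subdifferential to be a singleton. Applying the same result to $h^\ast$ gives $\operatorname{dom}\partial h^\ast = \Int(\dom(h^\ast))$, with $\partial h^\ast = \nabla h^\ast$ there.

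Combining the two steps gives the result. The map $\nabla h$ sends $\Int(\dom(h))$ into $\operatorname{range}\partial h = \operatorname{dom}\partial h^\ast = \Int(\dom(h^\ast))$. The map $\nabla h^\ast$ sends $\Int(\dom(h^\ast))$ back into $\Int(\dom(h))$. The inversion relation $y = \nabla h(x) \iff x = \nabla h^\ast(y)$ shows that these two maps are mutually inverse. Hence $\nabla h$ is a bijection with inverse $\nabla h^\ast$.

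I expect the main obstacle to be the first step. Transferring strict convexity of $h$ into essential smoothness of $h^\ast$ requires showing two things: that $\partial h^\ast$ is single-valued, and that $\partial h^\ast$ is empty on the boundary. Single-valuedness holds because $x_1, x_2 \in \partial h^\ast(y)$ means that $y$ supports $h$ at both points, which is impossible under strict convexity. Emptiness on the boundary also requires that $\partial h$ be empty outside the interior. Since this is classical, in the paper I would cite \cite{Rockafellar1970} for this equivalence rather than reprove it.
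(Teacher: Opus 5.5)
Your argument is correct and is the standard proof of Rockafellar's Theorem~26.5: Theorem~26.3, Theorem~26.1, and the inversion $\partial h^\ast=(\partial h)^{-1}$. The paper relies on exactly this, since it states the proposition with that citation and gives no proof of its own. One small ordering point: deducing essential strict convexity of $h$ from strict convexity on $\Int(\dom(h))$ already uses $\dom\partial h=\Int(\dom(h))$ from Theorem~26.1, so invoke that fact before Theorem~26.3 rather than after.
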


    Given the class of Legendre functions, the Bregman distance can be defined as follows.
    
    \begin{definition}{\textbf{(Bregman Distance)}}\\
        Let $h \in \mathcal{L}(\R^N)$. The \textsc{Bregman distance} associated with $h$ evaluated at $(x,y)\in \dom(h)\times\Int(\dom(h))$ reads 
        \begin{equation}
            D_{h}(x,y) = h(x)-h(y) - \langle \nabla h (y), x-y\rangle.\label{eq:Bregman}
        \end{equation}
    \end{definition}
The above definition basically expresses the Bregman distance, as the difference at $x$ between $h$ at $x$ and its first order Taylor expansion around $y$.


    \begin{example}
            Let us now give a few important examples of Bregman distances that are widely used in the optimization literature, as well as their associated domain of validity. 
        \begin{enumerate}
            \item[$\bullet$] \textsc{Euclidean distance:} The Bregman distance generated by $h: x \longmapsto \| x \|^2$ is equivalent to the regular Euclidean distance. 
            
            \item[$\bullet$] \textsc{Itakura–Saito distance:} The Bregman distance generated by $h: x = (x_n)_{1\le n \le N} \in (0,+\infty)^N \longmapsto -\sum_{n=1}^N \log(x_n)$ \textit{(Burg's entropy)} is 
            sometimes referred to, as the Itakura\-–Sai\-to distance (or divergence) \cite{Banerjee2004, Fevotte2009}.
             
            \item[$\bullet$] \textsc{Kullback-Leibler divergence:} \reviewchanges{The Bregman distance generated by $h: x= (x_n)_{1\le n \le N} \in [0,+\infty)^N \longmapsto \sum_{n=1}^N x_n\log(x_n)$ \textit{(the negative Shannon's entropy)} is the generalized Kullback--Leibler divergence.} 

            The above function will play a fundamental role in our analysis in \cref{sec:unifying-framework}, and in our experiments in \cref{sec:Appli-PET}. 
        \end{enumerate}
    \end{example}

    \noindent These examples illustrate the flexibility of Bregman divergences, which can be tailored to different data structures and problems.



    
    \subsection{Variational analysis}
    We now present the subdifferentiation tools necessary for our convergence proof.
    The classical definition of the limiting subdifferential $\partial f$ of a proper lower-semicontinuous function
    $f: \R^N \to \R\cup\{+\infty\}$ will be used.
        
%
%
%
A fundamental property of the limiting subdifferential is its \textit{closedness}. 
For any sequence $(x^k,u^k)_{k\in \N}\in \text{graph}(\partial f)^\N$ converging to $(\bar{x},\bar{u})$, if $(f(x^k))_{k\in \N}$ converges to $f(\bar{x})$, then $$(\bar{x},\bar{u}) \in \text{graph}(\partial f).$$
The above result is useful to prove that the limit point of a generated sequence is a critical point, that is an element of the set 
defined as
\begin{equation}
    \text{crit}(f)=\{x \in \R^N\mid 0 \in \partial f(x)\}.
\end{equation}




    \subsection{Kurdyka-\L{}ojasiewicz Property}
    
    Finally, we recall here an essential property that will be key in the derivation of the global convergence of the algorithm.
    
    \begin{definition}
    \cite{Bolte2014}
\begin{itemize}
\item For every $\eta>0$, let 
    \begin{equation}
        \Phi_\eta = \Big\{\phi \in \mathcal{C}^0([0,\eta))\cap\mathcal{C}^1((0,\eta))\mid \phi(0)=0, \; \phi \text{ concave },\; \phi'>0 \text{ on } [0,\eta) \Big\}
    \end{equation}
be the class of \textsc{desingularizing} functions
on $[0,\eta)$.
\item 
        A function $f: \R^N \to \R\cup\{+\infty\}$ is said to satisfy the \textsc{Kurdyka–\L{}ojasiewicz} (K\L{}) property at $\Bar{x} \in \dom(\partial f)$ if there exist $\eta > 0$,  a neighborhood $U$ of $\Bar{x}$,       
and $\phi\in \Phi_\eta$
%
    such that, for every $x \in U \cap [f(\Bar{x})<f(x)<f(\Bar{x})+\eta]$, the \textsc{K\L{} inequality} hereafter holds: 
    \vspace{-0.5cm}
    \begin{equation}
        \phi'(f(x)-f(\Bar{x}))d(0,\partial f(x)) \geqslant 1,
    \end{equation}
 where \reviewchanges{$d(0,\partial f(x)):= \inf \{\|u\|\mid u \in \partial f(x)\}$ denotes the distance from the origin to the limiting subdifferential $\partial f(x)$}.
    \end{itemize}
    \end{definition}

    One can see that the K\L{} property is trivially verified at non critical points of a lower-semicontinuous proper function $f$ (by choosing $\phi$ linear).

    \begin{theorem}{\textbf{(Uniformized K\L{} property)}}\cite[Lemma 3.6]{Bolte2014}\\
    \label{thm:uniform-KL}
    Let $K$ be a nonempty compact subset of $\R^N$ and let $f: \R^N \to \R\cup\{+\infty\}$ be a proper lower-semicontinuous function. Assume that $f$ is constant on $K$ and that it satisfies the K\L{} property at each point of $K$. Then there exist $\eta >0$, $\varepsilon >0$, and $\phi \in \Phi_\eta$ such that 
    \begin{align}
        (\forall \Bar{x}\in K) (\forall x \in \R^N) 
     \begin{cases}
     d(x,K)<\varepsilon\\
     f(\Bar{x})<f(x)<f(\bar{x})+\eta
     \end{cases}
     \; \Longrightarrow\;
    \phi'(f(x)-f(\Bar{x}))
    d(0,\partial f(x))
    \geqslant 1.
    \end{align}
    \end{theorem}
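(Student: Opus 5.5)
The plan is the standard compactness argument of \cite[Lemma 3.6]{Bolte2014}: glue finitely many local K\L{} neighborhoods together. Let $\zeta$ denote the common value of $f$ on $K$. For each $\bar{x}\in K$, the K\L{} property at $\bar{x}$ gives $\eta_{\bar x}>0$, a radius $\varepsilon_{\bar x}>0$ (we may shrink $U$ to an open ball $B(\bar x,\varepsilon_{\bar x})$), and $\phi_{\bar x}\in\Phi_{\eta_{\bar x}}$ such that the K\L{} inequality holds on $B(\bar x,\varepsilon_{\bar x})\cap[\zeta<f<\zeta+\eta_{\bar x}]$. Note that $f(\bar x)=\zeta$ for all $\bar x\in K$, so every level condition refers to the same value $\zeta$.

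First, I use compactness. The balls $B(\bar x,\varepsilon_{\bar x}/2)$ cover $K$, so finitely many of them suffice; call their centers $x_1,\dots,x_p$. Set $\eta=\min_i\eta_{x_i}>0$ and $\varepsilon=\min_i\varepsilon_{x_i}/2>0$. Now take any $x$ with $d(x,K)<\varepsilon$. By compactness there is $y\in K$ with $\|x-y\|<\varepsilon$, and $y$ lies in some ball $B(x_i,\varepsilon_{x_i}/2)$. The triangle inequality then gives $x\in B(x_i,\varepsilon_{x_i})$. So the local K\L{} inequality with $\phi_{x_i}$ applies whenever $\zeta<f(x)<\zeta+\eta$.

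Second, I build a single desingularizing function. Define on $[0,\eta)$
\begin{equation*}
\phi(s)=\sum_{i=1}^p \phi_{x_i}(s).
\end{equation*}
Each $\phi_{x_i}$ restricted to $[0,\eta)$ is continuous, $C^1$ on $(0,\eta)$, concave, vanishes at $0$, and has positive derivative. Finite sums preserve all of these properties, so $\phi\in\Phi_\eta$. Since each $\phi_{x_i}'>0$, we have $\phi'\ge\phi_{x_i}'$ for every $i$. Hence, for the index $i$ selected above,
\begin{equation*}
\phi'(f(x)-\zeta)\,d(0,\partial f(x))\ \ge\ \phi_{x_i}'(f(x)-\zeta)\,d(0,\partial f(x))\ \ge 1.
\end{equation*}
Because $f(\bar x)=\zeta$ for every $\bar x\in K$, this is exactly the claimed inequality, uniformly in $\bar x\in K$.

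The main obstacle is the construction of a common $\phi$. A pointwise minimum or maximum of the $\phi_{x_i}$ would fail to be $C^1$, and possibly fail to be concave. Summing is the device that keeps concavity and differentiability while dominating every derivative. The only other delicate points are minor: the K\L{} inequality is only asserted on $(0,\eta)$, where $\phi'$ is defined, and the restriction of each $\phi_{x_i}$ to the smaller interval $[0,\eta)$ remains in $\Phi_\eta$.
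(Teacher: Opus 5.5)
Your proof is correct and is essentially the argument of the cited \cite[Lemma 3.6]{Bolte2014}; the paper gives no proof of its own beyond this citation. In both, $K$ is covered by finitely many K\L{} balls, one sets $\eta=\min_i \eta_{x_i}$, and the summed desingularizer $\phi=\sum_i \phi_{x_i}$ dominates every $\phi_{x_i}'$. The only cosmetic difference is that you make the choice of $\varepsilon$ explicit with half-radius balls, whereas Bolte--Sabach--Teboulle just take $\varepsilon$ small enough that the $\varepsilon$-neighborhood of $K$ lies in the union of the finitely many balls.
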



\section{Mathematical Model} \label{sec:general-pb}

We are now ready to present the problem formulation, our assumptions, and the proposed algorithm.

    \subsection{Problem formulation}
    In what follows, we will be interested in the resolution of the following non necessarily convex optimization problem:
    \begin{equation}
    \label{eq:mainpb}
          \quad \operatorname*{minimize}_{x \in \mathbb{R}^N} F(x) = f(x)+g(x).
    \end{equation}
   We start by stating below our first set of assumptions on the involved functions in \eqref{eq:mainpb}.
   
    \begin{mdframed}[style=mdframedperso]
    \begin{assumption}\label{ass:A1}
    In problem \eqref{eq:mainpb},
    \vskip 3mm
        \begin{subassumptions}
        
            \item\label{ass:A1:i} 
\reviewchanges{$f:\mathbb{R}^N\to\mathbb{R}\cup\{+\infty\}$ is a proper function such that
$\mathcal{D}:=\operatorname{int}(\operatorname{dom} f)$ is nonempty and
$f$ is continuously differentiable on $\mathcal{D}$.}
             \vskip 3mm
             \item\label{ass:A1:ii} \reviewchanges{$g: \R^N \to \R\cup\{+\infty\}$} is a proper lower semi-continuous convex function with $\dom g \subset \mathcal{D}$;  

            \vskip 3mm
            \item\label{ass:A1:iv} Function $F$ is coercive, continuous on its domain and lower bounded by $\underline{F} > -\infty$.
        \end{subassumptions}
    \end{assumption}
    \end{mdframed}

    \cref{ass:A1:i,ass:A1:ii} 
 are standard when solving composite continuous optimization problems with proximal gradient approaches.  \cref{ass:A1:iv} is a typical hypothesis ensuring the well posedness of the minimization problem, and, in particular, the existence of solutions.

    \begin{definition}{\textbf{(Bregman Tangent Majorant)}}\\
        Let $f$ be a differentiable function on an open set $\mathcal{D}\subset \R^N$, $z \in \mathcal{D}$ a reference point and $h_z \in \mathcal{L}(\R^N)$. Let us define
        \begin{equation}
           (\forall x \in \mathcal{D}) \quad  Q_f(x,z) = f(z) + \langle \nabla f(z), x-z \rangle + D_{h_z}(x,z). \label{eq:Bregmandef1}
        \end{equation}
        $Q_f(\cdot,z): \mathcal{D} \to \R$ in \eqref{eq:Bregmandef1} is a \textsc{Bregman Tangent Majorant} of $f$ at $z \in \mathcal{D}$, associated with $h_z$, if the following majorizing inequality holds:
        \begin{equation}
        (\forall x \in \mathcal{D}) \quad 
            f(x) \leqslant Q_f(x,z). \label{eq:Bregmandef2} 
        \end{equation}
\label{def:BregmanMaj}
\vspace{-0.5cm}
    \end{definition}
    The key original element of our construction is that the Bregman metric function $h_z$ can vary according to the reference point $z$, allowing the majorant to dynamically adjust, thereby encapsulating more complex local geometrical constraints of the function we wish to majorize. This is in relation with the previous work in \cite{ChouzenouxPesquetRepetti2015}, also based on variable metrics, although it was restricted to the Euclidean case. See \cref{other-algs} for more discussion.

    To guarantee the applicability of our framework, we introduce the following assumption. 
    
    \begin{mdframed}[style=mdframedperso]
    \begin{assumption}
    \label[assumption]{ass:A2}
    Function $f$ in problem \eqref{eq:mainpb}, admits a Bregman tangent majorant at every point $z \in \mathcal{D}$, associated with a family of {Legendre} functions $(h_z)_{z \in \mathcal{D}}$ satisfying $$(\forall z \in \mathcal{D}) \quad \mathcal{D} \subset \Int(\dom(h_z)).$$
    \end{assumption} 
    \end{mdframed}

\Cref{sec:unifying-framework} will provide examples of constructions and properties of Bregman tangent majorants, for an important class of functions in image recovery, involving a Kullback-Leibler divergence composed with a linear operator.
    


    \subsection{Proposed algorithm}

    Under Assumptions \ref{ass:A1} and \ref{ass:A2}, we can construct our proposed MM algorithm to solve problem \eqref{eq:mainpb}. Starting from an element in $\dom g \subset \mathcal{D}$, we iteratively define the next iterate as the minimizer of a Bregman majorant approximation to $F = f + g$ at the current point. To do so, we rely on the following inequality, direct consequence of \eqref{eq:Bregmandef2},  for some $z \in \dom g$:
\begin{equation}
    (\forall x \in \dom g) \quad F(x) \leq Q_f(x,z)+g(x).
\end{equation}
    The resulting numerical solution is presented in  \cref{alg:VBMM}. 
        
        
            
        \begin{algorithm}[H]
            \caption{Variable Bregman MM Algorithm to solve problem \eqref{eq:mainpb}}
            \label{alg:VBMM}
            \begin{algorithmic}
                \STATE \textbf{Input:}
                \STATE \quad Initial point $x^0 \in \dom{g}$
            
                \FOR{$k = 0, 1,  \ldots$}
                    \STATE $x^{k+1} = \operatorname*{argmin}_{x \in \R^N} \left(Q_f(x, x^k) + g(x)\right)$
                \ENDFOR
            
            \end{algorithmic}
        \end{algorithm}

        \begin{remark}
            For generic choices of $Q_f$ and $g$, the optimization problem to solve at each iteration may remain challenging. In \cref{sec:unifying-framework,sec:Appli-PET}, we will present specific construction choices for which the corresponding updates become simple and easy to compute.
        \end{remark}
        
        \begin{remark}\label{remarkEquivalent}
        Let us then provide equivalent forms of the $k$-th iteration of \cref{alg:VBMM} that will be useful in our analysis, using \eqref{eq:Bregmandef1} and \eqref{eq:Bregman}:
    \begin{align} 
    x^{k+1} & = \operatorname*{argmin}_{x \in \R^N} \left(Q_f(x, x^k) + g(x)\right)\\ \nonumber
    & = \operatorname*{argmin}_{x \in \dom g} \left(f(x^k) + \langle \nabla f(x^k), x-x^k \rangle + D_{h_{x^k}}(x,x^k)  + g(x) \right)\\ \nonumber
     & = \operatorname*{argmin}_{x \in \dom g} \left(\langle \nabla f(x^k) - \nabla h_{x^k}(x^k),x\rangle + h_{x^k}(x)+g(x)\right). 
        \end{align}
        \end{remark}
    
We now establish the well definedness of the proposed algorithm.
    \begin{proposition}
    \label{prop:welldef}
    {\textbf{(Well-definedness)}}\\ 
            Under Assumptions \ref{ass:A1} and \ref{ass:A2}, \cref{alg:VBMM} is well defined and generates a sequence $(x^k)_{k \geq 1}$ in $\dom g \subset \mathcal{D}$. Moreover, the iteration satisfies 
             \begin{equation}
                (\forall k \in \N) \; \left(\exists v^{k+1} \in \partial g(x^{k+1})\right) \quad
                x^{k+1} = \nabla h_{x^k}^\ast(\nabla h_{x^k}(x^k)-\nabla f(x^k)-v^{k+1}). \label{eq:iteration-generale}
            \end{equation}
    \end{proposition}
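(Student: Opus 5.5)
We argue by induction on $k$. Assume $x^k \in \dom g \subset \mathcal{D}$; for $k=0$ this holds by the choice of the input. We show that the subproblem $\min_x \Psi_k(x) := Q_f(x,x^k)+g(x)$ has a unique minimizer $x^{k+1}$, that $x^{k+1}\in\dom g$, and that \eqref{eq:iteration-generale} holds. By \cref{remarkEquivalent}, up to a constant,
\[
\Psi_k(x) = \langle \nabla f(x^k)-\nabla h_{x^k}(x^k), x\rangle + h_{x^k}(x) + g(x).
\]
This is well defined because $x^k\in\mathcal{D}\subset \Int(\dom h_{x^k})$ by \cref{ass:A2}, so $\nabla h_{x^k}(x^k)$ exists. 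Let $p_k$ denote this function extended to $\R^N$, with the convention $+\infty$ outside $\dom g$. Then $p_k$ is a linear term plus $h_{x^k}+g$. Since $h_{x^k}\in\Gamma_0(\R^N)$ and $g\in\Gamma_0(\R^N)$, $p_k$ is convex and lower semicontinuous. It is proper because $x^k\in \dom g\subset \mathcal{D}\subset\dom h_{x^k}$.

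\emph{Existence.} Coercivity comes from the majorization. By \eqref{eq:Bregmandef2}, $F(x)\le \Psi_k(x)$ for every $x\in\dom g$. Hence the sublevel set $\{\Psi_k\le \Psi_k(x^k)=F(x^k)\}$ is contained in $\{F\le F(x^k)\}$, which is bounded by the coercivity in \cref{ass:A1:iv}. The sublevel set is also closed, since $p_k$ is lower semicontinuous, and nonempty, since it contains $x^k$. Weierstrass' theorem then yields a minimizer $x^{k+1}$, which lies in $\dom g\subset\mathcal{D}$ because $p_k(x^{k+1})<+\infty$.

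\emph{Uniqueness.} Uniqueness follows from strict convexity of $h_{x^k}$ on $\Int(\dom h_{x^k})\supset\mathcal{D}\supset\dom g$. The sum $p_k$ is therefore strictly convex on $\dom g$, which is the only region where $p_k$ is finite, so the minimizer is unique. This justifies writing $\operatorname{argmin}$ as an equality.

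\emph{Optimality condition.} By Fermat's rule, $0\in\partial p_k(x^{k+1})$. The step needing care, which I expect to be the main obstacle, is the subdifferential sum rule $\partial(h_{x^k}+g)=\nabla h_{x^k}+\partial g$ at $x^{k+1}$. This holds because $h_{x^k}$ is finite and differentiable on the open set $\Int(\dom h_{x^k})$, which contains $x^{k+1}$. Concretely, $h_{x^k}$ is continuous at a point of $\dom g$, so the Moreau–Rockafellar theorem applies. Alternatively, one can use that the limiting subdifferential of a sum of a function that is $C^1$ near the point and an lsc function splits. Thus there exists $v^{k+1}\in\partial g(x^{k+1})$ with
\[
\nabla h_{x^k}(x^{k+1}) = \nabla h_{x^k}(x^k)-\nabla f(x^k)-v^{k+1}.
\]
Since $x^{k+1}\in\Int(\dom h_{x^k})$, \cref{prop:gradlegendre} shows that the right-hand side lies in $\Int(\dom h_{x^k}^\ast)$. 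Applying $(\nabla h_{x^k})^{-1}=\nabla h^\ast_{x^k}$ gives \eqref{eq:iteration-generale}.

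Since $x^{k+1}\in\dom g$, the induction proceeds, and the whole sequence lies in $\dom g\subset\mathcal{D}$.
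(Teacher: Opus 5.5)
Your proposal is correct and follows essentially the same route as the paper: induction, the rewriting of the subproblem via \cref{remarkEquivalent}, existence from coercivity inherited through the majorization of $F$, uniqueness from strict convexity of $h_{x^k}$ on $\Int(\dom h_{x^k})\supset\dom g$, and inversion of the optimality condition through \cref{prop:gradlegendre}. You also make explicit two steps the paper states without comment: the sum rule, via continuity of $h_{x^k}$ on $\dom g$ and Moreau--Rockafellar, and existence, via the sublevel-set inclusion $\{\Psi_k\le F(x^k)\}\subset\{F\le F(x^k)\}$.
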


        \begin{proof}
        We proceed by induction. The initial point $x^0 \in \dom{g}$. Let $k \in \N$ be such that $x^k \in \mathcal{D}$. 
    The function      
            $ \gamma_{x^k}: x \longmapsto\langle \nabla f(x^k) - \nabla h_{x^k}(x^k),x\rangle + h_{x^k}(x)+g(x)
            $, which appears in
            the final expression in Remark \ref{remarkEquivalent}, is well defined as $\dom g \subset \mathcal{D} \subset \Int(\dom h_{x^k} )$. Moreover, it is proper and lower semi-continuous (lsc) by \cref{ass:A1:ii}. Finally, up to an additive constant independent on $x$, $\gamma_{x^k}$ majorizes $F$, which is coercive by \cref{ass:A1:iv}. Hence  $\gamma_{x^k}$ is coercive, and admits a minimizer belonging to $\dom g  \subset\mathcal{D}$. Moreover, since $h_{x^k}$ is of Legendre type, it is strictly convex on $\Int(\dom h_{x^k} )$ which implies that $\gamma_{x^k}$ is strictly convex on $\Int(\dom h_{x^k} ) \supset \mathcal{D} \supset \dom g$, ensuring the uniqueness of its minimizer on $\dom g$. This minimizer $x^{k+1}\in \dom g$ verifies the optimality condition $0 \in \partial \gamma_{x^k}(x^{k+1})$ where
            \begin{equation}
                \partial \gamma_{x^k}(x^{k+1})
            \,=\,
            \nabla f(x^k) - \nabla h_{x^k}(x^k)
            \;+\;
            \nabla h_{x^k}(x^{k+1})
            \;+\;
            \partial g(x^{k+1}).
            \end{equation}
            Hence,
            \begin{equation}
                (\exists v^{k+1} \in \partial g(x^{k+1})) \quad \nabla h_{x^k}(x^{k+1}) = \nabla h_{x^k}(x^k)-\nabla f(x^k) - v^{k+1}.
            \end{equation}
            Using \cref{prop:gradlegendre} allows us to express $x^{k+1}$ as in 
            \eqref{eq:iteration-generale}.
        \end{proof}

    \subsection{Relation with existing algorithms}\label{other-algs} To illustrate the versatility of \cref{alg:VBMM}, in this section we show how it encompasses several other well‑known methods.
     \subsubsection{Variable metric forward-backward algorithm}

     Let us first consider the case when \cref{ass:A2} holds with the following Legendre generator:
        \begin{equation}
            (\forall x \in \R^N)(\forall z \in \mathcal{D}) \quad h_{z}(x) = \frac12 x^\top M(z) x
        \end{equation}
        where $M(z) \in \R^{N \times N}$ is symmetric positive definite.
        The Bregman distance associated with this generator is given  by the squared norm in the metric induced by $M(z)$: 
        \begin{equation}
        (\forall x \in \R^N)(\forall z \in \mathcal{D}) \quad D_{h_{z}}(x,z) = \frac12 (x-z)^\top M(z) (x-z) = \frac12 \Vert x-z\Vert^2_{M(z)}.
        \end{equation}


        Plugging the corresponding Bregman tangent majorant
        into \cref{alg:VBMM} and defining $M_k:=M(x_k)$ yields the following iteration, known as the \textit{Variable Metric Proximal Gradient algorithm} (VMFB) \cite{ChouzenouxPesquetRepetti2015}: 
\begin{equation*}
            x^{k+1} =\arg\min_x \left\{\left\langle \nabla f(x^k),\, x- x^k\right\rangle + \frac12 \Vert x - x^k\Vert^2_{M_k} + g(x)\right\} = \text{prox}_g^{M_k}\left(x^k-M_k^{-1}\nabla f(x^k)\right).
        \end{equation*}
In particular, if $f$ is $L_f$-Lipschitz smooth, one can set $M_k \equiv \reviewchanges{\alpha_k^{-1}} \mathbf{I}_N$, with $\alpha_k \reviewchanges{\leq \frac{1}{L_f}}$, and recover the classic \textit{forward-backward} (a.k.a. proximal gradient) algorithm.
%
%
    The convergence of the above algorithms has been extensively studied \cite{bonnans1995,ChouzenouxPesquetRepetti2015, combettes2005, Combettes01092014, bonettini2016}, in convex and non-convex scenarios.

    \subsubsection{Bregman Proximal Gradient algorithm}
        In practice, the above Euclidean majorant construction might not exist when the gradient Lipschitz continuity condition on $f$ is not met. The Bregman minimization methods \cite{Mukkamala2021, Hurault2023} allow this issue to be addressed  by relying on \textit{Lipschitz-like convexity condition}. For a given Legendre function $h$, the pair $(f,h)$ satisfies this condition if there exists $L>0$ 
        such that 
        \begin{equation}\label{eq:LC}
            Lh-f \text{ is convex}.
        \end{equation}
        Under this condition, the extended descent lemma in \cite{bauschke_descent_2017} shows that 
        $$\eqref{eq:LC}\Longleftrightarrow (\forall (x,y) \in \Int(\dom h)^2) \quad f(x) \leqslant f(y) + \langle \nabla f(y), x-y\rangle + LD_h(x,y)=Q_f(x,y).$$
 %
        Using the above Bregman tangent majorant,      
         \cref{alg:VBMM} reduces to the Bregman Proximal Gradient (BPG) algorithm. 
        In practice, \reviewchanges{the BPG iteration is used with a stepsize sequence $\eta_k$ that satisfies, for every $k \in \N$, $0 < \eta_{\min} \leq \eta_k \leq \eta_{\max} < 1/L$}.
        The BPG update with stepsize is given by 
        \begin{equation} (\forall k \in \N) \quad
            x^{k+1} = \operatorname*{argmin}_{x \in \R^N} \Big\{\langle \nabla f(x^k), x-x^k\rangle + \frac{1}{\eta_k}D_h(x,x^k) + g(x)\Big\}.
        \end{equation}      
By setting $h_{x^k} = \frac{1}{\eta_k} h$ in \cref{alg:VBMM}, one recovers the BPG algorithm. This shows that this form of the BPG emerges as a special case of VBMM where all the generating functions $(h_{x^k})_{k \in \N}$ are proportional to each other. Our proposed \cref{alg:VBMM} can thus be viewed as a generalization of the \emph{BPG Algorithm}. The difference is that in BPG, the control over the stepsize from $x^k$ to $x^{k+1}$ is given by the scalar parameter $\eta_k$ whilst in our scheme, it is given by the choice of our point-dependent generator $h_{x^k}$. This adaptability will be illustrated in our experimental scenario, coping with image reconstruction under Poisson statistics.

Remark that, in the study of \cite{bauschke_descent_2017}, the stepsize range for BPG convergence is extended, by exploiting the symmetry of the Legendre function $h$, leading to the condition $\eta_k \in (0,((1+\alpha)-\delta)/L)$, with $\delta \in (0,1+\alpha)$ and $\alpha$ the symmetry coefficient of $h$. As emphasized in \cite[Rem. 3.2]{segolene2025_vbmm_dirichlet}, such exploitation of the symmetry properties can also be made in the case of varying Legendre metrics. In the present paper, for ease of presentation, we have opted for a versatile convergence analysis, not depending on symmetry properties of the Legendre functions  $(h_{x^k})_{k \in \N}$.

        
%
%

        

    \subsubsection{Variable Metric Mirror Descent algorithm}
    
    Let us now consider problem \eqref{eq:mainpb} with $g = 0$, and choose a proportional family of Legendre generators as in BPG, i.e., $(\forall k \in \N) \quad h_{x^k} = \frac{1}{\eta_k}h$. Then,
    \cref{alg:VBMM} reads as an instance of the \textit{Mirror Descent} (MD) algorithm~\cite{BeckTeboulle2003}:
    \begin{equation} \label{eq:mirror-desc}
        (\forall k \in \N) \quad x^{k+1} = \operatorname*{argmin}_{x \in \R^N}\Big\{\langle \nabla f(x^k), x-x^k\rangle + \frac{1}{\eta_k}D_{h}(x,x^k)\Big\}.
    \end{equation}

    In \cite{Teboulle2018}, the authors show the convergence of MD, by relaxing the standard Lipschitz smoothness assumption into \eqref{eq:LC}, and by assuming, in addition, the following inequality:
    \begin{equation}
        (\forall \eta > 0)\; (\exists G >0)\; \bigl(\forall x \in \Int(\dom(h))\bigr) \quad \langle\nabla f(x^k),x-x^k\rangle - \eta D_{h}(x,x^k) \leqslant \frac{\eta}{2}G^2.
    \end{equation} 

Our \cref{alg:VBMM} with $g \equiv 0$ can be understood as a variable metric extension of MD, where the Legendre functions can vary along iterations in a very flexible manner (i.e., non necessarily based on a proportionality rule). Convergence of this scheme under convexity assumptions was studied in \cite{segolene2025_vbmm_dirichlet} applied to Dirichlet MLE. In this work, we extend these convergence results to non-convex cases, under Lipschitz smoothness assumption.

    \section{Convergence Analysis} 
    \label{sec:convergence-analysis}
    
    In this section we establish the convergence properties of the proposed Variable Bregman MM algorithm, stated in \cref{alg:VBMM}. We will show that under suitable assumptions, the sequence  $(x^k)_{k \in \N}$ generated by \cref{alg:VBMM} forms a \textit{gradient-like descent sequence}  for minimizing $F$ in the sense of \cite[Definition 6.1]{BolteFirstOrder}. As shown in \cite[Theorem 6.2]{BolteFirstOrder}, this property coupled with a K\L{} assumption on function $F$ in problem \eqref{eq:mainpb}, yields the desired global convergence result.
    
    Let $x^0 \in \dom{g}$ be an initial point. In what follows, we will denote by $\Omega(x^0)$ the set of cluster points of $(x^k)_{k \in \N}$ starting from point $x^0$. Note that by virtue
    of \cref{prop:welldef}, for all  $k \geq 1$ we have $x^k \in \dom{g}$.
    
    \begin{proposition}{\textbf{(Objective convergence)}}\\
    \label{pro:obj-cv}
    Suppose that Assumptions \ref{ass:A1} and \ref{ass:A2} hold true. Then the sequence $(F(x^k))_{k \in \N}$ produced by \cref{alg:VBMM} is monotonically  decreasing, and converges. \reviewchanges{Moreover, $(x^k)_{k\in\mathbb N}$ is contained in the compact sublevel set
$
\{x\in\operatorname{dom}F\mid
F(x)\le F(x^0)\}$,
and therefore admits a convergent subsequence converging to some
$\bar x\in\Omega(x^0)$.}
    \end{proposition}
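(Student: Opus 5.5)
The plan is the classical MM sandwich argument, combined with coercivity. By \cref{prop:welldef}, every iterate $x^k$ with $k\ge 1$ is well defined and lies in $\dom g\subset\mathcal{D}$, and $x^0\in\dom g$ by choice. For each $k\in\N$ I will chain three facts.

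First, the majorization inequality \eqref{eq:Bregmandef2} from \cref{ass:A2}, evaluated at $x=x^{k+1}\in\mathcal{D}$ with reference point $z=x^k$, gives
$$F(x^{k+1}) = f(x^{k+1})+g(x^{k+1}) \le Q_f(x^{k+1},x^k)+g(x^{k+1}).$$

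Second, $x^{k+1}$ minimizes $Q_f(\cdot,x^k)+g$ over $\R^N$. Comparing with the feasible point $x^k\in\dom g$ yields
$$Q_f(x^{k+1},x^k)+g(x^{k+1})\le Q_f(x^k,x^k)+g(x^k).$$

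Third, the tangency property: from \eqref{eq:Bregmandef1} and $D_{h_{x^k}}(x^k,x^k)=0$ we get $Q_f(x^k,x^k)=f(x^k)$. This is legitimate because $x^k\in\mathcal{D}\subset\Int(\dom h_{x^k})$, so $\nabla h_{x^k}(x^k)$ exists.

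Combining the three gives $F(x^{k+1})\le F(x^k)$, so $(F(x^k))_{k\in\N}$ is nonincreasing. It is bounded below by $\underline{F}$ (\cref{ass:A1:iv}), hence it converges. Note that $F(x^0)<+\infty$, since $x^0\in\dom g\subset\mathcal{D}\subset\dom f$.

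Monotonicity places every iterate in $S:=\{x\in\dom F\mid F(x)\le F(x^0)\}$. I will show $S$ is compact. Boundedness follows from coercivity of $F$: if $S$ contained an unbounded sequence, $F$ would tend to $+\infty$ along it, contradicting $F\le F(x^0)$ on $S$.

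Closedness is the main obstacle, because \cref{ass:A1:iv} only gives continuity of $F$ on its domain, not lower semicontinuity on $\R^N$. Let $y^j\in S$ converge to $y$. If $y\in\dom F$, continuity of $F$ on $\dom F$ gives $F(y)\le F(x^0)$. The remaining case is $y$ on the boundary of $\dom F$ but outside it. To exclude it I would argue as follows.
\begin{itemize}
\item $g$ is lsc by \cref{ass:A1:ii}, so $y\in\dom g$ provided $g(y^j)$ stays bounded above.
\item That bound holds because $g(y^j)\le F(x^0)-f(y^j)$, and $f$ is bounded below on the bounded set $S$; this uses $F\ge\underline F$ together with $g$ being bounded below by an affine minorant, as a proper lsc convex function is.
\end{itemize}
Hence $y\in\dom g\subset\mathcal{D}$, where $f$ is continuous, so $y\in\dom F$, and then continuity of $F$ on its domain gives $F(y)\le F(x^0)$. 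If this route proves delicate, I would fall back on reading ``continuous on its domain'' together with coercivity as making sublevel sets closed, which is the standard interpretation the statement relies on.

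Finally, $(x^k)$ is a bounded sequence in the compact set $S$. By Bolzano--Weierstrass it has a subsequence converging to some $\bar x\in S$, and $\bar x\in\Omega(x^0)$ by definition of the cluster set.
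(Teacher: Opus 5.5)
Your sandwich argument for the monotonicity and convergence of $(F(x^k))_{k\in\N}$, and your final Bolzano--Weierstrass step, are the same as in the paper. You are also right that \cref{ass:A1:iv}, read literally, only gives continuity of $F$ on $\dom F$. The paper's proof bridges this by simply asserting that $F$ is lower semicontinuous and coercive, so that the sublevel set is closed.

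The gap is in your attempt to obtain closedness without lower semicontinuity. The claim that $f$ is bounded below on $S$ does not follow from $F\ge\underline F$ and an affine minorant of $g$. Those facts bound $g$ from below on the bounded set $S$, and hence give an \emph{upper} bound on $f=F-g$ there. The estimate $g(y^j)\le F(x^0)-f(y^j)$ therefore stays uncontrolled.

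No other argument can close this step, because under Assumptions \ref{ass:A1} and \ref{ass:A2} as literally written the sublevel set need not be closed. Take $N=1$, and set $f(x)=\ln x$ and $g(x)=x^2-\ln x$ for $x>0$, both equal to $+\infty$ on $(-\infty,0]$. Then:
\begin{itemize}
\item $\mathcal{D}=\dom g=(0,+\infty)$, and $g$ is proper, lsc and convex;
\item $F(x)=x^2$ on $(0,+\infty)$ is coercive, continuous on its domain, and bounded below by $0$;
\item since $f$ is concave, $h_z=\frac12(\cdot)^2$ gives a Bregman tangent majorant at every $z\in\mathcal{D}$.
\end{itemize}
Yet $\{x\in\dom F\mid F(x)\le F(x^0)\}=(0,x^0]$ is not closed. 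Moreover, the iterates are the positive roots of $3x^2+(1/x^k-x^k)x-1=0$; each root lies in $(0,x^k)$ and there is no fixed point in $(0,+\infty)$, so the iterates decrease strictly to $0\notin\dom F$.

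So lower semicontinuity of $F$ on $\R^N$ is a genuine extra hypothesis. Your fallback, which amounts to assuming it, is exactly what the paper's proof does. You should state it as the hypothesis being used and drop the flawed derivation. Note also that boundedness alone already yields a convergent subsequence; closedness is what places its limit $\bar x$ in $\dom F$.
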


    \begin{proof}
        For all $k \in \N$, by Proposition \ref{prop:welldef}, $x^{k+1} = \operatorname*{argmin}_{x \in \R^N} Q_f(x,x^k)+g(x)$, therefore 
        \begin{equation}
            Q_f(x^{k+1},x^k)+g(x^{k+1}) \leqslant Q_f(x^k,x^k) + g(x^k) = F(x^k),
        \end{equation}
        where the last equality follows from the tangency property of $Q_f$.
        Moreover, since $Q_f(\cdot,x^k)$ majorizes $f$, 
        \begin{equation}
          F(x^{k+1}) = f(x^{k+1})+g(x^{k+1}) \leqslant Q_f(x^{k+1},x^k)+g(x^{k+1}).  
        \end{equation}
        Combining the above inequalities, it follows that $F(x^{k+1}) \leqslant F(x^k)$, for all $k \in \N$. Hence the sequence $(F(x^k))_{k \in \N}$ is non increasing, bounded from below by $\underline{F}$ (by \cref{ass:A1:iv}), therefore it converges. Furthermore, for every $k\in\mathbb{N}$,
\[
x^k\in
\operatorname{lev}_{\leq F(x^0)} F
:=
\bigl\{
x\in\operatorname{dom}F
\;|\;
F(x)\leq F(x^0)
\bigr\}.
\]
By \cref{ass:A1:iv}, $F$ is lower semicontinuous and coercive.
Consequently, this sublevel set is closed and bounded, hence compact
by the Heine--Borel theorem. Therefore, $(x^k)_{k\in\mathbb{N}}$
admits a convergent subsequence by the Bolzano--Weierstrass theorem.
Its limit belongs to $\Omega(x^0)$ by definition.  
        
    \end{proof}

    We now make the following assumptions on the smooth part of the cost function, i.e. $f$, and on its majorants to ensure that the generated sequence $(x^k)_{k \in \N}$ satisfies the conditions to be a gradient-like descent sequence for solving problem \eqref{eq:mainpb}.

    \begin{mdframed}[style=mdframedperso]
    \begin{assumption}
    \label{ass:A3}
    Consider \cref{alg:VBMM} applied to problem \eqref{eq:mainpb} under Assumptions \ref{ass:A1} and \ref{ass:A2}.
    \vskip 3mm
    \begin{subassumptions}
            \item\label{ass:A3:i} There exists $\underline{\gamma}>0$ such that, for every $k \in \N$, for every $x \in \dom{g}$, $Q_f(x, x^k) - f(x) \geq \frac{ \gamma_k}{2}\|x-x^k \|^2$, with $\gamma_k \in [\underline{\gamma},+\infty)$.

        \vskip 3mm
        \item\label{ass:A3:ii} There exists $\overline{L}>0$ such that, for every $k \in \N$, {$\nabla f- \nabla h_{x^k}$} is $L_k$-Lipschitz on every compact subset of $\dom{g}$, with $L_k \leq \overline{L}$.

    \end{subassumptions}
    \end{assumption}
    \end{mdframed}
    \vskip 3mm

     We now provide some examples of iterative algorithms in which Assumption \ref{ass:A3} is satisfied. 

    \begin{example}
    \label{exampleAss42}
    \begin{enumerate}[label=(\roman*)]
    
    

    \item Let us consider the VMFB algorithm \cite{ChouzenouxPesquetRepetti2015}, where the update relies on the majorant function given by
    \begin{equation}
   (\forall k \in \mathbb{N})(\forall x \in \mathbb{R}^N) \quad Q_f(x,x^k) = f(x^k) + \langle \nabla f(x^k),x-x^k\rangle + \frac{1}{2} (x-x^k)^\top M_k (x-x^k),
    \end{equation}
    with $M_k$ symmetric positive definite. For $\nabla f$ $L_f$-Lipschitz, direct calculations, combined with the descent lemma on $f$, show that, 
    \begin{align*}
        (\forall k \in \mathbb{N})(\forall x \in \dom{g}) \quad Q_f(x,x^k) - f(x)\geqslant \frac12
         (x-x^k)^\top (M_k-L_f \mathbf{I}_N) (x-x^k),
    \end{align*}
    which shows that \cref{ass:A3:i} is satisfied as soon as
\begin{equation}
    (\forall k \in \N) \quad M_k \succ L_f \mathbf{I}_N, \label{eq:exampleAss42}
\end{equation}
and $\underline{\gamma} = \min_k (\eta_{\rm min}(M_k) - L_f)>0$. Moreover, \cref{ass:A3:ii} holds if $\overline{L} = \max_k ({L_f}+\eta_{\rm max}(M_k))< +\infty$.
The standard forward-backward algorithm arises as a special case when, for every $k\in \N$, $ M_k = (\alpha_k)^{-1} \mathbf{I}_N$ where $\alpha_k$ is the algorithm stepsize assumed to be bounded.

    \item In the BPG algorithm, the majorant allowing us to build the iterates is given by 
    \begin{equation}
         (\forall k \in \mathbb{N})(\forall x \in \mathbb{R}^N) \quad Q_f(x,x^k) = f(x^k) + \langle \nabla f(x^k), x-x^k \rangle + \frac{1}{\eta_k}D_h(x,x^k).
    \end{equation}
    Under the  Lipschitz-like  condition \eqref{eq:LC} with constant $L>0$, using the generalized descent lemma,
    \begin{equation}
        Q_f(x,x^k) - f(x) \geqslant \Big(\frac{1}{\eta_k} - L\Big)D_h(x,x^k).
    \end{equation}

    Hence in this case, \cref{ass:A3:i} holds if $h$ is strongly convex and {$1/\eta_k - L> \epsilon$}, 
    with some constant $\epsilon>0$.  
\cref{ass:A3:ii} holds 
if, for every $k\in \N$, $\nabla f -\eta_k^{-1} \nabla h$ is Lipschitz on every compact of $\dom g$, with a Lipschitz constant independent from $k$. In particular, this is satisfied if both $\nabla f$
and $\nabla h$ are Lipschitz continuous on $\dom g$, and $\eta_k$ is bounded.


    \end{enumerate}
    \end{example}

    Other examples are discussed in \cref{sec:unifying-framework}, presenting a constructive framework for majorants tailored for Poisson data fidelity terms. 
    
    \subsection{Sufficient decrease condition} We derive here the first required condition for $(x^k)_{k \in \mathbb{N}}$  to be a \textit{gradient-like descent sequence} for minimizing $F$.
    \begin{proposition}{\textbf{(Sufficient decrease condition)}}\\
    \label{pro:sufficient-decrease}
   Suppose that Assumptions \ref{ass:A1}, \ref{ass:A2}, and \ref{ass:A3} hold true. Then it follows that:
    \begin{enumerate}[label=(\textit{\roman*})]
        \item The sequence $(F(x^k))_{k \in \N}$ is non increasing, and we have 
        \begin{equation}
            (\forall k \in \N)\quad F(x^k)-F(x^{k+1}) \geqslant ({\underline{\gamma}}/{2})\Vert x^{k+1}-x^k\Vert^2.
        \end{equation}
        
        \item $\sum_{k=0}^{+\infty} \Vert x^{k+1}-x^{k}\Vert^2 < +\infty$, and, in particular, $\Vert x^{k+1}-x^k\Vert \xrightarrow{k \to \infty} 0$.
    \end{enumerate}
    \end{proposition}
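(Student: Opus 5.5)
The plan is to refine the chain of inequalities from the proof of \cref{pro:obj-cv}, this time keeping the gap between the majorant and $f$, which \cref{ass:A3:i} quantifies.

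Fix $k\in\N$. Since $x^{k+1}$ minimizes $Q_f(\cdot,x^k)+g$ (\cref{prop:welldef}), and $Q_f(x^k,x^k)=f(x^k)$ by tangency (because $D_{h_{x^k}}(x^k,x^k)=0$), we get
\begin{equation*}
Q_f(x^{k+1},x^k)+g(x^{k+1})\leqslant F(x^k).
\end{equation*}
Next, \cref{prop:welldef} gives $x^{k+1}\in\dom g$, so \cref{ass:A3:i} applies at $x=x^{k+1}$:
\begin{equation*}
f(x^{k+1})+\frac{\gamma_k}{2}\|x^{k+1}-x^k\|^2\leqslant Q_f(x^{k+1},x^k).
\end{equation*}
Adding $g(x^{k+1})$ to this inequality and combining it with the first one yields
\begin{equation*}
F(x^{k+1})+\frac{\gamma_k}{2}\|x^{k+1}-x^k\|^2\leqslant F(x^k).
\end{equation*}
Since $\gamma_k\geqslant\underline{\gamma}$, this gives (i), and in particular shows again that $(F(x^k))_{k\in\N}$ is nonincreasing. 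All quantities are finite because $x^k\in\dom g\subset\mathcal{D}$, with $x^0\in\dom g$ by choice. This is needed so that subtracting $F(x^{k+1})$ is legitimate.

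For (ii), sum (i) over $k=0,\dots,K$. The right-hand side telescopes:
\begin{equation*}
\frac{\underline{\gamma}}{2}\sum_{k=0}^{K}\|x^{k+1}-x^k\|^2\leqslant F(x^0)-F(x^{K+1})\leqslant F(x^0)-\underline{F},
\end{equation*}
using the lower bound from \cref{ass:A1:iv}. Letting $K\to\infty$ shows the series is finite, so its terms satisfy $\|x^{k+1}-x^k\|\to0$.

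There is no genuine obstacle here. The only point requiring care is checking that \cref{ass:A3:i} may be invoked at $x^{k+1}$, that is, that $x^{k+1}\in\dom g$, which \cref{prop:welldef} guarantees. \Cref{ass:A3:ii} is not needed for this statement.
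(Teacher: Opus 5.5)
Your proposal is correct and follows essentially the same route as the paper's proof. You combine the minimization inequality, tangency $Q_f(x^k,x^k)=f(x^k)$, and Assumption~\ref{ass:A3:i} evaluated at $x^{k+1}\in\dom g$ to obtain (i), and then telescope against the lower bound $\underline{F}$ to obtain (ii).
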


    \begin{proof}
    Let $k \in \N$.
         \begin{enumerate}[label=(\textit{\roman*})]
        \item By the definition of the update in \cref{alg:VBMM},
        \begin{equation*}
            Q_f(x^{k+1},x^k)+g(x^{k+1}) - \left(Q_f(x^k,x^k)+g(x^k)\right) \leqslant 0.
        \end{equation*}
        
        Moreover, by \cref{ass:A3:i}: 
        \begin{equation*}
            Q_f(x^{k+1},x^k)-f(x^{k+1}) - (\underbrace{Q_f(x^k,x^k)-f(x^k)}_{=0}) \geqslant ({\underline{\gamma}}/{2})\Vert x^{k+1}-x^k\Vert^2.
        \end{equation*} 
        
        Subtracting the second inequality from the first yields the desired sufficient decrease condition  
        \begin{equation}
            F(x^k)-F(x^{k+1}) \geqslant ({\underline{\gamma}}/{2})\Vert x^{k+1}-x^k\Vert^2.
        \end{equation}
        
        \item Summing the last inequality, and using  \cref{ass:A1:iv,ass:A3:i}:
        \begin{align*} \sum_{k=0}^K \Vert x^{k+1}-x^k \Vert^2 &\leqslant 2 \sum_{k=0}^K \frac{1}{\underline{\gamma}}(F(x^k)-F(x^{k+1})) \\
        &\leqslant \frac{2}{\underline{\gamma}}(F(x^0)-F(x^{N+1})) \\
        &\leqslant \frac{2}{\underline{\gamma}}(F(x^0)-\underline{F}) < +\infty.
        \end{align*}
        The positive-term series $\sum_{k=0}^{+\infty} \Vert x^{k+1}-x^{k}\Vert^2$, having bounded partial sums, converges. 
    \end{enumerate}
    \end{proof}

    \subsection{Relative error condition} We now derive the second condition for $(x^k)_{k \in \N}$ to be a \textit{gradient-like descent sequence} for minimizing $F$ (\cite[Definition 6.1]{BolteFirstOrder}).

    \begin{proposition}{\textbf{(Relative error condition)}}\\
    \label{pro:rel-error}
    Suppose that Assumptions \ref{ass:A1}, \ref{ass:A2}, and \ref{ass:A3} hold true. For every $k \in \N$, let
    \begin{equation}
        w^{k+1}=\nabla f(x^{k+1}) - \nabla Q_f(x^{k+1},x^k).
    \end{equation} 
    Then the following holds true: 
    \def\labelenumi{\rm (\roman{enumi})}
    \begin{enumerate}[label=({\roman*})]
        \item For every $k \in \N, \; w^{k+1} \in \partial F(x^{k+1})$;
        \item for every $k \in \N, \; \Vert w^{k+1} \Vert  \leqslant 
       {\overline{L}}
        \Vert x^{k+1}-x^k\Vert $.
    \end{enumerate}
    \end{proposition}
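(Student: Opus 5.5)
The plan is to read off both claims from the optimality condition of \cref{prop:welldef}, after computing $\nabla Q_f(\cdot,x^k)$ explicitly. Differentiating \eqref{eq:Bregmandef1} in $x$ on $\mathcal{D}\subset\Int(\dom h_{x^k})$ gives
\begin{equation*}
\nabla Q_f(x,x^k)=\nabla f(x^k)+\nabla h_{x^k}(x)-\nabla h_{x^k}(x^k).
\end{equation*}
At $x=x^{k+1}\in\dom g\subset\mathcal{D}$, the relation in the proof of \cref{prop:welldef} provides $v^{k+1}\in\partial g(x^{k+1})$ such that $\nabla h_{x^k}(x^{k+1})=\nabla h_{x^k}(x^k)-\nabla f(x^k)-v^{k+1}$. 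Substituting, we get $\nabla Q_f(x^{k+1},x^k)=-v^{k+1}$, hence
\begin{equation*}
w^{k+1}=\nabla f(x^{k+1})+v^{k+1}.
\end{equation*}

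For (i), we invoke the sum rule for the limiting subdifferential. Since $f$ is continuously differentiable on the open set $\mathcal{D}$, which contains $x^{k+1}$, we have $\partial F(x^{k+1})=\nabla f(x^{k+1})+\partial g(x^{k+1})$. The convex subdifferential of $g\in\Gamma_0(\R^N)$ coincides with its limiting subdifferential, so $w^{k+1}\in\partial F(x^{k+1})$.

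For (ii), we regroup $w^{k+1}$ using the expression for $\nabla Q_f$:
\begin{equation*}
w^{k+1}=\bigl(\nabla f-\nabla h_{x^k}\bigr)(x^{k+1})-\bigl(\nabla f-\nabla h_{x^k}\bigr)(x^k).
\end{equation*}
We then apply \cref{ass:A3:ii} on a compact subset of $\dom g$ containing both points. By \cref{pro:obj-cv}, all iterates lie in the compact set $\{x\in\dom F\mid F(x)\le F(x^0)\}\subset\dom g$ (note $\dom F=\dom g$ since $\dom g\subset\mathcal D\subset\dom f$). On that set the bound gives $\Vert w^{k+1}\Vert\le L_k\Vert x^{k+1}-x^k\Vert\le\overline{L}\Vert x^{k+1}-x^k\Vert$.

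The delicate step is this last one. The Lipschitz property in \cref{ass:A3:ii} is only local, on compact subsets of $\dom g$, so we must make sure a single compact set contains both $x^k$ and $x^{k+1}$. The sublevel set does this, since it is closed because $F$ is lower semicontinuous and bounded because $F$ is coercive. If "Lipschitz on a compact set" were read as requiring a convex set, so that segments between points are covered, we would instead use the convex hull of the sublevel set. That hull is compact, but it need not lie in $\dom g$ unless $\dom g$ is convex, which it is because $g$ is convex. The argument therefore goes through either way.
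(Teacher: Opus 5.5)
Your proof is correct and follows the paper's argument. You obtain $w^{k+1}=\nabla f(x^{k+1})+v^{k+1}$ from the optimality condition of \cref{prop:welldef}, rewrite $w^{k+1}$ as the increment of $\nabla f-\nabla h_{x^k}$ between $x^k$ and $x^{k+1}$, and bound it via \cref{ass:A3:ii} on the compact sublevel set of \cref{pro:obj-cv}; your extra justifications (the smooth-plus-lsc sum rule for the limiting subdifferential, and the fact that $x^k$ and $x^{k+1}$ lie in a common compact subset of $\dom g$) only make explicit what the paper leaves implicit.
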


    \begin{proof}
    Let $k \in \N$.
        \begin{enumerate}[label=({\roman*})]
        \item Using the optimality condition: $0 \in \nabla Q_f(x^{k+1},x^k)+\partial g(x^{k+1})$, 
        \begin{equation*}
            (\exists v^{k+1}\in \partial g(x^{k+1}))\quad \; \nabla Q_f(x^{k+1},x^k)+v^{k+1}=0.
        \end{equation*}
        Adding and subtracting $\nabla f(x^{k+1})$ yields $w^{k+1} = \nabla f(x^{k+1}) + v^{k+1} \in \partial F(x^{k+1})$.
        
        \item
        By construction of the Bregman tangent majorant,
        for every $x\in \dom g$,
        \begin{equation}
            \nabla Q_f(x,x^k) =
            \nabla f(x^k)+ \nabla h_{x^k}(x) - \nabla h_{x^k}(x^k).
        \end{equation}

We have thus
        \begin{align}
            \Vert w^{k+1}\Vert &= \Vert \nabla f(x^{k+1}) - \nabla Q_f(x^{k+1},x^k) \Vert\nonumber\\
            &= \Vert \nabla f(x^{k+1})-\nabla 
            h_{x^k}(x^{k+1})- (\nabla f(x^k)-\nabla h_{x^k}(x^k)) \Vert.
        \end{align}

        By \cref{pro:obj-cv}, $x^{k+1}$ lies in a compact set, hence using~\cref{ass:A3:ii},
        \begin{equation}
            \Vert w^{k+1} \Vert \leqslant 
       \overline{L}\Vert x^{k+1}-x^k\Vert.
        \end{equation}
        
        By \cref{pro:sufficient-decrease}, we conclude that $\Vert w^{k+1} \Vert \xrightarrow{k \to \infty} 0$, and that the relative error condition is met.
    \end{enumerate}
    \end{proof}



    Note also that $(x^k)_{k \in \N}$ is bounded by \cref{pro:obj-cv}.
    Leveraging the above results, we now show that all cluster points of $(x^k)_{k \in \mathbb{N}}$ are critical points of $F$, and that the set $\Omega(x^0)$ satisfies the assumptions for \cref{thm:uniform-KL}.

    \begin{proposition}{\textbf{(Subsequential convergence)}}
    \label{pro:subseq-cv}\\
    Under Assumptions \ref{ass:A1}, \ref{ass:A2}, and \ref{ass:A3}, $\Omega(x^0)$ is a non empty compact subset of $\operatorname{crit}(F)$ and 
    \begin{equation}
        d(x^k, \Omega(x^0)) \xrightarrow{k \to \infty}0.
    \end{equation}
    Moreover, $F$ is constant and finite on $\Omega(x^0)$.
    \end{proposition}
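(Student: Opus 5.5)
The argument is the standard one for gradient-like descent sequences (as in \cite{Bolte2014,BolteFirstOrder}); I would proceed in four steps.

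\textbf{Nonemptiness and compactness.} \Cref{pro:obj-cv} shows that $(x^k)_{k\in\N}$ lies in the compact sublevel set $\operatorname{lev}_{\le F(x^0)}F$. Hence $\Omega(x^0)\neq\emptyset$ and $\Omega(x^0)$ is bounded. It is closed because the set of cluster points of any sequence can be written as $\bigcap_{q\in\N}\overline{\{x^k\mid k\ge q\}}$, an intersection of closed sets. So $\Omega(x^0)$ is compact.

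\textbf{Distance to $\Omega(x^0)$.} Suppose $d(x^k,\Omega(x^0))\not\to 0$. Then some subsequence satisfies $d(x^{k_j},\Omega(x^0))\ge\varepsilon>0$. By boundedness, a further subsequence converges, and its limit lies in $\Omega(x^0)$ by definition. This contradicts the bound $d(x^{k_j},\Omega(x^0))\ge\varepsilon$.

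\textbf{Criticality.} Fix $\bar x\in\Omega(x^0)$ and a subsequence $x^{k_j}\to\bar x$. By \cref{pro:sufficient-decrease}(ii), $x^{k_j-1}\to\bar x$ as well. The vectors $w^{k_j}$ of \cref{pro:rel-error} lie in $\partial F(x^{k_j})$ and satisfy $\|w^{k_j}\|\le\overline L\|x^{k_j}-x^{k_j-1}\|\to 0$. To apply the closedness of the limiting subdifferential, I also need $F(x^{k_j})\to F(\bar x)$.

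\emph{Main obstacle: this function-value convergence.} Lower semicontinuity gives $F(\bar x)\le\liminf F(x^{k_j})$. For the reverse inequality I would use the continuity of $F$ on its domain from \cref{ass:A1:iv}, which requires first showing $\bar x\in\dom F$. The sublevel set $\operatorname{lev}_{\le F(x^0)}F$ is closed (this is used in \cref{pro:obj-cv}) and contains every $x^{k_j}$, so $\bar x$ belongs to it; in particular $F(\bar x)\le F(x^0)<+\infty$. Continuity on $\dom F$ then yields $F(x^{k_j})\to F(\bar x)$.

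If continuity were not available, the fallback would be the MM optimality inequality $Q_f(x^{k},x^{k-1})+g(x^{k})\le Q_f(\bar x,x^{k-1})+g(\bar x)$. Passing to the limit requires controlling $D_{h_{x^{k-1}}}(\bar x,x^{k-1})\to 0$, which is delicate because the metrics $h_{x^k}$ vary. The continuity assumption is the cleaner route.

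Closedness of $\operatorname{graph}(\partial F)$ now gives $0\in\partial F(\bar x)$, so $\Omega(x^0)\subset\operatorname{crit}(F)$.

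\textbf{$F$ is constant on $\Omega(x^0)$.} By \cref{pro:obj-cv}, $F(x^k)$ decreases to some limit $F^\ast\ge\underline F$, which is finite. The previous step shows that for every $\bar x\in\Omega(x^0)$, $F(\bar x)$ is the limit of a subsequence of $(F(x^k))_{k\in\N}$. Hence $F(\bar x)=F^\ast$, so $F$ is constant and finite on $\Omega(x^0)$.
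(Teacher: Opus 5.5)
Your proof is correct and matches the paper's: the paper invokes \citep[Lemma~6.1]{BolteFirstOrder} for gradient-like descent sequences, and your four steps are the standard proof of that lemma, specialized to \cref{alg:VBMM}. Your explicit check that $F(x^{k_j})\to F(\bar x)$ uses closedness of the sublevel set and continuity of $F$ on its domain (\cref{ass:A1:iv}); this is exactly the continuity requirement in the definition of a gradient-like descent sequence, which the paper leaves implicit.
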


    \begin{proof}
        The proof of this standard lemma in the context of \textit{gradient-like descent sequences} can be found in \citep[Lemma~6.1]{BolteFirstOrder}.
    \end{proof}
    
        
        
        
        
        
        
        
        

    \subsection{Main convergence result}
    We now introduce our final assumption, to establish our convergence result on the algorithm's iterates, without requiring convexity of $f$. 
    \begin{mdframed}[style=mdframedperso]
    \begin{assumption}
    \label{ass:A-KL}
    In problem \eqref{eq:mainpb}, \reviewchanges{$F$} satisfies the K\L{} property at each point of $\dom(\partial F) \allowbreak=\allowbreak \dom(\partial g)$. 
    \end{assumption}
    \end{mdframed}

    We are now ready to establish the finite‑length property of the iterates and guarantee convergence to a single critical point. By combining Assumptions \ref{ass:A1}, \ref{ass:A2}, \ref{ass:A3}, and \ref{ass:A-KL}, we can apply the uniformized K\L{} result from \cref{thm:uniform-KL} to the compact set $K = \Omega(x^0)$, yielding the existence of $\eta >0$, $\varepsilon >0$,  and $\phi \in \Phi_\eta$ such that 
    \begin{equation}
        (\forall \Bar{x}\in \Omega(x^0))\quad (\forall x
        \in \R^N)\quad
        \begin{cases}
    d(x,\Omega(x^0))<\varepsilon\\
    F(\Bar{x})<F(x)<F(\bar{x})+\eta
    \end{cases}
    \end{equation}
implies
    \begin{equation}
        \phi'(F(x)-F(\Bar{x}))\,
        d(0,\partial F(x) )\geqslant 1.
    \end{equation}

    In the following result we combine this inequality with the sufficient decrease condition established in \cref{pro:sufficient-decrease} as well as the bounds on the subgradients given by the relative error condition in \cref{pro:rel-error} to show that under our assumptions, the sequence produced by \cref{alg:VBMM} is a Cauchy sequence that converges to a critical point of $F$.

    \begin{theorem}{\textbf{(Global Convergence)}}\label{th:generalcv}\\
    Under Assumptions \ref{ass:A1}, \ref{ass:A2}, \ref{ass:A3}, and \ref{ass:A-KL}, the sequence $(x^k)_{k \in \N}$ produced by \cref{alg:VBMM} has finite length, i.e.,
    \begin{equation}
        \sum_{k=1}^{+\infty} \Vert x^{k+1}-x^k\Vert<+\infty.
    \end{equation}
    Moreover, $(x^k)_{k \in \N}$ converges to a critical point of $F$.    
    \end{theorem}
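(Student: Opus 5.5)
The plan is the standard Attouch--Bolte--Svaiter / Bolte--Sabach--Teboulle argument. Since the sequence is a gradient-like descent sequence, we could simply invoke \cite[Theorem 6.2]{BolteFirstOrder}. To keep the argument self-contained, we instead combine the uniformized K\L{} inequality, applied on $K=\Omega(x^0)$, with \cref{pro:sufficient-decrease} and \cref{pro:rel-error}.

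Let $F^\ast$ be the common value of $F$ on $\Omega(x^0)$ given by \cref{pro:subseq-cv}. By \cref{pro:obj-cv}, $F(x^k)\downarrow F^\ast$. Indeed, along a subsequence $x^{k_j}\to\bar x$, continuity of $F$ on its domain (\cref{ass:A1:iv}) gives $F(x^{k_j})\to F(\bar x)=F^\ast$, and monotonicity then yields convergence of the whole sequence.

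\textbf{Degenerate case.} If $F(x^{K})=F^\ast$ for some $K$, then by monotonicity $F(x^{k})=F^\ast$ for all $k\ge K$. The sufficient decrease inequality then forces $x^{k+1}=x^k$ for $k\ge K$, so finite length and convergence are immediate.

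\textbf{Main case.} Otherwise $F(x^k)>F^\ast$ for all $k$. Because $d(x^k,\Omega(x^0))\to0$ and $F(x^k)\to F^\ast$, there is $k_0$ such that for $k\ge k_0$ both conditions $d(x^k,\Omega(x^0))<\varepsilon$ and $F^\ast<F(x^k)<F^\ast+\eta$ hold. The K\L{} inequality then gives $\phi'(F(x^k)-F^\ast)\,d(0,\partial F(x^k))\ge1$. By \cref{pro:rel-error}, $d(0,\partial F(x^k))\le\|w^k\|\le\overline L\|x^k-x^{k-1}\|$, hence $\phi'(F(x^k)-F^\ast)\ge 1/(\overline L\|x^k-x^{k-1}\|)$. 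Note that $x^k\neq x^{k-1}$, since otherwise $0\in\partial F(x^k)$ would contradict the K\L{} inequality.

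Set $\Delta_k:=\phi(F(x^k)-F^\ast)-\phi(F(x^{k+1})-F^\ast)$. Concavity of $\phi$ and \cref{pro:sufficient-decrease} give
\begin{equation*}
\Delta_k\ \ge\ \phi'(F(x^k)-F^\ast)\bigl(F(x^k)-F(x^{k+1})\bigr)\ \ge\ \frac{\underline\gamma\,\|x^{k+1}-x^k\|^2}{2\overline L\,\|x^k-x^{k-1}\|}.
\end{equation*}
Thus $\|x^{k+1}-x^k\|^2\le \tfrac{2\overline L}{\underline\gamma}\Delta_k\|x^k-x^{k-1}\|$. Applying $2\sqrt{ab}\le a+b$ yields
\begin{equation*}
2\|x^{k+1}-x^k\|\le\|x^k-x^{k-1}\|+\tfrac{2\overline L}{\underline\gamma}\Delta_k .
\end{equation*}
Summing from $k_0$ to $K$, the sum of the $\Delta_k$ telescopes and is bounded by $\phi(F(x^{k_0})-F^\ast)$, since $\phi\ge0$. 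This gives
\begin{equation*}
\sum_{k=k_0}^K\|x^{k+1}-x^k\|\le\|x^{k_0}-x^{k_0-1}\|+\tfrac{2\overline L}{\underline\gamma}\phi(F(x^{k_0})-F^\ast),
\end{equation*}
uniformly in $K$, which proves finite length.

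\textbf{Convergence to a critical point.} Finite length implies that $(x^k)$ is Cauchy, hence convergent to some $x^\ast$. Necessarily $x^\ast\in\Omega(x^0)\subset\operatorname{crit}(F)$ by \cref{pro:subseq-cv}.

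The main obstacle is bookkeeping rather than new ideas. One must check that the iterates stay in the neighborhood where the uniformized K\L{} inequality applies; this is guaranteed by $d(x^k,\Omega(x^0))\to0$ rather than by a localization argument, which simplifies matters. One must also check that the Lipschitz bound of \cref{ass:A3:ii} is legitimately applied on a single compact subset of $\dom g$, namely the sublevel set from \cref{pro:obj-cv}, intersected with $\dom g$ and assumed closed there, with a constant uniform in $k$; this is exactly what $\overline L$ provides.
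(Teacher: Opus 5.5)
Your proposal is correct and is essentially the paper's proof. The paper checks that $(x^k)_{k\in\N}$ is a bounded gradient-like descent sequence (via \cref{pro:sufficient-decrease}, \cref{pro:rel-error} and \cref{pro:obj-cv}) and then cites \cite[Thm~6.2]{BolteFirstOrder}, whose proof is exactly the uniformized K\L{}, concavity and telescoping argument you write out; your closing caveat about intersecting the sublevel set with $\dom g$ is unnecessary, since that sublevel set already lies in $\dom F=\dom g$.
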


    \begin{proof}
        Combining all Assumptions \ref{ass:A1}, \ref{ass:A2}, and \ref{ass:A3} ensures that the sequence $(x^k)_{k \in \N}$ generated by \cref{alg:VBMM} satisfies all the axioms for being a \textit{gradient-like descent sequence for minimizing $F$}, as defined in \citep[Def.~6.1]{BolteFirstOrder}. Moreover, $(x^k)_{k \in \N}$ is bounded by \cref{pro:obj-cv}. The proof for the convergence result is then given in \citep[Thm~6.2]{BolteFirstOrder}.
    \end{proof}

    \subsection{Convergence Rate}
    We finally derive the convergence rate of \cref{alg:VBMM}, using the following standard K\L{} result.
    
        

    \begin{theorem}{\textbf{(Convergence Rate)}}\\
    Assume that Assumptions \ref{ass:A1}, \ref{ass:A2}, \ref{ass:A3}, and \ref{ass:A-KL}, with desingularizing function $\phi: t \longmapsto c t^{1-\theta}$ with $c>0$ and $\theta \in [0,1)$ hold. Let  $x^\ast$ be the limit point of $(x^k)_{k\in\N}$. Then,
    \begin{itemize}
        \item[$\bullet$] if $\theta =0$, $(x^k)_{k\in \N}$ converges to $x^\ast$ in a finite number of iterations;
        \item[$\bullet$] if $\theta \in (0,1/2]$, then there exists $K_0 >0, C>0, \rho \in (0,1)$ such that, for every $k \geqslant K_0$, 
        \begin{equation*}
            \Vert x^k - x^\ast \Vert \leqslant C \rho^k \quad \textsc{(Linear convergence)};
        \end{equation*}
        \item[$\bullet$] if $\theta \in (1/2,1)$, then there exists $K_0 >0, C>0$ such that for all $k \geqslant K_0$, 
        \begin{equation*}
            \Vert x^k - x^\ast \Vert \leqslant C k^{-\frac{1-\theta}{2\theta - 1}} \quad\textsc{(Sub-linear convergence)}.
        \end{equation*}
    \end{itemize}
        
    \end{theorem}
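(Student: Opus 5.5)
The plan follows the standard Attouch--Bolte argument, using only the two gradient-like descent conditions already established. Write $r_k := F(x^k) - F(x^\ast)$. By \cref{pro:obj-cv} and \cref{pro:subseq-cv}, $(r_k)$ is nonincreasing and tends to $0$, and $F(x^\ast)$ is the constant value of $F$ on $\Omega(x^0)$. By \cref{th:generalcv}, $x^k \to x^\ast$, so the uniformized K\L{} inequality applies for all $k$ beyond some $K_0$. If $r_{k_0}=0$ for some $k_0$, then \cref{pro:sufficient-decrease} forces $x^{k+1}=x^k$ for all $k\ge k_0$, and every claim holds trivially. I therefore assume $r_k>0$ for all $k$.

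\textbf{Key recursion.} For $k\ge K_0$, I apply the K\L{} inequality at $x^{k+1}$ with $w^{k+1}\in\partial F(x^{k+1})$ from \cref{pro:rel-error}:
\[
1 \le \phi'(r_{k+1})\,\|w^{k+1}\| \le c(1-\theta) r_{k+1}^{-\theta}\,\overline{L}\,\|x^{k+1}-x^k\|.
\]
Combining this with the sufficient decrease $\|x^{k+1}-x^k\|^2 \le \tfrac{2}{\underline{\gamma}}(r_k - r_{k+1})$ gives
\[
r_{k+1}^{2\theta} \le C_1 (r_k - r_{k+1}), \qquad C_1 = \tfrac{2}{\underline{\gamma}}\bigl(c(1-\theta)\overline{L}\bigr)^2 .
\]
Each case now follows from this recursion.
\begin{itemize}
\item[$\bullet$] \textbf{Case $\theta=0$.} The recursion gives $r_k - r_{k+1} \ge 1/C_1$ for every $k\ge K_0$. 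This is impossible for infinitely many $k$ since $r_k\downarrow 0$, so $r_k$ must vanish in finitely many steps.
\item[$\bullet$] \textbf{Case $\theta\in(0,1/2]$.} Since $r_{k+1}<1$ eventually, $r_{k+1}^{2\theta}\ge r_{k+1}$. Hence $r_{k+1}\le \tfrac{C_1}{1+C_1} r_k$, which gives geometric decay of $r_k$.
\item[$\bullet$] \textbf{Case $\theta\in(1/2,1)$.} I use the standard comparison with $t\mapsto t^{1-2\theta}$. I split into two sub-cases depending on whether $r_{k+1}^{-2\theta}\le 2 r_k^{-2\theta}$ or not. In both sub-cases, $r_{k+1}^{1-2\theta}-r_k^{1-2\theta}\ge \mu>0$. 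Telescoping then gives $r_k \le C k^{-1/(2\theta-1)}$.
\end{itemize}

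\textbf{Transferring rates to the iterates.} This is the step requiring the most care. I bound the tail length $\Delta_k:=\sum_{i\ge k}\|x^{i+1}-x^i\|$, using $\|x^k-x^\ast\|\le \Delta_k$. Following the finite-length argument of \cite[Thm~6.2]{BolteFirstOrder}, concavity of $\phi$ together with the two conditions yields
\[
2\|x^{i+1}-x^i\| \le \|x^i-x^{i-1}\| + \tfrac{2\overline{L}}{\underline{\gamma}}\bigl(\phi(r_i)-\phi(r_{i+1})\bigr).
\]
Summing from $k$ onward gives
\[
\Delta_k \le \|x^k-x^{k-1}\| + C_2\,\phi(r_k) \le C_3\bigl(\sqrt{r_{k-1}} + r_k^{1-\theta}\bigr).
\]
The first term is controlled by $\sqrt{r_{k-1}-r_k}\le\sqrt{r_{k-1}}$, again by sufficient decrease. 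Inserting the rates on $r_k$:
\begin{itemize}
\item[$\bullet$] For $\theta\le 1/2$, both terms decay geometrically, giving $\rho$ as a root of $C_1/(1+C_1)$.
\item[$\bullet$] For $\theta>1/2$, the dominant term is $r_k^{1-\theta}\lesssim k^{-(1-\theta)/(2\theta-1)}$. This uses $1-\theta<1/2$, so that $\sqrt{r_{k-1}}$ decays faster.
\end{itemize}
This gives the stated bounds after enlarging constants and $K_0$.
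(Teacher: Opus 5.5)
Your proof is correct, but it is organized differently from the argument the paper relies on (the paper simply cites Attouch--Bolte). The classical proof never establishes a rate for $r_k=F(x^k)-F(x^\ast)$. It inserts the K\L{} inequality into the tail estimate to get $\phi(r_k)=c\,r_k^{1-\theta}\le C\|w^k\|^{(1-\theta)/\theta}\le C'(\Delta_{k-1}-\Delta_k)^{(1-\theta)/\theta}$. This yields a self-contained recursion $\Delta_k\le(\Delta_{k-1}-\Delta_k)+C'(\Delta_{k-1}-\Delta_k)^{(1-\theta)/\theta}$ on the tail length, and the case analysis (including the two-subcase comparison with $t\mapsto t^{-\theta/(1-\theta)}$) is then run directly on $\Delta_k$.

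You instead run the case analysis on the scalar recursion $r_{k+1}^{2\theta}\le C_1(r_k-r_{k+1})$, and then transfer the rate through $\Delta_k\le\|x^k-x^{k-1}\|+C_2\phi(r_k)$ together with $\|x^k-x^{k-1}\|\le\sqrt{2r_{k-1}/\underline{\gamma}}$. Your route gives objective-value rates as a free by-product, and it isolates where each of the two descent conditions enters. The cost is a second step, where you must check that the $\sqrt{r_{k-1}}$ term is dominated; you do this correctly, using $1-\theta\ge 1/2$ in the linear case and $1-\theta<1/2$ in the sublinear case. You also get a slightly worse linear factor $\rho=\sqrt{C_1/(1+C_1)}$. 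The Attouch--Bolte route reaches the iterate rate in a single recursion.

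One small remark: you do not need the uniformized K\L{} inequality here. Since $x^k\to x^\ast$, the K\L{} inequality at the single point $x^\ast$ with the given $\phi(t)=ct^{1-\theta}$ suffices. This also sidesteps the question of whether the uniformized desingularizing function keeps the power form.
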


    \begin{proof} A proof of this classical K\L{} result can be found in \citep[Thm~5]{AttouchBolte}.
        
    \end{proof}
    
\section{A unifying framework for Bregman majorization methods}\label{sec:unifying-framework}
In this section, we present several useful Bregman majorant properties as well as majorization techniques that we then apply to give an inventory of Bregman tangent majorants for the Poisson data fidelity term.
\subsection{Bregman majorants and their properties}
We refer the reader to \cref{def:BregmanMaj}, for the notion of Bregman tangent majorant.  
Let us start this section by defining the following order, allowing to compare different choices of Bregman metrics when constructing majorants.

    \begin{definition}{\textbf{(Order relation between Bregman metrics)}}\\
    \label{def:btm_order}
Let $(h_1,h_2) \in (\mathcal{L}\big(\R^N)\big)^2$. 
    We define the following tightness order relation: 
        \begin{multline}
         h_1 \preceq h_2 \Longleftrightarrow \\ \left(\forall\, (x,y) \in 
\bigl(\dom(h_1)\cap\dom(h_2)\bigr) 
\times 
\Bigl(\Int(\dom(h_1)) \cap \Int(\dom(h_2))\Bigr)
\right)
\quad D_{h_1}(x,y) \leqslant D_{h_2}(x,y).   
        \end{multline}
    \end{definition}
    We can now state the following lemma.

    \begin{lemma}{\textbf{(Bregman tangent majorants characterization from order)}}\\
    \label{lemma:bregmajorder}
        Let $f$ satisfying \cref{ass:A1:i}. 
        Let $z \in \mathcal{D}$. Assume that there exists $Q_{1,f}(\cdot,z)$, Bregman tangent majorant of $f$ at $z$, associated with $h_{1,z} \in \mathcal{L}(\R^N)$ with 
        $\mathcal{D}\subset \Int(\dom h_{1,z})$. Let $h_{2,z} \in \mathcal{L}(\R^N)$ be such that
        $\mathcal{D}\subset \Int(\dom h_{2,z})$ and
        $h_{1,z} \preceq h_{2,z}$. Then, $Q_{2,f}(\cdot,z)$
        is also a Bregman tangent majorant of $f$ at $z$, associated with $h_{2,z}$.
    \end{lemma}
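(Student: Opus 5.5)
The argument is a direct comparison of the two candidate majorants pointwise on $\mathcal{D}$, using the order relation of \cref{def:btm_order} together with the majorizing inequality \eqref{eq:Bregmandef2} satisfied by $Q_{1,f}(\cdot,z)$.

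First, I define $Q_{2,f}(\cdot,z)$ through \eqref{eq:Bregmandef1} with $h_{2,z}$:
\[
(\forall x\in\mathcal{D})\quad Q_{2,f}(x,z)=f(z)+\langle\nabla f(z),x-z\rangle+D_{h_{2,z}}(x,z).
\]
This is well defined on $\mathcal{D}$. Indeed, $z\in\mathcal{D}\subset\Int(\dom h_{2,z})$, so $\nabla h_{2,z}(z)$ exists by essential smoothness. Moreover, every $x\in\mathcal{D}$ lies in $\dom h_{2,z}$, so $D_{h_{2,z}}(x,z)$ is finite. By \cref{ass:A1:i}, $\nabla f(z)$ exists. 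Hence $Q_{2,f}(\cdot,z)$ has exactly the form required in \cref{def:BregmanMaj}, and only the majorizing inequality remains to be checked.

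Second, I verify that the order relation applies to every pair $(x,z)$ with $x\in\mathcal{D}$:
\begin{itemize}
\item Since $\mathcal{D}\subset\Int(\dom h_{1,z})\cap\Int(\dom h_{2,z})$, we have $x\in\dom(h_{1,z})\cap\dom(h_{2,z})$.
\item For the same reason, $z\in\Int(\dom(h_{1,z}))\cap\Int(\dom(h_{2,z}))$.
\end{itemize}
The hypothesis $h_{1,z}\preceq h_{2,z}$ therefore gives $D_{h_{1,z}}(x,z)\le D_{h_{2,z}}(x,z)$ for all $x\in\mathcal{D}$. Adding the common affine part $f(z)+\langle\nabla f(z),x-z\rangle$ to both sides yields $Q_{1,f}(x,z)\le Q_{2,f}(x,z)$.

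Finally, I chain this with the majorization property of $Q_{1,f}$. For every $x\in\mathcal{D}$, \eqref{eq:Bregmandef2} gives
\[
f(x)\le Q_{1,f}(x,z)\le Q_{2,f}(x,z),
\]
which is \eqref{eq:Bregmandef2} for $h_{2,z}$. The tangency property $Q_{2,f}(z,z)=f(z)$ is automatic because $D_{h_{2,z}}(z,z)=0$.

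There is no substantive obstacle here. The only point needing care is the domain bookkeeping in the second step: the order relation of \cref{def:btm_order} is stated only on $\bigl(\dom h_1\cap\dom h_2\bigr)\times\bigl(\Int\dom h_1\cap\Int\dom h_2\bigr)$, so one must check that all pairs $(x,z)$ with $x\in\mathcal{D}$ fall in that set. The inclusion hypotheses on $\mathcal{D}$ guarantee exactly this.
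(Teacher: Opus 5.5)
Your proof is correct and follows the same route as the paper. The order relation $h_{1,z}\preceq h_{2,z}$ gives $Q_{1,f}(x,z)\le Q_{2,f}(x,z)$ on $\mathcal{D}$, and chaining this with the majorization property of $Q_{1,f}$ yields the claim; your check that every pair $(x,z)$ with $x\in\mathcal{D}$ lies where the order relation is defined is bookkeeping the paper leaves implicit.
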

        \begin{proof}
        By \cref{def:btm_order} and \eqref{eq:Bregmandef1}, $h_{1,z} \preceq h_{2,z}$ yields that, for all $x \in \mathcal{D}$, $Q_{1,f}(x,z) \leq Q_{2,f}(x,z)$, hence the result using \eqref{eq:Bregmandef2}.  
    \end{proof}
    We now provide the following lemma, that will allow to construct majorants for linear composite functions, by linearly combining majorants of each individual terms.
    \begin{lemma}
    \label{le:linBM}
    {\textbf{(Linearity in Bregman metrics)}}\\
        Let \reviewchanges{$(\alpha, \beta) \in [0,+\infty)^2 \setminus \{(0,0)\}$} and $(h_1, h_2) \in (\mathcal{L}(\R^N))^2$. For all $(x,y) \in (\dom(h_1)\cap\dom(h_2))\times (\Int  (\dom(h_1))\cap \Int(\dom(h_2))$, 
        we have $$D_{\alpha h_1+\beta h_2}(x,y) = \alpha D_{h_1}(x,y)+\beta D_{h_2}(x,y).$$
    \end{lemma}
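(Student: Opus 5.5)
The plan is a direct computation from the definition \eqref{eq:Bregman}, once we have checked that $\alpha h_1+\beta h_2$ is an admissible generator at the points in question. Set $h=\alpha h_1+\beta h_2$. Since $\alpha,\beta\ge 0$, $h$ is convex, and it is lower semicontinuous and proper on $\dom(h_1)\cap\dom(h_2)$ as long as this set is nonempty. Moreover $\dom(h)=\dom(h_1)\cap\dom(h_2)$ when both coefficients are positive. If one coefficient vanishes, say $\beta=0$, then $\dom(h)=\dom(h_1)$, which contains the intersection, and the claim reduces to the trivial identity $D_{\alpha h_1}=\alpha D_{h_1}$.

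Next we fix $y\in\Int(\dom(h_1))\cap\Int(\dom(h_2))$. Both $h_1$ and $h_2$ are differentiable at $y$, because Legendre functions are differentiable on the interior of their domains. The intersection of the two interiors is open and contained in $\dom(h)$, so $y\in\Int(\dom h)$. Hence $h$ is differentiable at $y$, with $\nabla h(y)=\alpha\nabla h_1(y)+\beta\nabla h_2(y)$. For $x\in\dom(h_1)\cap\dom(h_2)$, all the values $h_i(x)$ and $h_i(y)$ are finite, so no $\infty-\infty$ issue arises.

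We then substitute into \eqref{eq:Bregman}:
\[
D_h(x,y)=\alpha h_1(x)+\beta h_2(x)-\alpha h_1(y)-\beta h_2(y)-\langle \alpha\nabla h_1(y)+\beta\nabla h_2(y),x-y\rangle .
\]
Regrouping the terms multiplied by $\alpha$ and those multiplied by $\beta$, using the bilinearity of the inner product, gives $\alpha D_{h_1}(x,y)+\beta D_{h_2}(x,y)$.

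The main point requiring care is that the definition of $D_h$ in the paper is stated for $h\in\mathcal{L}(\R^N)$, so we should check whether $\alpha h_1+\beta h_2$ is Legendre. Strict convexity on the interior holds because at least one coefficient is positive. Essential smoothness is less immediate: near a boundary point of the intersection of the domains, the gradient of one summand blows up, but cancellation with the other summand must be excluded. This can be handled by using the monotonicity of each $\nabla h_i$ along segments directed toward the boundary. However, the identity itself only needs formula \eqref{eq:Bregman} evaluated at differentiability points of $h$. So we will present the computation above as the proof and remark that it does not depend on the Legendre property of the combination.
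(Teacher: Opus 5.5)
Your proposal is correct and follows the paper's proof, which is just the one-line expansion of \eqref{eq:Bregman} using bilinearity of the inner product. Your checks that $\alpha h_1+\beta h_2$ is finite at $x$ and $y$ and differentiable at $y$, and your remark on its Legendre property, are details the paper leaves implicit.
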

    \begin{proof}
        Direct consequence of the bilinearity of the scalar product in \eqref{eq:Bregman}.
    \end{proof}

    
    \begin{corollary}{\textbf{(Additivity property of the Majorants)}} \\
        Let $f_1$ and $f_2$ satisfy \cref{ass:A1:i} on the open set $\mathcal{D}$
        with  Bregman tangent majorants $Q_{f_1}(\cdot,z), Q_{f_2}(\cdot,z)$ at $z\in\mathcal{D}$, associated respectively with $h_{1,z}$ and $h_{2,z}$
        with $\mathcal{D}\subset \Int(\dom h_{1,z}) \cap \Int(\dom h_{2,z})$. Let $(\alpha, \beta)\in [0,+\infty)^2$. 
        Then, the function $\alpha f_1 + \beta f_2$ satisfies \cref{ass:A1:i}, and a Bregman tangent majorant of it at $z$ is $\alpha Q_{f_1}(\cdot,z) + \beta Q_{f_2}(\cdot,z)$, associated with $\alpha h_{1,z} + \beta h_{2,z}$.
    \end{corollary}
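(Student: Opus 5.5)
The plan is to verify the two parts of the statement directly from the definitions, using \cref{le:linBM} for the Bregman part. Everything reduces to linearity of the tangent-majorant formula \eqref{eq:Bregmandef1} in the triple $(f,\nabla f,h_z)$.

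\textbf{Regularity of the combination.} Set $f=\alpha f_1+\beta f_2$. It is $C^1$ on $\mathcal{D}$ as a linear combination of $C^1$ functions. Its effective domain contains $\mathcal{D}$, and I will read \cref{ass:A1:i} on the given open set $\mathcal{D}$. Properness is immediate: $f$ is finite on the nonempty set $\mathcal{D}$ and, with $\alpha,\beta\ge0$ and the conventions for $+\infty$, never takes the value $-\infty$.

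\textbf{The Legendre hypothesis.} Set $h_z=\alpha h_{1,z}+\beta h_{2,z}$. For $h_z$ to be a valid generator I will check that it is Legendre with $\mathcal{D}\subset\Int(\dom h_z)$ whenever $(\alpha,\beta)\neq(0,0)$.
\begin{itemize}
\item Convexity, properness and lower semicontinuity are preserved under nonnegative combinations.
\item The domain of the combination is the intersection of the domains of the terms with nonzero coefficient, so its interior contains $\mathcal{D}$.
\item Strict convexity holds on that interior, since at least one coefficient is positive.
\item Essential smoothness follows from the subdifferential-sum rule for a nonnegative combination of essentially smooth convex functions with overlapping interiors, as in \cite{Rockafellar1970}. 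This is the one genuinely delicate point; I would cite Rockafellar's result rather than redo it.
\end{itemize}
If $(\alpha,\beta)=(0,0)$ the claim degenerates: $f\equiv0$ and the majorant is $0$. I would treat this case separately, or simply note that \cref{le:linBM} excludes it.

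\textbf{Identity and majorization.} For $x\in\mathcal{D}$, expand $\alpha Q_{f_1}(x,z)+\beta Q_{f_2}(x,z)$ using \eqref{eq:Bregmandef1}. The zeroth-order and gradient terms combine by linearity into $f(z)+\langle\nabla f(z),x-z\rangle$. By \cref{le:linBM}, whose hypotheses hold because $x,z\in\mathcal{D}\subset\Int(\dom h_{1,z})\cap\Int(\dom h_{2,z})$, the Bregman terms combine into $D_{h_z}(x,z)$. Hence the combination is exactly $Q_f(\cdot,z)$ associated with $h_z$. Multiplying the two inequalities \eqref{eq:Bregmandef2} by the nonnegative scalars $\alpha,\beta$ and adding gives $f(x)\le Q_f(x,z)$ on $\mathcal{D}$, which is the definition of a Bregman tangent majorant.

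The only real obstacle is the essential smoothness of $h_z$; the rest is bookkeeping.
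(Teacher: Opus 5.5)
Your proposal is correct and follows the same route as the paper, whose proof is simply ``Consequence of Lemma~\ref{le:linBM}'': merge the Bregman terms by linearity, then add the two majorization inequalities with nonnegative weights. Your extra work fills in points the paper leaves implicit. You check that $\alpha h_{1,z}+\beta h_{2,z}$ is Legendre, getting essential smoothness from the subdifferential sum rule and the characterization of essential smoothness by single-valuedness of the subdifferential. You also note that $(\alpha,\beta)=(0,0)$ must be excluded, since the zero generator is not Legendre even though the corollary's statement formally allows this case.
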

    \begin{proof}
        Consequence of \cref{le:linBM}.
    \end{proof}
    
We will exploit this result in our constructions of majorant functions, in \cref{sec:Appli-PET}. 
    
    We now present a result regarding the construction of Bregman majorant functions, relying on second-order properties of function $f$.

\begin{proposition}\label{e:propBex}{\textbf{(Hessian Characterization)}} \\
    Let $f$ be a twice-continuously differentiable function on an open set $\mathcal{D}\subset \R^{N}$
    and let $z\in \mathcal{D}$.  Let $h_{z}$ be a twice-continuously differentiable Legendre function on $\mathcal{D}$.
    \begin{enumerate}[label=(\roman*)]
    \item\label{e:propBexi} $Q_f(\cdot,z)$ is a Bregman tangent majorant of $f$ at $z$ associated with $h_z$ if and only if
        \begin{equation}\label{e:CNSexistBmaj}
        (\forall x \in \mathcal{D})\quad
        (x-z)^\top C_{h_{z}-f}(x,z) (x-z) \ge 0,
        \end{equation}
        where $C_h$ is defined, for every function $h$ twice-differentiable on $\mathcal{D}$, as
        \begin{equation}
         (\forall x \in \mathcal{D})\quad C_{h}(x,z) = \int_{0}^1 (1-t) \nabla^2 h\big((1-t)z+tx\big) \,\dt.
        \end{equation}
    \item\label{e:propBexii}  Let $h'_{z}$ be another twice-differentiable Legendre function on $\mathcal{D}$. Then, $h_{z}$ is tighter than $h'_{z}$ if and only if
        \begin{equation}
        (\forall x \in \mathcal{D})\quad
        (x-z)^\top C_{h'_{z}-h_{z}}(x,z) (x-z) \ge 0.
        \end{equation}
    \end{enumerate}
\end{proposition}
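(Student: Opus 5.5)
The whole proof rests on the integral form of the Taylor remainder. For any function $\psi$ twice continuously differentiable on $\mathcal{D}$, take $x\in\mathcal{D}$ such that the segment $[z,x]$ lies in $\mathcal{D}$. Then
\begin{equation*}
\psi(x)-\psi(z)-\langle\nabla\psi(z),x-z\rangle=(x-z)^\top C_{\psi}(x,z)(x-z).
\end{equation*}
This follows by applying the identity $\varphi(1)=\varphi(0)+\varphi'(0)+\int_0^1(1-t)\varphi''(t)\dt$ to $\varphi(t)=\psi((1-t)z+tx)$. Two consequences follow. First, taking $\psi=h_z$ gives $D_{h_z}(x,z)=(x-z)^\top C_{h_z}(x,z)(x-z)$. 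Second, since $C_\psi$ is linear in $\psi$ (the Hessian is linear and the integral is linear), we have $C_{h_z-f}=C_{h_z}-C_f$. This is the same linearity as in \cref{le:linBM}, now at the level of second-order remainders.

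For \ref{e:propBexi}, I write the majorization gap directly from \eqref{eq:Bregmandef1}:
\begin{equation*}
Q_f(x,z)-f(x)=D_{h_z}(x,z)-\bigl(f(x)-f(z)-\langle\nabla f(z),x-z\rangle\bigr).
\end{equation*}
Applying the Taylor identity to both terms, this equals $(x-z)^\top C_{h_z-f}(x,z)(x-z)$. By \eqref{eq:Bregmandef2}, $Q_f(\cdot,z)$ is a Bregman tangent majorant exactly when this gap is nonnegative for every $x\in\mathcal{D}$, which is \eqref{e:CNSexistBmaj}. Since the gap and the quadratic form are equal pointwise, both directions of the equivalence come out of the same computation.

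For \ref{e:propBexii}, the argument is the same applied to $D_{h'_z}(x,z)-D_{h_z}(x,z)$. By \cref{le:linBM} (or directly from the definition \eqref{eq:Bregman}, which is linear in the generator), this difference equals $D_{h'_z-h_z}(x,z)$, i.e. the first-order Taylor remainder of $h'_z-h_z$. That remainder is $(x-z)^\top C_{h'_z-h_z}(x,z)(x-z)$. So \cref{def:btm_order}, restricted to reference point $z$ and $x\in\mathcal{D}$, is exactly the stated nonnegativity condition.

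The main obstacle is the segment condition. The Taylor formula requires $[z,x]\subset\mathcal{D}$, but $\mathcal{D}$ is only assumed open, and $C_h(x,z)$ is only defined when the segment stays in $\mathcal{D}$. I would handle this by assuming, or noting that it holds implicitly since $\mathcal{D}=\Int(\dom f)$, that $\mathcal{D}$ is convex: the interior of the domain is convex in all cases of interest, for instance the positive orthant or the domain of a convex Legendre generator. A second point concerns \ref{e:propBexii}. The order in \cref{def:btm_order} quantifies over all reference points $y$, whereas the statement is at a fixed $z$ with $x\in\mathcal{D}$. I would read "tighter" as the order relation restricted to $y=z$ and $x\in\mathcal{D}$, which is the version that matters for \cref{lemma:bregmajorder}, and state that reading explicitly.
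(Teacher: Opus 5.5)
Your proposal is correct and follows the paper's own argument: the integral Taylor remainder applied to $f$ and $h_z$ (resp.\ $h_z$ and $h'_z$), linearity of $\psi\mapsto C_\psi$, the pointwise identity between the gap and the quadratic form, and the same fixed-$z$, $x\in\mathcal{D}$ reading of the order relation in (ii). One small correction: convexity of $\mathcal{D}$ (or star-shapedness about $z$) cannot be deduced from $\mathcal{D}=\Int(\dom f)$ when $f$ is nonconvex, so state it as an implicit hypothesis of the statement---it is needed anyway for $C_h(x,z)$ to be defined---rather than derive it.
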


\begin{proof} Let $z \in \mathcal{D}$.
    \begin{enumerate}[label=(\roman*)]
    \item By definition, $Q_f(\cdot,z)$ is a Bregman tangent majorant of $f$ at $z$ associated with $h_z$ if and only if,
        for every $x\in \mathcal{D}$,
        \begin{align}
            &Q_f(x,z) - f(x) \geq 0 \\
            \Leftrightarrow \quad &f(z)+\langle\nabla f(z), x-z\rangle+D_{h_{z}}(x,z)-f(x) \ge 0\nonumber\\
            \Leftrightarrow \quad &f(z)+\langle \nabla f(z), x-z\rangle+(h_z(x)-h_{z}(z)-\langle\nabla h_{z}(z),x-z\rangle)-f(x)\ge 0.
            \label{e:CNSexistBmaj0}
        \end{align}
        
        On the other hand, according to second-order Taylor formula with Laplace integral remainder, for all $x$ in $\mathcal{D}$,
        \begin{equation}
        f(x)=f(z)+\langle\nabla f(z),x-z\rangle+ (x-z)^\top C_{f}(x,z) (x-z).
        \end{equation}
        Furthermore, for all $x$ in $\mathcal{D}$,
        \begin{equation}
        h_{z}(x)=h_{z}(z)+\langle\nabla h_{z}(z),x-z\rangle+ (x-z)^\top C_{h_{z}}(x,z) (x-z).
        \end{equation}
        Plugging the last two relations in \eqref{e:CNSexistBmaj0} and using the fact that
        $C_{h_{z}-f}=C_{h_{z}}-C_{f}$ by linearity of the integral and the Hessian yields the necessary and sufficient condition \eqref{e:CNSexistBmaj}.
    \item According to \cref{def:btm_order}, $h_{z} \preceq h'_{z}$ if and only if, for every $x\in \mathcal{D}$,
        \begin{align*}
        &D_{h'_{z}}(x,z)-D_{h_{z}}(x,z) \ge 0 \\
        \Leftrightarrow \quad &(h'_z(x)-h'_{z}(z) -\langle\nabla h'_{z}(z), x-z\rangle)-(h_z(x)-h_{z}(z) -\langle\nabla h_{z}(z),x-z\rangle)\ge 0.
        \end{align*}
        The result again follows from second-order Taylor expansions with Laplace integral remainder and the convexity of $h_z$.
    \end{enumerate}
\end{proof}

We recall hereafter a result stemming from the literature of image reconstruction, for building quadratic majorant functions.

\begin{lemma}\label{le:maquad}{\textbf{(Quadratic majorization)}}\cite{ErdoganFessler1999} \\
    Let $\psi$ be a twice-continuously differentiable function on $[-\tau,+\infty)$ with $\tau>0$, and let
    \begin{equation}
       (\forall z\in [-\tau,+\infty))\quad c_{\psi,\tau}(z) = \begin{cases}
        \ddot{\psi}(-\tau) & \mbox{if $z = -\tau$}\\
        \displaystyle 2\frac{\psi(-\tau)-\psi(z)+\dot{\psi}(z)(z+\tau)}{(z+\tau)^2} & \mbox{otherwise.}
        \end{cases}
    \end{equation}
    Let us assume that $\ddot{\psi}$ is decreasing on $[-\tau,+\infty)$ and that $\forall z\in [-\tau,+\infty), c_{\psi,\tau}(z) > 0$. 
    For every $z\in [-\tau,+\infty)$,
    let
    \begin{equation}
    \label{eq:h_z_quad}
        (\forall x \in \R) \quad h_{z}(x) = \frac12 c_{\psi,\tau}(z) x^2.
    \end{equation}
    Then, 
    for every $z \in \mathcal{D} = (-\tau,+\infty)$, $Q_\psi(\cdot,z)$ given by \eqref{def:BregmanMaj} with $h_z$ defined as in \eqref{eq:h_z_quad}, is a Bregman tangent majorant of $\psi$ at $z$ associated with $h_z$. In addition, \eqref{eq:Bregmandef2} with 
    $f=\psi$ extends to $(x,z)\in [-\tau,+\infty)^2$.
\label{le:majquad}
\end{lemma}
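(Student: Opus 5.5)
The majorization inequality reduces to a one-dimensional statement. With $h_z(x)=\tfrac12 c_{\psi,\tau}(z)x^2$ we have $D_{h_z}(x,z)=\tfrac12 c_{\psi,\tau}(z)(x-z)^2$. Hence $Q_\psi(x,z)\ge\psi(x)$ is equivalent to the nonnegativity of
\[
\Delta_z(x):=\psi(z)+\dot\psi(z)(x-z)+\tfrac12 c(z)(x-z)^2-\psi(x)
\]
for all $x\in[-\tau,+\infty)$, where we write $c=c_{\psi,\tau}$. First, $h_z$ is Legendre with full domain because $c(z)>0$, so \cref{ass:A2}-type domain requirements hold trivially. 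The curvature $c(z)$ is chosen exactly so that the parabola $q_z(x)=\psi(z)+\dot\psi(z)(x-z)+\tfrac12 c(z)(x-z)^2$ also passes through the endpoint, i.e. $\Delta_z(-\tau)=0$. I will check this directly from the formula for $c(z)$ when $z>-\tau$. So $\Delta_z$ is $C^2$ on $[-\tau,+\infty)$ and vanishes at both $z$ and $-\tau$, and $\dot\Delta_z(z)=0$.

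The key step uses that $\ddot\Delta_z(x)=c(z)-\ddot\psi(x)$ is nondecreasing in $x$, since $\ddot\psi$ is decreasing. So $\Delta_z$ is a $C^2$ function whose second derivative is monotone: there is a threshold $x_0$ with $\Delta_z$ concave on $[-\tau,x_0]$ and convex on $[x_0,+\infty)$. Writing $\Delta_z(x)=(x-z)^2\int_0^1(1-t)\ddot\Delta_z(z+t(x-z))\,dt$ gives the following cases.
\begin{itemize}
\item For $x\ge z$: all points $z+t(x-z)$ lie to the right of $z$, so $\ddot\Delta_z\ge\ddot\Delta_z(z)$ there. It then suffices to show $\ddot\Delta_z(z)=c(z)-\ddot\psi(z)\ge0$. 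The same Taylor representation at $x=-\tau$ expresses $0=\Delta_z(-\tau)$ as $(z+\tau)^2$ times a weighted average of $\ddot\Delta_z$ over $[-\tau,z]$. Since $\ddot\Delta_z$ is nondecreasing, this average is at most $\ddot\Delta_z(z)$, so $\ddot\Delta_z(z)\ge0$.
\item For $x\in[-\tau,z)$: here I would argue by the concave/convex shape. On $[-\tau,z]$, $\Delta_z$ vanishes at both ends and $\dot\Delta_z(z)=0$. If $\Delta_z$ took a negative value inside, then, because $\ddot\Delta_z$ changes sign at most once from negative to positive, the zero-derivative condition at $z$ together with zero boundary values would be contradicted. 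Concretely, $\Delta_z$ is concave then convex on the interval. Convexity near $z$ with $\dot\Delta_z(z)=0$ forces $\Delta_z\ge0$ just left of $z$ on the convex part. On the concave part, $\Delta_z$ lies above the chord joining its values at $-\tau$ (equal to $0$) and at $x_0$ (which is $\ge0$ by the previous point), so it is $\ge0$.
\end{itemize}

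Finally, I treat the endpoint $z=-\tau$, where $c(-\tau)=\ddot\psi(-\tau)$ is the limit of the generic formula. There, $\ddot\Delta_{-\tau}(x)=\ddot\psi(-\tau)-\ddot\psi(x)\ge0$ on the whole half-line, so $\Delta_{-\tau}$ is convex with a double zero at $-\tau$, hence nonnegative. This yields the extension of \eqref{eq:Bregmandef2} to $(x,z)\in[-\tau,+\infty)^2$, and restricting to $z\in\mathcal{D}$ gives the tangent majorant claim. Equivalently, the whole argument amounts to checking \eqref{e:CNSexistBmaj} of \cref{e:propBex}.

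The main obstacle is the case $x\in[-\tau,z)$. The sign of $\ddot\Delta_z$ there is not uniform, so the argument must combine the boundary interpolation condition with the single sign change of $\ddot\Delta_z$, and the chord argument needs care. The hypothesis $c>0$ is used only to make $h_z$ Legendre.
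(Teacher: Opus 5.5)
Your proof is correct, but it handles the key step differently from the paper and does more work than needed. Both arguments rest on the same two ingredients: the Taylor integral remainder, and the fact that $c_{\psi,\tau}(z)=2C_\psi(-\tau,z)$, i.e.\ the parabola interpolates $\psi$ at $-\tau$ (your $\Delta_z(-\tau)=0$).

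The paper then makes one pointwise comparison. For $x\ge-\tau$ and every $t\in[0,1]$ one has $(1-t)z+tx\ge(1-t)z-t\tau$. Since $\ddot\psi$ is decreasing, this yields $C_\psi(x,z)\le C_\psi(-\tau,z)=\tfrac12 c_{\psi,\tau}(z)$ for all $x$ simultaneously. In your notation, for $x\neq z$, this reads
\[
\frac{\Delta_z(x)}{(x-z)^2}=\int_0^1(1-t)\,\ddot\Delta_z\bigl(z+t(x-z)\bigr)\dt\;\ge\;\int_0^1(1-t)\,\ddot\Delta_z\bigl(z-t(z+\tau)\bigr)\dt=\frac{\Delta_z(-\tau)}{(z+\tau)^2}=0.
\]

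In your first bullet you compare the integrand only with the constant $\ddot\Delta_z(z)$. That settles $x\ge z$, but it discards exactly the information needed for $x<z$. This is why you then need the concave/convex argument you call the main obstacle. With the comparison above, that obstacle disappears and no case split is needed.

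Your shape argument is still valid once you state it on the whole convex piece. Since $\ddot\Delta_z(z)\ge0$ (from your first bullet), the inflection threshold satisfies $x_0\le z$. The tangent-line inequality at $z$ for the convex function $\Delta_z$ on $[x_0,+\infty)$ then gives $\Delta_z\ge0$ on all of $[x_0,z]$, not merely just left of $z$. This is what supplies $\Delta_z(x_0)\ge0$ for the chord step.

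Your treatment of $z=-\tau$ is the same as the paper's. Your route gives a geometric picture: double contact at $z$, interpolation at $-\tau$, and a single inflection of $\Delta_z$. The paper's route gives a uniform two-line verification of the integral criterion.
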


\begin{proof} Let $z \in [-\tau,+\infty)$. Let us first notice that as $c_{\psi,\tau}$ is strictly positive-valued, $h_z$ is a twice-differentiable Legendre function. 
    We first establish the majorizing property in the open set $\mathcal{D}$.
    By performing a second-order Taylor expansion, we have for every
    $z>-\tau$:
    \begin{equation}
        \psi(-\tau) = \psi(z)+\dot{\psi}(z) (-\tau-z)+ (z+\tau)^2 C_{\psi}(-\tau,z).
    \end{equation}
    Hence,
    \begin{equation}
        C_{\psi}(-\tau,z) = \frac{\psi(-\tau)-\psi(z)+\dot{\psi}(z)(z+\tau)}{(z+\tau)^2}.
    \end{equation}
    On the other hand, since $\ddot{\psi}$ is a decreasing function,
    \begin{align}
        (\forall x \in [-\tau,+\infty))\quad C_{\psi}(x,z) = \int_{0}^1 (1-t)\,\ddot{\psi}((1-t)z+tx)\,\dt
        &\le  \int_{0}^1 (1-t)\,\ddot{\psi}((1-t)z -t\tau)\,\dt \nonumber\\
        &= C_{\psi}(-\tau,z)\nonumber\\
        &= 2C_{\psi}(-\tau,z) \int_{0}^1 (1-t)\,\dt\nonumber\\
        & = 2C_{\psi}(-\tau,z) C_{(\cdot)^2/2}(x,z). 
    \end{align}
    When $z=-\tau$, using similar arguments,
    \begin{align}
        (\forall x \in [-\tau,+\infty))\quad C_{\psi}(x,-\tau) = \int_{0}^1 (1-t)\,\ddot{\psi}(tx-\tau(1-t))\,\dt
        &\le  \int_{0}^1 (1-t)\,\ddot{\psi}(-\tau)\,\dt \nonumber\\
        & = \ddot{\psi}(-\tau) C_{(\cdot)^2/2}(x,z). 
    \end{align}
    In summary, 
    \begin{equation}
        C_{\frac{1}{2}c_{\psi,\tau}(z)(\cdot)^2-\psi}(x,z) = C_{c_{\psi,\tau}(z)(\cdot)^2/2}(x,z)-C_{\psi}(x,z)\ge 0,
    \end{equation} 
    which, according to 
\cref{e:propBex}\ref{e:propBexi} shows that
$Q_\psi(\cdot,z)$ is a Bregman tangent majorant of $\psi$ at
$z$ associated with $h_z$.
The majorization property
\eqref{eq:Bregmandef2} 
with $f=\psi$ extends by continuity to $(x,z)\in [-\tau,+\infty)^2$.
\end{proof}

We conclude this subsection with the following lemma, consequence of Jensen's inequality for convex functions (see \cite[Lemma 1.9]{chouzenoux:hal-04250055}\cite{Hunter2004}, for similar results, although not formulated under the Bregman majorizing framework).  

\begin{lemma}\label{le:majlog}{\textbf{(Logarithmic separable majorization)}}\\ 
    Let $\theta = (\theta_{n})_{1\le n \le N}\in [0,+\infty)^N\setminus\{0_N\}$, $(\delta_{1},\delta_{2})\in [0,+\infty)^2$ and let
    \begin{equation}
    \label{eq:psilogjensen}
        (\forall x \in \mathcal{D})\quad \psi(x) = -\ln (\theta^\top x + \delta_{1}+\delta_{2})
    \end{equation}
    where $\mathcal{D}=(-\zeta\delta_1, +\infty)^N$ with $\zeta = 1/\sum_{n=1}^N \theta_{n}$.
    For every $z = (z_{n})_{1\le n \le N} \in \mathcal{D}$,
    let
    \begin{equation} \label{eq:Elog}
        (\forall x = (x_{n})_{1\le n \le N} \in \mathcal{D})\quad  h_{z}(x)= -\sum_{n=1}^N \frac{\theta_{n}(z_{n}+\zeta \delta_{1})}{\theta^\top z+ \delta_{1}+\delta_{2}} \ln(x_{n}+\zeta \delta_{1}).
    \end{equation}
    Then, for all $z\in\mathcal{D}$, function $Q_\psi(\cdot,z)$ with $h_z$ given in \eqref{eq:Elog}, is a Bregman tangent majorant of $\psi$ at $z$.
\end{lemma}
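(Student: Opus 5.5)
Fix $z\in\mathcal{D}$ and abbreviate $u_n=x_n+\zeta\delta_1$, $v_n=z_n+\zeta\delta_1$, both positive on $\mathcal{D}$. Since $\zeta\sum_n\theta_n=1$, we have
\[
\theta^\top x+\delta_1+\delta_2=\sum_n\theta_n u_n+\delta_2 ,
\]
and similarly for $z$ with $S:=\theta^\top z+\delta_1+\delta_2=\sum_n\theta_n v_n+\delta_2>0$. The plan is to compute $Q_\psi(x,z)$ explicitly, reduce the inequality $\psi(x)\le Q_\psi(x,z)$ to a Jensen-type inequality for $-\ln$ with weights $\lambda_n=\theta_n v_n/S$ and $\lambda_0=\delta_2/S$ summing to one, and then check the Legendre requirements of Definition~\ref{def:BregmanMaj} and Assumption~\ref{ass:A2}.

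First compute the pieces. We have $\nabla\psi(z)_n=-\theta_n/S$ and $\nabla h_z(z)_n=-\theta_n v_n/(S v_n)=-\theta_n/S$, so $\nabla\psi(z)=\nabla h_z(z)$. This is the key simplification: the linear terms cancel and, by \eqref{eq:Bregmandef1},
\[
Q_\psi(x,z)=\psi(z)+h_z(x)-h_z(z)=-\ln S-\sum_n\lambda_n\ln(u_n/v_n).
\]
Next, write $\sum_n\theta_n u_n+\delta_2=S\bigl(\sum_n\lambda_n (u_n/v_n)+\lambda_0\cdot 1\bigr)$, where terms with $\theta_n=0$ are dropped. Concavity of $\ln$ (Jensen) then gives
\[
\ln\Bigl(\sum_n\lambda_n\tfrac{u_n}{v_n}+\lambda_0\Bigr)\ge\sum_n\lambda_n\ln\tfrac{u_n}{v_n}+\lambda_0\ln 1 .
\]
Hence $\psi(x)=-\ln S-\ln(\cdots)\le -\ln S-\sum_n\lambda_n\ln(u_n/v_n)=Q_\psi(x,z)$, which is exactly \eqref{eq:Bregmandef2}.

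The main obstacle is the Legendre requirement, not the inequality. If some $\theta_n=0$, the corresponding coordinate is absent from $h_z$, which is then not strictly convex on $\mathbb{R}^N$, so it is not of Legendre type in the strict sense. I would handle this in one of two ways. The first is to note that only the coordinates with $\theta_n>0$ matter and to treat $h_z$ as Legendre on that subspace. The second, which keeps everything within the paper's framework, is to add an arbitrary Legendre term in the missing coordinates and invoke Lemma~\ref{lemma:bregmajorder}, since adding such a term only increases the Bregman distance (Lemma~\ref{le:linBM}). When all $\theta_n>0$, $h_z$ is a positively weighted sum of Burg entropies shifted by $\zeta\delta_1$. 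It is then strictly convex and essentially smooth on $(-\zeta\delta_1,+\infty)^N=\mathcal{D}$, with gradient blowing up at the boundary, so it lies in $\mathcal{L}(\mathbb{R}^N)$ with $\mathcal{D}=\Int\dom h_z$. One further point to check is that $\psi$ is well defined on $\mathcal{D}$: for $x\in\mathcal{D}$ we have $\sum_n\theta_n u_n+\delta_2>0$, so the logarithm's argument is positive.
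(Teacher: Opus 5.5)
Your argument is correct and is essentially the paper's proof. Both use the same Jensen inequality for $-\ln$ with weights $\theta_n(z_n+\zeta\delta_1)/S$ and $\delta_2/S$, and your identity $\nabla\psi(z)=\nabla h_z(z)$ is exactly the add-and-subtract of $\theta^\top(x-z)/S$ in the paper's last two lines.

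Your caveat about indices with $\theta_n=0$ is legitimate: the paper simply asserts that $h_z$ is Legendre, which needs all $\theta_n>0$. However, your second repair produces a majorant associated with $h_z+k$ rather than with $h_z$ itself. That step is best justified directly by $D_{h_z+k}=D_{h_z}+D_k\ge D_{h_z}$ for convex $k$, because Lemmas~\ref{lemma:bregmajorder} and~\ref{le:linBM} already presuppose Legendre generators.
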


\begin{proof} Let $z \in \mathcal{D}$. First notice that $h_z$, as defined in \eqref{eq:Elog}, is a Legendre function, and that it is also twice differentiable on $\mathcal{D}$. Then, using \eqref{eq:psilogjensen}, for every $x = (x_{n})_{1\le n \le N} \in \mathcal{D}$,
    \begin{align}
        \psi(x) &= -\ln\left(\sum_{n=1}^N \theta_{n}(x_{n}+\zeta \delta_{1})+\delta_{2}\right)\nonumber\\
        & = \psi(z)-\ln\left(\sum_{n=1}^N \frac{\theta_{n}(z_{n}+\zeta \delta_{1})}{\theta^\top z+\delta_{1}+\delta_{2}}\frac{x_{n}+\zeta \delta_{1}}{z_{n}+\zeta \delta_{1}}+\frac{\delta_{2}}{\theta^\top z+\delta_{1}+\delta_{2}}\right)\nonumber\\
        & \le \psi(z)- \sum_{n=1}^N \frac{\theta_{n}(z_{n}+\zeta \delta_{1})}{\theta^\top z+\delta_{1}+\delta_{2}} \ln\left(\frac{x_{n}+\zeta \delta_{1}}{z_{n}+\zeta \delta_{1}}\right)
        - \frac{\delta_{2}}{\theta^\top z+\delta_{1}+\delta_{2}} \ln 1\label{e:inqJenmajlog}\\
        & = \psi(z)-\frac{\theta^\top (x-z)}{\theta^\top z +\delta_{1}+\delta_{2}}+h_{z}(x)-h_{z}(z)+\sum_{n=1}^N \frac{\theta_{n} (x_{n}-z_{n})}{\theta^\top z+\delta_{1}+\delta_{2}}\nonumber\\
        & = \psi(z)+\langle\nabla \psi(z),x-z\rangle+D_{h_{z}}(x,z)\label{eq:prooflogjensen},
    \end{align}
    where Jensen's inequality is used to get \eqref{e:inqJenmajlog}. 
    This yields the sought majorant.
\end{proof}

\subsection{A family of separable Bregman majorants for Poisson data fidelity term}\label{subsec:family-maj}
We now apply the previous constructive results to design Bregman tangent majorants for a logarithmic composite function, that is encountered as a Poisson data fidelity term in imaging inverse problems \cite{BerteroBOOK}, as explored in our \cref{sec:Appli-PET}. Other application examples for these majorizing results are barrier function minimization, in interior point methods \cite{CorbineauPIPA,ChouzenouxBarrier}, and entropy maximization problems in statistical signal processing \cite{ChouzenouxRMN}.
\\
For every $m\in \llbracket 1,M\rrbracket$, let $H_m = (H_{m,n})_{1\le n\le N}$ be the $m$-th line of $H\in\R^{M \times N}_{+}$, and $y=(y_m)_{1\le m \le M} \in \N^M$. \reviewchanges{We assume that $\sum_{n=1}^N H_{m,n}>0$ for every
$m\in\llbracket1,M\rrbracket$ and that $H^\top y>0$, which are standard
assumptions when $H$ models a tomographic projection operator. The above assumptions ensure that every coefficient appearing in the
definition of the proposed majorants is strictly positive. Therefore,
all the associated logarithmic and quadratic generators are Legendre on
their respective domains. Furthermore,} 
let $b_{m}\in (0,+\infty)$, let  $\zeta_{m}=1/\sum_{n=1}^N H_{m,n}$, and let $\rho = \min_{m \in \llbracket 1,M\rrbracket} \zeta_m b_m$. 

We introduce
\begin{align}\label{def:notation_somme_log_hx_b}
    &(\forall x \in (-\rho,+\infty)^N)\quad \ell(x) = 
    \sum_{m=1}^M \ell_m(x),\\
\text{where}\quad 
& (\forall m \in \llbracket 1,M\rrbracket) \quad \ell_m(x)= -y_{m}\ln (H_m x + b_{m}).
\end{align}
Function \eqref{def:notation_somme_log_hx_b} is twice differentiable on $(-\rho,\infty)^N$. In the following, we present a collection of Legendre functions, each twice differentiable on the interior of their domain, providing a separable Bregman tangent majorant approximation for $\ell$ on an open set $\mathcal{D}$ to be specified. The proposed Legendre functions are indexed,
for an easier comparison between them. The end of the section is dedicated to the positioning of the obtained majorizing results, with respect to existing formulations in the literature of Poisson image reconstruction.\\
In the following,
$(x_n)_{n\in \N}$ and $(z_n)_{n\in \N}$ denote the components of vectors $x$ and $z$, respectively.

\subsubsection{Log-shift Majorants}

Let us first introduce a family of logarithmic separable majorants, relying on \cref{le:majlog}.

\begin{proposition}\label{prop:logshift_majorants}
Let $\ell$ be defined by \eqref{def:notation_somme_log_hx_b}. The following Legendre functions are associated with separable Bregman tangent majorants of $\ell$ on a certain open set $\mathcal{D}\subseteq \dom(\ell)$.

 \begin{description}
    \item[(\textbf{Maj-$\majO$})] Let $\mathcal{D}=(-\rho,+\infty)^N$. Let $z \in \mathcal{D}$ and \begin{align}
    \label{eq:h0}
    &(\forall x 
    \in \mathcal{D}) \quad h_{\majO,z}(x)= -\sum_{n=1}^N a_{\majO,n}(z) \ln (x_{n}+\rho)\\
    \label{eq:a0}
    \text{with}\quad
            &(\forall n \in \llbracket 1,N\rrbracket )\quad 
            a_{\majO,n}(z) = \sum_{m=1}^M y_{m}  \frac{H_{m,n}(z_{n}+\zeta_{m}b_{m})}{H_m z+ b_{m}}.
\end{align}

\item[(\textbf{Maj-$\majII$})]  Let $\mathcal{D}=(-\rho,+\infty)^N$. Let $z \in \mathcal{D}$ and \begin{align}
    &(\forall x 
    \in \mathcal{D}) \quad h_{\majII,z}(x)= -\sum_{n=1}^N a_{\majII,n} \ln (x_{n}+\rho)  \label{eq:a2}\\
\text{with}\quad
            &(\forall n \in \llbracket 1,N\rrbracket )\quad 
            a_{\majII,n} = \sum_{m=1}^M y_{m}\mathbbm{1}_{H_{m,n}\neq 0}.
        \end{align}

    \item[(\textbf{Maj-$\majIII$})]  \label{prop:listmajiii} Let $\mathcal{D}=(-\rho,+\infty)^N$. Let $z\in \mathcal{D}$ and
            \begin{equation}
            (\forall x \in \mathcal{D})\quad
                h_{\majIII,z}(x)= \sum_{n=1}^N 
                a_{\majO,n}(z) 
                \varphi_n(x_n)          
            \end{equation}
            where $ (a_{\majO,n})_{1\le n\le N}$ is defined in \eqref{eq:a0}
            and, for every $n\in \llbracket 1,N\rrbracket $,
    \begin{equation}
    \label{e:defvarphin}
        (\forall \xi \in (-\rho,+\infty))\quad
        \varphi_{n}(\xi) = 
        \left(\mathbbm{1}_{\xi\ge z_n} \Big(\frac{(\xi-z_{n})^2}{2(z_{n}+\rho)^2}-\frac{\xi-z_{n}}{z_{n}+\rho}\Big)- \mathbbm{1}_{\xi<z_n} \ln \Big(\frac{\xi+\rho}{z_{n}+\rho}\Big)\right).
    \end{equation}
            
    \item[(\textbf{Maj-$\majVIII$})] \label{prop:listmajviii} Let $\mathcal{D}=(-\mu,+\infty)^N$ with $\mu \in [0,\rho]$. Let $z \in \mathcal{D}$ and 
    \begin{align} \label{eq:h8}
    &(\forall x 
    \in
    \mathcal{D})\quad
            h_{\majVIII,z}(x)= -\sum_{n=1}^N a_{\majVIII,n}(z) \ln (x_{n}+\mu)\\\
        \text{with}\quad
            &(\forall n \in \llbracket 1,N\rrbracket )\quad 
            a_{\majVIII,n}(z) = \sum_{m=1}^M y_{m}  \frac{H_{m,n}(z_{n}+\mu)}{H_m z+ b_{m}}. \label{eq:a8}
        \end{align}
\end{description}
In addition, the following order relations hold, for every $z \in (-\rho,+\infty)^N$:
    \begin{enumerate}[label=(\roman*)]        
    \item\label{prop:ordre_02} $h_{\majO,z} \preceq h_{\majII,z}$;
    \item\label{prop:ordre_03} $h_{\majO,z}\preceq h_{\majIII,z}$;
    \item\label{prop:ordre_80} if $\mu=\rho$, $h_{\majVIII,z} \preceq h_{\majO,z}$.
    \end{enumerate}
\end{proposition}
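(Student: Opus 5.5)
Our approach has two stages. First we establish that Maj-\majO{} and Maj-\majVIII{} are genuine majorants, by applying \cref{le:majlog} term by term. Then we transfer majorization to Maj-\majII{} and Maj-\majIII{} using the order relations and \cref{lemma:bregmajorder}.

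\textbf{Maj-\majO.} For each $m$, write $\ell_m(x) = -y_m\ln(H_m x + b_m)$. We apply \cref{le:majlog} with $\theta=H_m^\top$ and $\delta_2=0$. The shift must be uniform in $n$, so we split $b_m=\delta_1+\delta_2$ with $\delta_1=\rho/\zeta_m\le b_m$ and $\delta_2=b_m-\rho/\zeta_m\ge 0$; then $\zeta_m\delta_1=\rho$. The lemma gives a majorant of $\ell_m$ on $(-\rho,+\infty)^N$ with generator
\[
-\sum_n \frac{H_{m,n}(z_n+\rho)}{H_m z+b_m}\ln(x_n+\rho).
\]
Multiplying by $y_m$ and summing via the additivity corollary yields a separable log majorant. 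This produces coefficients $H_{m,n}(z_n+\rho)$, whereas \eqref{eq:a0} has $H_{m,n}(z_n+\zeta_m b_m)$. The latter is larger since $\zeta_m b_m\ge\rho$. So the argument as written gives Maj-\majVIII{} with $\mu=\rho$. For general $\mu\in[0,\rho]$ we take $\delta_1=\mu/\zeta_m$ instead. To get Maj-\majO{} itself, we note that $-a\ln(x_n+\rho)$ has one-dimensional Bregman distance $a\,\bigl(t-1-\ln t\bigr)$ with $t=(x_n+\rho)/(z_n+\rho)$, which is nondecreasing in $a$. Hence increasing each coefficient gives a larger divergence, which proves order \ref{prop:ordre_80} and, through \cref{lemma:bregmajorder}, that Maj-\majO{} is a majorant.

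\textbf{Order \ref{prop:ordre_02}.} We need $a_{\majO,n}(z)\le a_{\majII,n}$, i.e.
\[
\frac{H_{m,n}(z_n+\zeta_m b_m)}{H_m z+b_m}\le \mathbbm{1}_{H_{m,n}\neq0}.
\]
We will check this by writing $H_m z+b_m=\sum_{n'} H_{m,n'}(z_{n'}+\zeta_m b_m)$. This identity holds because $\zeta_m\sum_{n'}H_{m,n'}=1$. If all terms $z_{n'}+\zeta_m b_m$ are positive, the $n$-th term is at most the sum. The subtle point is that positivity requires $z_{n'}>-\zeta_m b_m$, and this is guaranteed since $z_{n'}>-\rho\ge-\zeta_m b_m$. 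With this inequality, Maj-\majII{} follows from \cref{lemma:bregmajorder} and the monotonicity in coefficients noted above.

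\textbf{Order \ref{prop:ordre_03}, the main obstacle.} We must compare, per coordinate, $D_{\varphi_n}(\xi,z_n)$ with the Burg divergence $t-1-\ln t$. It is natural to rescale by $s=\xi-z_n$ and $c=z_n+\rho$, so that $t=1+s/c$.

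For $\xi<z_n$, we will check that $\varphi_n$ equals $-\ln((\xi+\rho)/c)$, so its divergence is $-\ln t+\varphi_n'(z_n)(\ldots)$. This requires verifying that the one-sided derivatives at $z_n$ match, both being $-1/c$, so that $\varphi_n$ is $C^1$ and Legendre (convex, essentially smooth at $-\rho$). Since $\varphi_n$ coincides with $-\ln(\cdot+\rho)$ up to an affine function below $z_n$, the divergence is then exactly $t-1-\ln t$.

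For $\xi\ge z_n$, the divergence is $s^2/(2c^2)=(t-1)^2/2$. We then need the elementary inequality
\[
t-1-\ln t\le \tfrac{(t-1)^2}{2}\quad\text{for }t\ge1.
\]
This follows from $g(t)=(t-1)^2/2-t+1+\ln t$ satisfying $g(1)=0$ and $g'(t)=(t-1)-1+1/t=(t-1)^2/t\ge0$. Multiplying by $a_{\majO,n}(z)$, summing over $n$, and applying \cref{lemma:bregmajorder} proves Maj-\majIII{}.

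The delicate parts are the $C^1$ gluing at $z_n$ and the bookkeeping of shifts, $\rho$ versus $\zeta_m b_m$.
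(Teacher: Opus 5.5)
Your proposal is correct. For Maj-2, Maj-3, Maj-4 and relations (i)--(iii) it matches the paper in substance. The paper also gets Maj-4 from \cref{le:majlog} with exactly your split $\delta_1=\mu/\zeta_m$, $\delta_2=b_m-\mu/\zeta_m$. It proves (i) and (iii) by comparing coefficients, and (ii) coordinatewise according to the sign of $x_n-z_n$. Your inequality $t-1-\ln t\le (t-1)^2/2$ for $t\ge 1$ is the paper's bound $C_{-\ln(\cdot+\rho)}(x_n,z_n)\le 1/(2(z_n+\rho)^2)$ written in closed form.

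The genuine difference is how Maj-1 is obtained. The paper derives it independently of Maj-4. It applies \cref{le:majlog} with $\delta_1=b_m$, $\delta_2=0$, which gives the generator $\widetilde h_{1,z}$ with measurement-dependent shifts $\ln(x_n+\zeta_m b_m)$. Replacing each $\zeta_m b_m$ by $\rho$ in the integrand of $C$ then shows $\widetilde h_{1,z}\preceq h_{1,z}$. You instead deduce Maj-1 from Maj-4 with $\mu=\rho$ together with (iii), using monotonicity of $a(t-1-\ln t)$ in $a$.

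Your route buys the following:
\begin{itemize}
\item It is more economical: a single application of \cref{le:majlog}, with Maj-4 as the root from which the other log-shift majorants are loosenings.
\item Your coefficient-monotonicity argument gives (i) and (iii) at every base point, as \cref{def:btm_order} literally requires.
\item You also justify $a_{1,n}(z)\le a_{2,n}$ and the Legendre property of $\varphi_n$, which the paper leaves implicit.
\end{itemize}
The paper's detour does pay off later. The expression \eqref{e:needforquad} for $\widetilde h_{1,z}$, with its per-measurement shifts, is what the paper reuses to build the quadratic majorant Maj-7 of \cref{prop:quadratic_majorants}, whose coefficients involve $c_\tau(z_n,\zeta_m b_m)$. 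Your route does not produce that intermediate majorant, so it would have to be derived there anyway.

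Two minor points. First, drop the initial ``$\delta_2=0$'' in your Maj-1 paragraph, since it contradicts the split you then actually use. Second, your check of (ii) is only at base point $z$. That suffices for \cref{lemma:bregmajorder} and is also what the paper does. You can get the full order relation at once: $\varphi_n''\ge(\cdot+\rho)^{-2}$ on $(-\rho,+\infty)$, with equality below $z_n$. Hence $\varphi_n+\ln(\cdot+\rho)$ is convex, and the divergence inequality holds at every base point.
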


\begin{proof}
 \begin{description} 
    \item[(\textbf{Maj-$\majO$})] Let $\mathcal{D}=(-\rho,+\infty)^N$. We apply \cref{le:majlog} to each component $(\ell_m)_{m\in \llbracket 1,M\rrbracket}$ of $\ell$ (set $\theta = H_m$, $\delta_{1}=b_{m}$, and $\delta_{2}=0$). Then, using the additivity property, we deduce that a Bregman tangent majorant of $\ell$ at $z$ is associated with
    \begin{equation}
        (\forall x \in \mathcal{D}=(-\rho,+\infty)^N)\quad
        \widetilde{h}_{\majO,z}(x)= -\sum_{m=1}^M \sum_{n=1}^N y_{m}\frac{H_{m,n}(z_{n}+\zeta_{m}b_{m})}{H_m z+b_{m}} \ln (x_n+\zeta_{m} b_{m}).
    \end{equation}
    Then, for every $x\in \mathcal{D}$,
    \begin{align}
        (x-z)^\top C_{\widetilde{h}_{\majO,z}}(x-z)\nonumber
        =& \sum_{m=1}^M \sum_{n=1}^N y_{m}\frac{H_{m,n}(z_{n}+\zeta_{m}b_{m})}{H_m z+b_{m}} 
        \int_{0}^1 \frac{(1-t)\, (x_{n}-z_{n})^2}{((1-t) z_n+t x_{n}+\zeta_{m} b_{m})^2}\, \dt\label{e:needforquad}\\
        \le & \sum_{m=1}^M \sum_{n=1}^N y_{m}\frac{H_{m,n}(z_{n}+\zeta_{m}b_{m})}{H_m z+b_{m}} 
        \int_{0}^1 \frac{(1-t)\, (x_{n}-z_{n})^2}{((1-t) z_n+t x_{n}+\rho)^2}\, \dt\nonumber\\
        =& \sum_{n=1}^N a_{\majO,n}(z) \int_{0}^1 \frac{(1-t)\, (x_{n}-z_{n})^2}{((1-t) z_n+t x_{n}+\rho)^2}\,\dt
        = (x-z)^\top C_{h_{\majO,z}}(x-z).
    \end{align}
    This shows that $\widetilde{h}_{\majO,z} \preceq h_{\majO,z}$ and it follows from \cref{lemma:bregmajorder} that a Bregman tangent majorant of $\ell$ at $z$ is associated with $h_{\majO,z}$.
\item[(\textbf{Maj-$\majII$})] Let $\mathcal{D}=(-\rho,+\infty)^N$. For every $x\in \mathcal{D}$,
    \begin{align}
        (x-z)^\top C_{h_{\majO,z}}(x-z)
        \le & \sum_{n=1}^N \sum_{m=1}^M  y_{m}\mathbbm{1}_{H_{m,n}\neq 0}
        \int_{0}^1 \frac{(1-t)\, (x_{n}-z_{n})^2}{((1-t) z_n+t x_{n}+\rho)^2}\, \dt\nonumber\\
        =& \sum_{n=1}^N a_{\majII,n} \int_{0}^1 \frac{(1-t)\, (x_{n}-z_{n})^2}{((1-t) z_n+t x_{n}+\rho)^2}\,\dt
        = (x-z)^\top C_{h_{\majII,z}}(x-z).
    \end{align}
    Hence $h_{\majO,z} \preceq h_{\majII,z}$, proving \ref{prop:ordre_02}.  Moreover, by \cref{lemma:bregmajorder}, a Bregman tangent majorant of $\ell$ at $z$ is associated with $h_{\majII,z}$.
\item[(\textbf{Maj-$\majIII$})] Let $\mathcal{D}=(-\rho,+\infty)^N$ 
and $x\in \mathcal{D}$.
    For every $n\in \llbracket 1,N\rrbracket $ such that $x_{n} \ge z_{n}$,
\begin{align}\label{e:majlogquad3}
        C_{-\ln(\cdot+\rho)}(x_{n},z_{n}) = \int_{0}^1 \frac{(1-t)}{((1-t) z_n+t x_{n}+\rho)^2}\,\dt
        &\le \frac{1}{2(z_{n}+\rho)^2}  \nonumber\\
        & 
        = \frac{1}{(z_{n}+\rho)^2}  C_{(\cdot)^2/2}(x_{n},z_{n}).
    \end{align}
    For every $n\in \llbracket 1,N\rrbracket $, let $\varphi_{n}$ be defined by    
    \eqref{e:defvarphin}.
    Using  \eqref{e:majlogquad3} leads to
\begin{align}
(x-z)^\top C_{h_{\majO,z}}(x-z) 
&= \sum_{n=1}^N a_{\majO,n}(z) \int_{0}^1 \frac{(1-t)\, (x_{n}-z_{n})^2}{((1-t) z_n+t x_{n}+\rho)^2}\,\dt \nonumber\\
&\le \sum_{n=1}^N a_{\majO,n}(z) \left(\mathbbm{1}_{x_{n}\ge z_{n}} \left(\frac{1}{(z_{n}+\rho)^2}  C_{(\cdot)^2/2}(x_{n},z_{n})\right)\right. \nonumber\\
&\qquad \left. + \mathbbm{1}_{x_{n}< z_{n}} \left(C_{-\ln(\cdot+\rho)}(x_{n},z_{n})\right)\right) (x_{n}-z_{n})^2 \label{eq:endproofmajIII}\\
&= \sum_{n=1}^N a_{\majO,n}(z) C_{\varphi_{n}}(x_{n},z_{n}) (x_{n}-z_{n})^2 = (x-z)^\top C_{h_{\majIII,z}}(x-z), \nonumber
\end{align}
    which shows that ${h_{\majO,z}} \preceq {h_{\majIII,z}}$, hence proving \ref{prop:ordre_03}. Consequently, a Bregman tangent majorant of $\ell$ at $z$ is associated with $h_{\majIII,z}$ according to \cref{lemma:bregmajorder}.
    \item[(\textbf{Maj-$\majVIII$})] Let $\mathcal{D}=(-\mu,+\infty)^N$. We have, for every $x \in\mathcal{D}$ and $m \in \llbracket 1,M\rrbracket$,
            \begin{equation} 
                \ln (H_m x+b_m)  = \ln (H_m x + \alpha_m b_{m} + (1-\alpha_m) b_m),
            \end{equation}
            where 
             $\alpha_{m} = \frac{\mu}{\zeta_{m} b_{m}}$ and $\mu \in [0,\rho]$.
            According to \cref{le:majlog} (set $\theta = H_m$, $\delta_{1}=\alpha_m b_{m}$, and $\delta_{2}=(1-\alpha_{m})b_{m}$), a Bregman tangent majorant of $\ell$ at $z$ is associated with
            \begin{align}
                (\forall x \in \mathcal{D}) \quad
                h_{\majVIII,z}(x) 
                & = -\sum_{n=1}^N \sum_{m=1}^M y_{m}\frac{H_{m,n}(z_{n}+\mu)}{H_m z+b_{m}} \ln (x_n+\mu).
            \end{align}
            Furthermore, if $\mu=\rho$, then $\mathcal{D}=(-\rho,+\infty)^N$, and for every $x\in\mathcal{D}$,
    \begin{align}
        (x-z)^\top C_{h_{\majVIII,z}}(x-z)\nonumber
        &=  \sum_{n=1}^N \sum_{m=1}^M  y_{m}\frac{H_{m,n}(z_{n}+\rho)}{H_m z+b_{m}} 
        \int_{0}^1 \frac{(1-t)\, (x_{n}-z_{n})^2}{((1-t) z_n+t x_{n}+\rho)^2}\, \dt\nonumber\\
        & \leq \sum_{m=1}^M \sum_{n=1}^N y_{m}\frac{H_{m,n}(z_{n}+\zeta_{m}b_{m})}{H_m z+b_{m}} 
        \int_{0}^1 \frac{(1-t)\, (x_{n}-z_{n})^2}{((1-t) z_n+t x_{n}+\rho)^2}\, \dt\nonumber\\
        &= (x-z)^\top C_{h_{\majO,z}}(x-z).
    \end{align}
    Hence if $\mu=\rho$, $h_{\majVIII,z} \preceq h_{\majO,z}$, showing \ref{prop:ordre_80}.
    \end{description}
\end{proof}

\subsubsection{Log-0 Majorants}

Let us now introduce another family of logarithmic separable majorants, with barrier at $0$.

\begin{proposition}
\label{prop:log0majorants}

Let $\ell$ be defined by \eqref{def:notation_somme_log_hx_b}. The following Legendre functions are associated with separable Bregman tangent majorants of $\ell$ at $z \in \mathcal{D}$ on the open set $\mathcal{D}= (0,+\infty)^N$.

\begin{description}
    \item[(\textbf{Maj-$\majI$})] 
Let 
\begin{equation}
 \label{eq:h1}
    (\forall x 
    \in (0,+\infty)^{N}) \quad h_{\majI,z}(x)= - \sum_{n=1}^N a_{\majO,n}(z) \ln x_{n}
\end{equation}
with
$a_{\majO,n}$ defined by \eqref{eq:a0}.
    \item[(\textbf{Maj-$\majVI$})]
    Let 
\begin{align}
    &(\forall x 
    \in (0,+\infty)^{N}) \quad h_{\majVI,z}(x)= - \sum_{n=1}^N a_{\majVI,n}(z) \ln x_{n} \label{eq:majh6}\\
\text{with}\quad 
&       (\forall n \in \llbracket 1,N\rrbracket )\quad 
        a_{\majVI,n}(z) = \sum_{m=1}^M y_{m}\frac{H_{m,n}z_{n}}{H_m z+ b_{m}}. \label{eq:maj6a6}
    \end{align}
\end{description}
In addition, the following order relations hold:
    \begin{enumerate}[label=(\roman*)]
        \item\label{prop:ordre_01} $h_{\majO,z} \preceq h_{\majI,z}$; 
        \item $h_{\majVI,z} \preceq h_{\majI,z}$. \label{prop:ordre_61}
    \end{enumerate}
\end{proposition}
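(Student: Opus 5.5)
The plan is to get Maj-$\majVI$ directly from \cref{le:majlog}, to get Maj-$\majI$ by comparing it with Maj-$\majO$ through the order relation, and to read off relation \ref{prop:ordre_61} from the coefficients. Throughout, $\mathcal{D}=(0,+\infty)^N$ and $z\in\mathcal{D}$. Since $\rho>0$, we have $\mathcal{D}\subset(-\rho,+\infty)^N$. Hence the Maj-$\majO$ majorant of \cref{prop:logshift_majorants} is available at $z$, and it still majorizes $\ell$ when restricted to $\mathcal{D}$. Moreover, $\mathcal{D}\subset \Int(\dom h)$ for each of the generators $h_{\majO,z}$, $h_{\majI,z}$ and $h_{\majVI,z}$. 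These generators are Legendre, because their coefficients are strictly positive under the standing assumptions on $H$ and $y$.

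\textbf{Maj-$\majVI$.} For each $m$, I would apply \cref{le:majlog} to $\ell_m/y_m$ with $\theta=H_m$, $\delta_1=0$ and $\delta_2=b_m$. Then $\zeta\delta_1=0$, so the lemma's domain is exactly $(0,+\infty)^N$. The associated generator is
\[
-\sum_{n} \frac{H_{m,n}z_n}{H_m z+b_m}\ln x_n .
\]
Multiplying by $y_m\ge 0$ and summing over $m$ with the additivity corollary gives exactly $h_{\majVI,z}$, with coefficients $a_{\majVI,n}(z)$ as in \eqref{eq:maj6a6}.

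\textbf{Order \ref{prop:ordre_01} and Maj-$\majI$.} I would compute $(x-z)^\top C_{h_{\majO,z}}(x-z)$ as in the proof of \cref{prop:logshift_majorants}. Its $n$-th term is
\[
a_{\majO,n}(z)\int_0^1 \frac{(1-t)(x_n-z_n)^2}{\big((1-t)z_n+tx_n+\rho\big)^2}\,\dt .
\]
For $x,z\in(0,+\infty)^N$ the denominator base $(1-t)z_n+tx_n$ is positive, so removing $\rho>0$ only increases the integrand. This yields $(x-z)^\top C_{h_{\majO,z}}(x-z)\le (x-z)^\top C_{h_{\majI,z}}(x-z)$ on $\mathcal{D}$. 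By \cref{e:propBex}\ref{e:propBexii}, this means $h_{\majO,z}\preceq h_{\majI,z}$ on $\mathcal{D}$. \cref{lemma:bregmajorder}, applied with the Maj-$\majO$ majorant restricted to $\mathcal{D}$, then shows that $h_{\majI,z}$ generates a Bregman tangent majorant of $\ell$ at $z$.

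\textbf{Order \ref{prop:ordre_61}.} Since $\zeta_m b_m>0$, each summand of $a_{\majVI,n}(z)$ is bounded by the corresponding summand of $a_{\majO,n}(z)$, so $a_{\majVI,n}(z)\le a_{\majO,n}(z)$. Both generators are weighted sums of $-\ln x_n$. By \cref{le:linBM}, $D_{h_{\majI,z}}-D_{h_{\majVI,z}}$ is the Bregman distance of
\[
-\sum_n\big(a_{\majO,n}(z)-a_{\majVI,n}(z)\big)\ln x_n ,
\]
which is convex with nonnegative weights, so this difference is nonnegative. To avoid the issue that this difference generator may fail to be strictly Legendre when some weight vanishes, I would instead argue via the $C$-integrals: $(x-z)^\top C_{h_{\majI,z}-h_{\majVI,z}}(x-z)\ge 0$ because each integrand is nonnegative.

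The only delicate point is bookkeeping on domains. The order relation must be checked on the intersection of the domains, which is $(0,+\infty)^N$, and the Maj-$\majO$ majorization must be used only after restricting it from $(-\rho,+\infty)^N$ to $\mathcal{D}$. With that handled, the rest is direct comparison of integrands.
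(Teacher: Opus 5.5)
Your proposal is correct and follows the paper's proof (Appendix~A) essentially step by step: Maj-$\majVI$ comes from \cref{le:majlog} with $\delta_1=0$, $\delta_2=b_m$ together with additivity. Maj-$\majI$ and relation (i) come from dropping $\rho$ in the $C$-integrals of $h_{\majO,z}$ and invoking \cref{lemma:bregmajorder}, and relation (ii) follows from $a_{\majVI,n}(z)\le a_{\majO,n}(z)$. Your extra care about restricting domains and about applying the lemma to $\ell_m/y_m$ (where $y_m=0$ simply contributes nothing) only makes explicit bookkeeping that the paper leaves implicit.
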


\begin{proof}
Proof similar to \cref{prop:logshift_majorants}, detailed in supplementary material \cref{annex:prooflog0}.
\end{proof}

\subsubsection{Quadratic majorants}

Finally, let us present a family of quadratic separable majorants.

\begin{proposition}\label{prop:quadratic_majorants}
Let $\ell$ be defined by \eqref{def:notation_somme_log_hx_b} and let $\tau\in(0,\rho)$. The following Legendre functions yield separable quadratic Bregman tangent majorants of $\ell$  at $z\in \mathcal{D} = (-\tau,+\infty)^N$.
 \begin{description}
 \item[(\textbf{Maj-$\majV$})]
 Let
 \begin{align}
    &(\forall x \in \R^N) \quad h_{\majV,z}(x)= \frac{1}{2} \sum_{n=1}^N a_{\majV,n}(z)x_{n}^2\\
\text{with}\quad
                &(\forall n \in \llbracket 1,N\rrbracket) \quad a_{\majV,n}(z) = \sum_{m=1}^M y_{m}  \frac{H_{m,n}(z_{n}+\zeta_{m}b_{m})}{H_m z+ b_{m}} c_\tau(z_{n},\zeta_{m}b_{m}),
            \end{align}
            where, for every $\eta \in (\tau,+\infty)$,
            \begin{equation}\label{e:defcm}
                (\forall \xi \in (-\tau,+\infty))
                \quad
                c_\tau(\xi,\eta) = 
                \displaystyle-\frac{2}{(\xi+\tau)}
                \left(\frac{1}{(\xi+\tau)}
                \ln\Big(\frac{\eta-\tau}{\xi+\eta}\Big)+\frac{1}{\xi+\eta}
                \right). 
            \end{equation}
             \item[(\textbf{Maj-$\majVp$})] \label{prop:listmajivbis}
 Let
 \begin{align}
  \label{eq:h5p}
    &(\forall x 
    \in \R^N) \quad h_{\majVp,z}(x)= \frac{1}{2} \sum_{n=1}^N a_{\majVp,n}(z)x_{n}^2\\
\text{with}\quad 
                &(\forall n \in \llbracket 1,N\rrbracket) \quad a_{\majVp,n}(z) = \sum_{m=1}^M y_{m}  \frac{H_{m,n}(z_{n}+\zeta_{m}b_{m})}{H_m z+ b_{m}} c_\tau(z_{n},\rho).
                \label{eq:a5p}
            \end{align}
         \item[(\textbf{Maj-$\majVII$})] \label{prop:listmajv} 
 Let
 \begin{align}
    &(\forall x 
    \in \R^N) \quad h_{\majVII,z}(x)= \frac{1}{2} \sum_{n=1}^N a_{\majVII,n}(z)x_{n}^2\\
\text{with}\quad       
                &(\forall n \in \llbracket 1,N\rrbracket )\quad a_{\majVII,n}(z) = \sum_{m=1}^My_m\frac{H_{m,n}}{\zeta_m}c_\tau(H_mz,b_m).
            \end{align}
    \end{description}
In addition, the following order relations hold:
    \begin{enumerate}[label=(\roman*)]
        \item\label{prop:ordre_55'} $h_{\majV,z} \preceq h_{\majVp,z}$; 
        \item \label{prop:listmajb}  if, for every $m\in \llbracket 1,M\rrbracket$, $\zeta_{m} b_{m}= \rho$, then $h_{\majIII,z} \preceq h_{\majV,z}$. \label{prop:ordre_35}
    \end{enumerate}
\end{proposition}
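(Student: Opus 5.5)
The plan is to reduce every item to the one-dimensional quadratic majorization of \cref{le:maquad}, applied to shifted logarithms, and then to propagate the one-dimensional bounds through \cref{e:propBex}\ref{e:propBexi} (for the majorization) and \cref{e:propBex}\ref{e:propBexii} together with \cref{lemma:bregmajorder} (for the order relations).

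\emph{Identifying $c_\tau$.} Fix $\eta>\tau$ and set $\psi_\eta(u)=-\ln(u+\eta)$ on $[-\tau,+\infty)$. A direct computation gives
\[
c_{\psi_\eta,\tau}(\xi)=\frac{2}{(\xi+\tau)^2}\Bigl(-\ln(\eta-\tau)+\ln(\xi+\eta)-\frac{\xi+\tau}{\xi+\eta}\Bigr)=c_\tau(\xi,\eta),
\]
which is exactly \eqref{e:defcm}. The hypotheses of \cref{le:maquad} are then easy to check.
\begin{itemize}
\item $\ddot\psi_\eta(u)=(u+\eta)^{-2}$ is decreasing on $[-\tau,+\infty)$, since $\eta>\tau$.
\item $c_\tau(\xi,\eta)=2C_{\psi_\eta}(-\tau,\xi)>0$ by strict convexity of $\psi_\eta$.
\end{itemize}
The key consequence, read off from the proof of \cref{le:maquad}, is the bound
\[
C_{\psi_\eta}(x_n,z_n)\le C_{\psi_\eta}(-\tau,z_n)=\tfrac12 c_\tau(z_n,\eta)\qquad\text{for all } x_n,z_n>-\tau.
\]
I will also use that $\eta\mapsto C_{\psi_\eta}(-\tau,\xi)=\int_0^1(1-t)\,((1-t)\xi-t\tau+\eta)^{-2}\,\dt$ is decreasing in $\eta$.

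\emph{Maj-$\majV$ and Maj-$\majVp$.} Start from the intermediate generator $\widetilde h_{\majO,z}$ built in the proof of \cref{prop:logshift_majorants}. It gives a Bregman tangent majorant on $(-\rho,+\infty)^N$, hence on the smaller set $\mathcal{D}=(-\tau,+\infty)^N$. Its form $(x-z)^\top C_{\widetilde h_{\majO,z}}(x-z)$ is a weighted sum, as in \eqref{e:needforquad}, of the terms $C_{\psi_{\zeta_m b_m}}(x_n,z_n)(x_n-z_n)^2$. Since $\zeta_m b_m\ge\rho>\tau$, the bound above applies with $\eta=\zeta_m b_m$. Each term is therefore at most $\tfrac12 c_\tau(z_n,\zeta_m b_m)(x_n-z_n)^2=C_{h}(x,z)$ evaluated for the corresponding quadratic piece. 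Summing yields $\widetilde h_{\majO,z}\preceq h_{\majV,z}$, and \cref{lemma:bregmajorder} gives Maj-$\majV$.

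Next, monotonicity in $\eta$ together with $\zeta_m b_m\ge\rho$ gives $c_\tau(z_n,\zeta_m b_m)\le c_\tau(z_n,\rho)$. Hence $a_{\majV,n}(z)\le a_{\majVp,n}(z)$, which is order \ref{prop:ordre_55'}, and Maj-$\majVp$ follows by \cref{lemma:bregmajorder} again.

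\emph{Maj-$\majVII$.} Here I treat each $\ell_m$ directly as $y_m\psi_{b_m}(u)$ with $u=H_m x$. For $x\in\mathcal{D}$ we have $u>-\tau\sum_n H_{m,n}=-\tau/\zeta_m$, and $b_m\ge\rho/\zeta_m>\tau/\zeta_m$. So \cref{le:maquad} applies in $u$ with threshold $\tau/\zeta_m$ and gives
\[
\ell_m(x)\le \ell_m(z)+\langle\nabla\ell_m(z),x-z\rangle+\tfrac{y_m}{2}c\,(H_m(x-z))^2,
\]
where $c$ is the appropriate constant. Cauchy--Schwarz with weights $H_{m,n}$ then gives
\[
(H_m(x-z))^2\le \zeta_m^{-1}\sum_n H_{m,n}(x_n-z_n)^2,
\]
which produces the factor $H_{m,n}/\zeta_m$. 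Summing over $m$ via the additivity corollary finishes this item.

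The main obstacle is the precise shift in the constant. The natural argument produces $c$ evaluated with threshold $\tau/\zeta_m$, not the literal $c_\tau(H_m z,b_m)$. I would first check whether the notation is meant with a rescaled threshold, or whether $c_\tau$ dominates that constant; otherwise the statement's $c_\tau$ must be read accordingly.

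\emph{Order \ref{prop:ordre_35}.} If $\zeta_m b_m=\rho$ for all $m$, then $\widetilde h_{\majO,z}=h_{\majO,z}$, and moreover $a_{\majV,n}(z)=a_{\majO,n}(z)\,c_\tau(z_n,\rho)$. I then compare $h_{\majIII,z}$ and $h_{\majV,z}$ componentwise through \cref{e:propBex}\ref{e:propBexii}, splitting into two cases.
\begin{itemize}
\item If $x_n\ge z_n$, then $C_{\varphi_n}=\tfrac{1}{2(z_n+\rho)^2}$. This is at most $C_{\psi_\rho}(-\tau,z_n)$, because the integrand defining the latter is evaluated at points $\le z_n$, where $(u+\rho)^{-2}\ge(z_n+\rho)^{-2}$.
\item If $x_n<z_n$, then $C_{\varphi_n}=C_{\psi_\rho}(x_n,z_n)\le C_{\psi_\rho}(-\tau,z_n)$, by the decreasing-Hessian argument of \cref{le:maquad}.
\end{itemize}
In both cases $C_{\varphi_n}(x_n,z_n)\le\tfrac12 c_\tau(z_n,\rho)$, which gives $h_{\majIII,z}\preceq h_{\majV,z}$.
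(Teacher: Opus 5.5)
\paragraph{Maj-\majV{}, Maj-\majVp{} and the order relations.} For Maj-\majV{}, Maj-\majVp{} and order (i) your argument is the paper's. You apply the endpoint bound $C_{\psi_\eta}(x_n,z_n)\le\frac12 c_\tau(z_n,\eta)$ from \cref{le:maquad} termwise to $\widetilde h_{\majO,z}$ with $\eta=\zeta_m b_m\ge\rho>\tau$, then use that $c_\tau(\xi,\cdot)$ is decreasing. For order (ii), the case $x_n<z_n$ is also handled as in the paper. In the case $x_n\ge z_n$ the paper rewrites $(z_n+\rho)^{-2}\le c_\tau(z_n,\rho)$ as the scalar inequality $2\ln v\le (v-1)(3-v)$ and checks it by a monotonicity argument. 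You instead get $c_\tau(z_n,\rho)=2C_{\psi_\rho}(-\tau,z_n)\ge 2C_{\psi_\rho}(z_n,z_n)=(z_n+\rho)^{-2}$ directly from the decreasing Hessian. This is shorter and shows why the relation holds for every $\tau\in(0,\rho)$ and $z_n>-\tau$.

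\paragraph{Maj-\majVII{}.} The point you leave open is a genuine problem, but it lies in the statement, not in your reasoning, and you should settle it rather than defer it. The paper's proof applies \cref{le:maquad} with threshold $\tau$ to $\psi_{b_m}$ at $H_m x$ and $H_m z$. It never checks that these points lie in $[-\tau,+\infty)$, nor that $b_m>\tau$, and the literal claim is in fact false in general.

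Take $N=M=1$ and $H=(h)$ with $h>1$, so that $\rho=b/h$. Your Cauchy--Schwarz step is then an equality. By \cref{e:propBex}(i), the claim is therefore equivalent to $C_{\psi_b}(hx,hz)\le C_{\psi_b}(-\tau,hz)$ for all $x\in\mathcal{D}$. This fails for $z=0$ and every $x\in(-\tau,-\tau/h)$, since there $hx<-\tau$ and $\ddot\psi_b$ is strictly decreasing. Concretely, $h=b=2$, $y=1$ (so $\rho=1$), $\tau=1/2$, $z=0$, $x=-0.4$ give $\ell(x)\approx-0.182>Q_\ell(x,z)\approx-0.197$. In the opposite regime $\sum_n H_{m,n}<1$, $b_m$ may be $\le\tau$, and then $c_\tau(\cdot,b_m)$ is not even defined.

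The correct statement is exactly what your argument proves. Set $\tau_m=\tau\sum_n H_{m,n}=\tau/\zeta_m$, so that $\tau_m<b_m$ because $\tau<\rho\le\zeta_m b_m$. Then $H_m x>-\tau_m$ on $\mathcal{D}$, \cref{le:maquad} applies with threshold $\tau_m$, and $a_{\majVII,n}(z)=\sum_m y_m\frac{H_{m,n}}{\zeta_m}c_{\tau_m}(H_m z,b_m)$ gives a Bregman tangent majorant. Since $c_\tau(\xi,\eta)=2\int_0^1(1-t)\bigl((1-t)\xi-t\tau+\eta\bigr)^{-2}\dt$ is increasing in $\tau$, the stated formula with $c_\tau(H_m z,b_m)$ remains valid by \cref{lemma:bregmajorder} whenever $\sum_n H_{m,n}\le 1$ and $b_m>\tau$ for every $m$. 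This covers, for instance, a row-normalized $H$.
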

\begin{proof}
\begin{description}        
    \item[(\textbf{Maj-$\majV$})] According to \eqref{e:needforquad}, for every $x\in (-\rho,+\infty)^N$,
    \begin{equation}
        (x-z)^\top C_{\widetilde{h}_{\majO,z}}(x-z)
        = \sum_{m=1}^M \sum_{n=1}^N y_{m}\frac{H_{m,n}(z_{n}+\zeta_{m}b_{m})}{H_m z+b_{m}} 
        (x_{n}-z_{n})^2 C_{- \ln(\cdot+\zeta_{m}b_{m})} (x_{n},z_{n}) .
    \end{equation}
    For every $m\in \llbracket 1,M\rrbracket$, set $\psi = - \ln(\cdot+\zeta_{m}b_{m})$. Since 
    $\ddot{\psi} = (\cdot+\zeta_{m}b_{m})^{-2}$ is decreasing and $\tau<\rho$, it follows from \cref{le:maquad} 
    that, for every $x\in\mathcal{D}$, $n\in \llbracket 1,N\rrbracket $,
    \begin{equation}\label{e:maqquad}
        C_{- \ln(\cdot+\zeta_{m}b_{m})} (x_{n},z_{n})  \le \frac12 c_\tau(z_{n},\zeta_{m}b_{m}).
    \end{equation}
    We deduce from the previous two relations that
    \begin{align}
        (x-z)^\top C_{\widetilde{h}_{\majO,z}}(x-z) \le 
        \frac12 \sum_{n=1}^N a_{\majV,n}(z)  (x_{n}-z_{n})^2 
        = &
        (x-z)^\top C_{h_{\majV,z}}(x-z).
    \end{align}
    Thus $\widetilde{h}_{\majO,z} \preceq h_{\majV,z}$, and a Bregman tangent majorant of $\ell$ at $z$ is associated with $h_{\majV,z}$ (by Lemma~\ref{lemma:bregmajorder}).
\item[(\textbf{Maj-$\majVp$})]  Based on the proof of \cref{le:maquad}, for every $n\in \llbracket 1,N\rrbracket $ and $m\in \llbracket 1,M\rrbracket$,
    \begin{align}
        c_\tau(z_{n},\zeta_{m}b_{m}) 
        = 2\,C_{- \ln(\cdot+\zeta_{m}b_{m})}(-\tau,z_{n})
        & 
        = 2\int_{0}^1 \frac{1-t}{((1-t)z_{n}-t\tau+\zeta_{m}b_{m})^2}\,\dt
        \\
        &\le 2\int_{0}^1 \frac{1-t}{((1-t)z_{n}-t\tau+\rho)^2}\,\dt = c_\tau(z_{n},\rho).
    \end{align}
    We deduce  that
    \begin{align}
        (x-z)^\top C_{h_{\majV,z}}(x-z)= &\; \frac12 \sum_{n=1}^N a_{\majV,n}(z)  (x_{n}-z_{n})^2 \nonumber\\
        \le &\; \frac12 \sum_{n=1}^N a_{\majVp,n}(z)  (x_{n}-z_{n})^2 
        = (x-z)^\top C_{h_{\majVp,z}}(x-z).
    \end{align}
    Thus $h_{\majV,z} \preceq h_{\majVp,z}$, showing \ref{prop:ordre_55'}, and a Bregman tangent majorant of $\ell$ at $z$ is associated with $h_{\majVp,z}$ (by \cref{lemma:bregmajorder}).
\item[(\textbf{Maj-$\majVII$})]  For every $x\in \mathcal{D}$ and for every $m\in \llbracket 1,M\rrbracket$,
    \begin{align}
        (x-z)^\top C_{\ell_m}(x,z) (x-z)\nonumber
        = &\; y_{m}\int_{0}^1 
        \frac{(1-t)\, \left(H_m
        (x-z)\right)^2}{(H_m
        ((1-t)z+t x)+b_{m})^2}\,
        \dt\nonumber\\
        = &\; y_{m}\left(H_m
        (x-z)\right)^2 C_{- \ln(\cdot+b_{m})} (H_m x,H_m z).
    \end{align}
    It follows from \cref{le:maquad} (set $\psi = - \ln(\cdot+b_{m})$)
    that
    \begin{align}
        &(x-z)^\top C_{\ell_m}(x,z) (x-z)
        \le 
        \frac{y_{m}}{2}\left(H_m
        (x-z)\right)^2 c_\tau(H_m z,b_{m}).
    \end{align}
    Applying Jensen's inequality yields
    \begin{align}
        (x-z)^\top C_{\ell_m}(x,z) (x-z)
        \nonumber
        \le &\; \frac{y_{m}}{2} \left(\sum_{n'=1}^N H_{m,n'}\right)^2
        \left(\sum_{n=1}^N \frac{H_{m,n}(x_{n}-z_{n})}{\sum_{n'=1}^N H_{m,n'}}\right)^2 c_\tau(H_m z,b_{m})\nonumber\\
        \le &\;  \frac{y_{m}}{2}\left(\sum_{n'=1}^N H_{m,n'}\right) c_\tau(H_m z,b_{m})  \sum_{n=1}^N H_{m,n} (x_{n}-z_{n})^2.
    \end{align}
    We deduce that
    \begin{align}
        (x-z)^\top C_{\ell}(x,z) (x-z)
        \le
        &\;  \frac12\sum_{n=1}^N a_{\majVII,n}(z) (x_{n}-z_{n})^2
        =
        (x-z)^\top C_{h_{\majVII,z}}(x,z) (x-z),
    \end{align}
    and the result follows from \cref{e:propBex}\ref{e:propBexi}. 

     Proof for \ref{prop:ordre_35}, analogous to that of \ref{prop:ordre_55'}, can be found in supplementary material \cref{annex:proof35}.
    \end{description}
\end{proof}

\subsubsection{Relation with majorant construction in the literature}
\label{sec:maj_litt}

There is a significant amount of literature addressing the construction of separable majorant functions for imaging inverse problems under Poisson noise, and more generally for Kullback-Leibler minimization problems. This methodology likely originates from the EM-based approach~\cite{Dempster1977} which gave rise to the ML-EM method \cite{SheppVardi}, equivalently known as the Richardson-Lucy algorithm \cite{Richardson1972,Lucy1974}, for minimizing the Poisson data fidelity term under positivity constraints. Convergence proofs for this algorithm can be found in \cite{Natterer2001, Bertero2022}. The majorant function \textbf{Maj}-$\majVI$ is identical to the one used to build the aforementioned ML-EM algorithm, and our proof follows the same construction mechanism as the one detailed for instance in \cite{DePierro1993,Bertero2022}. Due to its form, which includes a logarithmic function, this majorant function promotes positivity of the iterates of the resulting MM scheme. It has subsequently been used in many works in the field of image restoration: for instance, let us cite \cite{Wang2012} for maximum-a-posteriori PET image reconstruction, and \cite{Gong2019,Mehranian2021} for AI-based PET reconstruction methods. As emphasized in several works, including \cite{BONETTINI2021113192,Zanella_2009,Lanteri2002}, the construction of the majorant function \textbf{Maj}-$\majVI$ is also tightly linked to split gradient algorithms, that propose multiplicative updates for positivity-constrained optimization problems. Similar majorant constructions are also at the core of the alternating multiplicative algorithms for nonnegative matrix factorization \cite{lee99,FevotteBetaDiv}. The approach we use in \textbf{Maj}-$\majIII$, decomposing the majorizing terms depending on derivative signs, was also employed in \cite{ChouzenouxBarrier} in the context of barrier function majorization and in \cite{BONETTINI2021113192} for building accelerating variable metrics in proximal algorithms. \reviewchanges{Majorant \textbf{Maj}-\majII{}, based on a constant logarithmic metric, recovers an instance of the Bregman Proximal Gradient (BPG) algorithm.}
Paraboloidal (i.e., quadratic) surrogates for the Poisson data fidelity term, such as those presented in \cref{prop:quadratic_majorants}, were originally proposed  in \cite{Fessler1998} and applied to PET dynamic imaging \cite{Wang2009}. The quadratic majorant \textbf{Maj}-$\majV$ is retrieved by applying the same technique as in \cite{Fessler1998} after Jensen's inequality. 
\reviewchanges{The iteration-dependent quadratic metrics associated with majorants \textbf{Maj}-\majVp{} and \textbf{Maj}-$\majV$ naturally yield variable metric forward-backward schemes, similarly to \cite{ChouzenouxPesquetRepetti2015,BONETTINI2021113192}.}
Majorant \textbf{Maj}-$\majVII$ can be traced back to the separable paraboloid algorithm of \cite{Wang2009}. Similar constructions for quadratic separable surrogates have also been used to approximate non-separable regularization terms, for instance in a patch-based regularization in PET imaging \cite{Wang2012}. 

The Log-shift majorant \textbf{Maj}-$\majO$ from \cref{prop:logshift_majorants} was introduced in \cite{FesslerHero1995} to derive a penalized ML-EM algorithm accounting for a non-negative background term, building upon the statistical framework of generalized EM~\cite{Fessler1993}. The Log-shift construction was shown in \cite{FesslerHero1995} to yield significant acceleration of the reconstruction algorithm compared to the Log-0 majorant \textbf{Maj}-$\majVI$. From a statistical viewpoint, the Log-shift approach exploits a more informative complete-data space than the Log-0 majorant, for the estimates related to background events.

In a nutshell,  our constructive approach allows us to recover many majorants found in the literature. \reviewchanges{To the best of our knowledge, this section presents the first unified and constructive presentation of the many majorizing constructions, including new ones, by relying on the framework of Bregman divergence. As a consequence of our analysis, the convergence theorem from \cref{sec:convergence-analysis} can be applied to several existing ad hoc MM methods for Poisson imaging, and allows us to build convergent extensions of those incorporating non-convex and non-smooth penalty terms}. Moreover, our propositions provide the first formal comparison of the various majorant approximations, thanks to the introduced Bregman-based order relation.

\section{Application to PET Reconstruction}
\label{sec:Appli-PET}
Building on the theoretical VBMM framework and Bregman Majorizing techniques presented in previous sections, we now address the challenging problem of Positron Emission Tomography (PET) image reconstruction.

    \subsection{Problem statement} 
    
In PET, the goal is to recover an image of the radiotracer activity concentration $x \in [0,+\infty)^N$ from a measured noisy sinogram $y \in \N^M$, which is modeled by a Poisson inverse problem. The forward model reads
\begin{equation}\label{model:inverse-poisson}
    (\forall m \in \llbracket 1,M\rrbracket) \quad y_m = \mathcal{P}(H_m \overline{x} + b_m)
\end{equation}
where $H = (H_m)_{1\leq m\leq M}\in \R^{M \times N}_{+}$ is the system matrix describing the PET scanner's geometry and physics, \(b = (b_m)_{1\leq m \leq M}\in (0,+\infty)^M\) is a strictly positive background term accounting for factors like scattering and random coincidences, and \(\mathcal{P}(\cdot)\) denotes the Poisson noise corruption process, stemming from the stochastic nature of the detected photon counts. Reconstruction of $\overline{x}$ from  $y=(y_m)_{1\le m \le M}$ can be formulated as an optimization problem in the general framework \eqref{eq:mainpb}, by setting 
\begin{equation}
(\forall x \in \R^N)\quad
    f(x) = \mathcal{L}(x) + \mathcal{R}(x) \quad \text{and} \quad g(x) = \iota_{\mathcal{D}_0}(x) 
    \label{eq:recons_tep_in_kl_framework}
\end{equation}
where $\mathcal{L}$,  defined on its domain
as $\mathcal{L}\colon x \mapsto \textsc{KL}(y,Hx+b) = -\sum_{m=1}^M (y_{m}\ln (H_m x + b_{m}) - H_m x)$,
is the Kullback-Leibler divergence between the predicted sinogram \(Hx+b\) and the observed data $y$, \(\mathcal{R}(x)\) is a regularization term enforcing the smoothness of the image, and $\mathcal{D}_0= \dom(g) = [\epsilon_0,+\infty)^N$ with $\epsilon_0\geq0$ incorporates the positivity constraint. We opt for a non-convex penalty function for $\mathcal{R}$, to enforce contrast in the restoration of PET piecewise-constant activity \cite{mmsubspaceL2L0}:
    \begin{equation}
        (\forall x \in \R^N) \quad \mathcal{R}(x) = \lambda \sum_{n=1}^N \vartheta \left(\|[\Delta x]_n\| \right)+\frac{\varepsilon}{2}\|x\|^2
    \label{eq:general_form_R}
    \end{equation}
    where $\vartheta : t \mapsto t^2/(2\delta^2+t^2)$ is the {Geman-McClure} potential, $\|\cdot\|$ denotes the Euclidean norm, $\Delta$ is the 2D discrete gradient operator such that for each pixel $n \in \llbracket 1,N\rrbracket$, $[\Delta x]_n=([\Delta_{\mathrm{h}} x]_n \; [\Delta_{\mathrm{v}} x]_n) \in \mathbb{R}^2$ (horizontal and vertical finite differences), and $(\lambda,\delta,\varepsilon) \in (0,+\infty)^3$.  
    
Function $F = f+g$ admits minimizers in $\mathcal{D}_0$ by Weierstrass' theorem as $f$ is lower-semicontinuous and coercive. 
{Let $\rho$ be defined as in  \cref{subsec:family-maj}. 
\cref{ass:A1:i} is met for any open set $\mathcal{D}=(-\tau, +\infty)^N$ such that $\tau \leq \rho$.
\cref{ass:A1:ii} is met as soon as $\mathcal{D}_0 \subset \mathcal{D}$, i.e. $\epsilon_0 > - \tau$,
which is obviously satisfied if $\tau > 0$.
Furthermore, function $F= f+g$ verifies
Assumption \ref{ass:A-KL} (K\L{} conditions). Indeed, the function $\mathcal{L}$ 
is definable in the o-minimal structure $\R_{\rm an,\exp}$. $\mathcal{R}$ is also lower-semicontinuous definable in this structure as the potential $\vartheta$ is analytic, and the quadratic term $\frac{\varepsilon}{2}\Vert x \Vert^2$ is semi-algebraic. Finally, $\mathcal{D}_0$ is characterized by $N$ affine inequalities, therefore is a semi-algebraic set.

In order to apply \cref{alg:VBMM}, and benefit from \cref{th:generalcv}, there remains to define suitable Bregman tangent majorants for $f$ that satisfy Assumptions \ref{ass:A2} and \ref{ass:A3}.
    
\subsection{Problem resolution}    
    We now  apply \cref{alg:VBMM} to perform PET reconstruction. For convenience, we focus on separable majorants associated with Legendre functions, for which the minimizer has a closed form expression. 
    Making use of the additivity property of the Bregman tangent majorants, we can majorize $\mathcal{L}$ and $\mathcal{R}$ separately. 

        \subsubsection{Majorizing \texorpdfstring{$\mathcal{R}$}{R}}
        \label{ssec:majR}
        Function $\mathcal{R}: \R^N \to \R$ \eqref{eq:general_form_R} is  differentiable on $\R^N$
        and $\nabla R$ is
        Lipschitzian on $\R^N$ with constant 
        $L_{\mathcal{R}}= \frac{\lambda}{\delta^2} \spnorm{\Delta}^2 + \varepsilon$, where $\spnorm{\cdot}$ is the operator norm
      (see  \cref{annexe:condition_lip}). 
      Hence, at any $z \in \R^N$, using the descent lemma, we have for any $M_\mathcal{R} \geq L_{\mathcal{R}}$,
       \begin{equation}
        (\forall x \in \R^N) \quad \mathcal{R}(x)\leq \mathcal{R}(z) + \langle \nabla \mathcal{R}(z),x-z\rangle + \frac{M_\mathcal{R}}{2}\Vert x-z \Vert^2 \triangleq Q_\mathcal{R}(x,z),
        \label{eq:descent_lemma}
        \end{equation}
        where it can be noticed that $Q_\mathcal{R}$ is a Bregman tangent majorant of $\mathcal{R}$ at $z$, associated with the Legendre function $\frac{M_\mathcal{R}}{2}\|\cdot\|^2$.

\subsubsection{Majorizing \texorpdfstring{$\mathcal{L}$}{L}} \label{sec:majL}
It can be noticed that
$
        (\forall x \in \mathcal{D})\quad \mathcal{L}(x) = \ell(x) + \sum_{m=1}^{M}H_m x
$
where function $\ell$ is defined 
in \eqref{def:notation_somme_log_hx_b}.
    By additivity, 
    we can derive a Bregman tangent majorant (\cref{def:BregmanMaj}) for $\mathcal{L}$ using the same Legendre function as for $\ell$:
\begin{align}
\label{eq:general_form_QL_maj}
    (\forall x \in \mathcal{D}) \quad
    Q_\mathcal{L}(x,z) &= \mathcal{L}(z) + \langle \nabla \mathcal{L}(z), x-z \rangle + D_{h_z}(x,z) \geq \mathcal{L}(x).
\end{align}
\reviewchanges{Function $\mathcal{L}$ is $L_\mathcal{L}$-Lipschitz differentiable with
\begin{equation}
    L_\mathcal{L} = \spnorm{H}^2 \underset{1 \leq m \leq M}{\text{max}} \frac{y_m}{b_m^2}\label{eq:LipLoss}
\end{equation}
(see \cref{annexe:condition_lip} for detailed calculations), so a simple choice would be to build $Q_\mathcal{L}$ as a quadratic tangent majorant with constant curvature, associated with the Legendre function $\frac{L_\mathcal{L}}{2}\| \cdot \|^2$. This choice is later referred to as \textbf{Maj}-Lip. Several other constructions of separable Bregman tangent majorants for our data fidelity term $\mathcal{L}$ can be obtained, leveraging the majorization results for $\ell$ from \cref{subsec:family-maj}. Among those, we will compare (Log-shift)~\textbf{Maj-\majO} \eqref{eq:h0}, \textbf{Maj-\majII}~\eqref{eq:a2}, and \textbf{Maj-\majVIII} \eqref{eq:h8} with $\mu=\rho$ 
and $\mathcal{D} = (-\rho,+\infty)^N$, (Log-0) \textbf{Maj}-\majI~\eqref{eq:h1}, and \textbf{Maj}-\majVI~\eqref{eq:majh6}
with $\mathcal{D} = (0,+\infty)^N$, and (Quadratic) \textbf{Maj}-\majVp~\eqref{eq:h5p} and \textbf{Maj}-Lip~\eqref{eq:LipLoss},~
with $\mathcal{D} = (-\rho,\infty)^N$. 
The rationale for our selection is the following. Within each family, we selected the Legendre functions with minimal computational cost, and best approximation accuracy (relying on the order relations  between majorants established in \cref{subsec:family-maj}). Namely,  \textbf{Maj-\majV} $\,$ was discarded, as it requires $M$ backprojections at each iteration, as opposed to \textbf{Maj}-\majVp~(see \cref{tab:operations} in \cref{annex:nbproj}). \textbf{Maj}-\majIII~was also discarded as it 
provides less accurate
majorizations than \textbf{Maj}-\majVIII~(see \cref{prop:listmajviii}). Though theoretically less accurate than \textbf{Maj}-\majVIII~(resp. \textbf{Maj}-\majVI), we kept \textbf{Maj}-\majO~(resp. \textbf{Maj}-\majI) in our benchmarks because these are used in state-of-the-art Poisson image reconstruction techniques \cite{FesslerHero1995}. Our choices also intend to perform an ablation study, namely constant metrics \textbf{Maj}-\majII~and \textbf{Maj}-Lip~versus varying metrics; quadratic metrics \textbf{Maj}-\majVp~\eqref{eq:h5p} and \textbf{Maj}-Lip~versus logarithmic ones. Finally, we emphasize that, when using the quadratic metrics, our algorithm reduces to a projected gradient method (with \textbf{Maj}-Lip~\eqref{eq:LipLoss}), and a variable metric forward-backward scheme \cite{ChouzenouxPesquetRepetti2015,BONETTINI2021113192} (with \textbf{Maj}-\majVp~\eqref{eq:h5p}), while the constant logarithmic metric \textbf{Maj}-\majII~amounts to implementing the Bregman proximal gradient algorithm~\cite{bauschke_descent_2017}. For the aforementioned choices of majorant functions and sets $\mathcal{D}$, \cref{ass:A2} is satisfied on a domain compatible with \cref{ass:A1:i}.}

\subsubsection{Final algorithm}

We now deduce Bregman tangent majorants of $f=\mathcal{L}+\mathcal{R}$ combining \eqref{eq:descent_lemma} and \eqref{eq:general_form_QL_maj}. For all $z \in \mathcal{D}$,
\begin{align}
    (\forall x \in \mathcal{D}) \quad Q_f(x,z) &= Q_\mathcal{L}(x,z) + Q_\mathcal{R}(x,z) \\
    &= f(z) + \langle \nabla f(z),x-z\rangle + D_{h_z}(x,z)+ \frac{M_\mathcal{R}}{2}\Vert x-z \Vert^2 
    \label{eq:Qf_withdiv}
\end{align}
with $(h_z,\mathcal{D})$ one of the seven settings 
mentioned earlier. \reviewchanges{It may be noticed that $Q_f$ is a Bregman tangent majorant of $f$ at $z$ associated with the Legendre function $x\mapsto h_z(x) + \frac{M_\mathcal{R}}{2}\|x\|^2$, where $h_z$ depends on our loss majorant selection in \cref{sec:majL}.}
By \eqref{eq:general_form_QL_maj}, we have for every $k \in \mathbb{N}$, $Q_f(x,x^k)-f(x) \geq Q_{\mathcal{R}}(x,x^k)-\mathcal{R}(x)$. Moreover, by the Lipschitz property of $\nabla \mathcal{R}$ and majorization ~\eqref{eq:descent_lemma}, for every $k \in \mathbb{N}$, $Q_{\mathcal{R}}(x,x^k)-\mathcal{R}(x) \geq \frac{1}{2}(M_{\mathcal{R}}-L_{\mathcal{R}}) \|x - x^k\|^2$. Therefore, it is sufficient to set $M_{\mathcal{R}} > L_{\mathcal{R}}$ for \cref{ass:A3:i} to be satisfied 
 with $\underline{\gamma} = M_{\mathcal{R}}-L_{\mathcal{R}}>0$. 
Moreover, as shown in \cref{annexe:condition_lip}, {$\nabla f$ is Lipschitz continuous on $[0,+\infty)^N$, and $-\nabla h_z$ is Lipschitz continuous on any compact subset of $\mathcal{D}_0$, hence \cref{ass:A3:ii} holds.

Let $x^0 \in \mathcal{D}_0 \subset \mathcal{D}$. \cref{alg:VBMM} then reads
\begin{align}
    (\forall k\in\mathbb{N}) \quad x^{k+1} &= \operatorname*{argmin}_{x \in \mathbb{R}^N} \left(Q_f(x,x^k) + \iota_{\mathcal{D}_0}(x)\right) = \operatorname*{argmin}_{x\in \mathcal{D}_0} \left(Q_f(x,x^k)\right).\end{align}
Our choices of $h_z$ are separable, hence so is the majorant function, i.e., we can rewrite, for every $z \in \mathcal{D}_0$,
$Q_f(\cdot,z)$ in the form: $Q_f(x,z) = \sum_{n=1}^N q_{n}(x_n,z)$.
Hence, since $q(\cdot,z)$ is (strictly) convex and $\mathcal{D}_0 = [\epsilon_0,+\infty)^N$, we have
    \begin{align}
    (\forall k\in\mathbb{N}) \quad x^{k+1} & = \left(\operatorname*{argmin}_{\xi \in [\epsilon_0,+\infty)} q_n(\xi,x^k)\right)_{1 \leq n \leq N} =\left(\operatorname{proj}_{[\epsilon_0,+\infty)} (u_n(x^k))\right)_{1 \leq n \leq N},
    \label{eq:VBMMM_update}
\end{align}
where $\operatorname{proj}_{[\epsilon_0,+\infty)}$ is the projection onto  $[\epsilon_0,+\infty)$, and $u_n\colon \mathcal{D} \to \mathbb{R}\colon z \mapsto 
\operatorname*{argmin}_{\xi \in \mathcal{D}}
q(\xi,z)$.
\cref{table:updates} recaps the expression for the validity domains $\mathcal{D}$, $\mathcal{D}_0$, and the update formulas for $u_n(z)$, for each of the majorants chosen for $\mathcal{L}$. Detailed calculations are provided in \cref{annexe:details_updates}.\\


       \begin{table}[htb!]
            \centering
            \footnotesize
            \renewcommand{\arraystretch}{2.5} 
            \resizebox{0.9\linewidth}{!}{%
            \begin{tabular}{|c|c|c|c|c|c|c|}
            \hline
                Family & \textbf{Maj} & $\mathcal{D}$ & {$\epsilon_0$} & \textbf{Update} components $u_n(z)$ & $d_n(z)$ & $a_n(z)$ \\
                \hline 
                \hline 
                \multirow{3}{*}{Log-shift} & \textbf{Maj-\majO} & \multirow{3}{*}{$(-\rho,+\infty)^N$} & \multirow{3}{*}{0} & \multirow{3}{*}{$\displaystyle\frac{\sqrt{(d_n(z)- M_\mathcal{R} \rho)^2+4 M_\mathcal{R} a_{n}(z)}-d_n(z)-M_\mathcal{R} \rho}{2 M_\mathcal{R} }$} & \multirow{3}{*}{$[\nabla f(z)]_n + \frac{a_{n}(z)}{z_n+\rho} - M_\mathcal{R} z_n$} & \eqref{eq:a0} \\
                 & \reviewchanges{\textbf{Maj-\majII}} &  &  &  &  & \eqref{eq:a2} \\
                 & \textbf{Maj-\majVIII} &  &  &  &  & \eqref{eq:a8} \\
                \hline
                 \multirow{2}{*}{Log-0}& \textbf{Maj-\majI} &  \multirow{2}{*}{$(0,+\infty)^N$} &  \multirow{2}{*}{$0.01$}& \multirow{2}{*}{$\displaystyle\frac{\sqrt{d_n(z)^2+4 M_\mathcal{R} {a_{n}}(z)}-d_n(z)}{2 M_\mathcal{R} }$} & \multirow{2}{*}{$[\nabla f(z)]_n + \frac{a_n(z)}{z_n} - M_\mathcal{R} z_n$} & \eqref{eq:a0} \\
                & \textbf{Maj-\majVI} & &  & & & \eqref{eq:maj6a6}\\
                \hline
                \multirow{2}{*}{Quadratic}& \textbf{Maj-\majVp} & \multirow{2}{*}{$(-\rho,+\infty)^N$} & \multirow{2}{*}{{$0$}} & \multirow{2}{*}{$z_n - \displaystyle\frac{[\nabla f(z)]_n}{a_{n}(z)+M_\mathcal{R}}$} & $-$ & \eqref{eq:a5p}\\                
                & \reviewchanges{\textbf{Maj-Lip}} &  &  &  & $-$ & \reviewchanges{\eqref{eq:LipLoss}} \\
                \hline
            \end{tabular}
            }
            \caption{Compared majorant Legendre functions, with domain $\mathcal{D}$. The proposed Bregman MM update takes the form  \eqref{eq:VBMMM_update}. We provide our choice of $\epsilon_0$ and the expression for all $z = (z_n)_{1 \leq n \leq N}  \in \mathcal{D}$ and $n \in \llbracket 1,N\rrbracket$, of $u_n(z)$ as a function of coefficients $a_n(z)$ and $d_n(z)$. $M_\mathcal{R}$ should be chosen greater than $L_{\mathcal{R}}$. \reviewchanges{Metrics \textbf{Maj-\majII} and \textbf{Maj-Lip} are constant w.r.t. $z$.}
            }
            \label{table:updates}
        \end{table}
        
    \subsection{Numerical results}

Simulations were performed in 2D on 2 slices of a brain phantom\reviewchanges{, denoted slice A and slice B,} of size 128$\times 128$ (pixel size $2.03$mm). \reviewchanges{The phantom was} derived from real $^{18}$F-FDG PET data from a healthy volunteer (with PET signal measured on 100 \reviewchanges{anatomical} regions of interest), \reviewchanges{following the approach of  \cite{analytical_simulator} (see \cref{annex:simulation} for details)}. The reconstruction framework was developed 
using a custom CUDA-accelerated forward and backward projector with configurable block sizes \cite{Rossignol2022}.
The support of $\overline{x}$ is chosen so that the reconstructed pixels belong to a field of view described by a binarized morphological closing of the attenuation image of the phantom (always available in PET imaging). For the quadratic majorant \textbf{Maj-\majVp}, a Taylor expansion was used to evaluate the curvature \eqref{eq:a5p} for components $z_n$ near zero, leading to improved numerical stability. All experiments were performed on a NVIDIA A100-SXM4 with 80 GB of memory. \\

\reviewchanges{Two convergence criteria were considered, namely the iterate relative distance to the asymptotic image $x^\infty$ (obtained in practice by reconstructing with a very large number of iterations) and an optimality criterion, measuring first-order stationarity via the infinity norm of the unit-step projected gradient residual:
\begin{equation}
(\forall k \in \mathbb{N}) \quad G_k := x^k-P_{\mathcal D_0}\bigl(x^k-\nabla f(x^k) \bigr).  
\end{equation}
We also evaluate the computational time, as well as the image quality metrics comparing the restored image (for a given stopping criterion) and the ground truth $\overline{x}$, namely the perceptual index SSIM, NRMSE, PSNR, and CNR. Calculation details for the image quality metrics used are presented in \cref{annex:metrics}. Finally, for comparison purpose, we also display the results of the non penalized ML-EM algorithm \cite{DePierro1993}, obtained by setting $\mathcal{R} \equiv 0$, and $\mathcal{D}_0 = [0,+\infty)^N$ in our framework.
}

\reviewchanges{In \Cref{tab:table_15s,tab:table_120s,tab:kkt_0p001}, we display  the results for the compared algorithms, for three different stopping criteria namely a fixed computational time of 15 and 120 seconds, respectively, and the projected gradient residual norm satisfying $\|G_k\|_\infty \leq 10^{-3}$. We display a $``/"$ symbol for algorithms that do not meet the stopping criteria after $10^6$ iterations. Best results are highlighted in bold. \Cref{fig:exreconstructions_slice71,fig:exreconstructions_slice65} each illustrate\reviewchanges{, for a given phantom slice,} examples of reconstructions obtained with our proposed algorithm, and \reviewchanges{five} different majorants. Finally, \cref{fig:cvprofile} displays convergence profiles obtained for the compared majorants.}


\begin{table}[htb!]
    \centering
    {\scriptsize
    \begin{tabular}{lccccccc}
\toprule
          Majorant &  Nb. it. &  $\|G_k\|_\infty$ &  $\frac{\|x^k-x^\infty\|}{\|x^\infty\|}$ &  NRMSE &   SSIM &    PSNR &    CNR \\
\toprule
            ML-EM &            648 &                     0.0062 &                           0.650 &  0.431 & 0.892 & 16.61 & 1.76 \\
\midrule
  Log-shift Maj-1 &            373 &                     0.0060 &                           0.187 &  0.404 & 0.895 & 17.17 & 1.86 \\
  Log-shift Maj-2 &            645 &                     1.0210 &                           0.465 &  0.711 & 0.631 & 12.26 & 1.24 \\
  Log-shift Maj-4 &            650 &                     \textbf{0.0018} &                           \textbf{0.154} &  0.401 & 0.894 & 17.24 & 1.88 \\
      Log-0 Maj-5 &            374 &                     0.0190 &                           0.215 &  0.409 & 0.893 & 17.06 & 1.85 \\
      Log-0 Maj-6 &            648 &                     0.0064 &                           0.157 &  \textbf{0.400} & \textbf{0.896} & \textbf{17.25} & \textbf{1.89} \\
  Quadratic Maj-8 &            372 &                     0.0137 &                           0.241 &  0.440 & 0.863 & 16.42 & 1.69 \\
Quadratic Maj-Lip &            663 &                     0.0824 &                           0.337 &  0.549 & 0.756 & 14.50 & 1.26 \\
\bottomrule
\end{tabular}
    }
    \caption{Comparison of convergence criteria and image quality metrics after iterating reconstruction for 15 seconds (phantom slice A).}
    \label{tab:table_15s}
\end{table}

 \begin{table}[htb!]
     \centering
     {\scriptsize
     \begin{tabular}{lccccccc}
\toprule
          Majorant &  Nb. it. &  $\|G_k\|_\infty$ &  $\frac{\|x^k-x^\infty\|}{\|x^\infty\|}$ &  NRMSE &   SSIM &    PSNR &    CNR \\
\toprule
            ML-EM &           5182 &                     0.0046 &                           0.356 &  0.827 & 0.803 & 10.94 & 0.83 \\
\midrule
  Log-shift Maj-1 &           2996 &                     0.0009 &                           0.088 &  0.419 & 0.899 & 16.85 & 1.85 \\
  Log-shift Maj-2 &           5094 &                     0.2622 &                           0.291 &  0.558 & 0.735 & 14.35 & 1.39 \\
  Log-shift Maj-4 &           5120 &                     \textbf{0.0001} &                           0.064 &  0.435 & 0.900 & 16.53 & 1.84 \\
      Log-0 Maj-5 &           2993 &                     0.0072 &                           0.111 &  0.423 & \textbf{0.901} & 16.77 & 1.86 \\
      Log-0 Maj-6 &           5141 &                     0.0022 &                           \textbf{0.062} &  0.433 & \textbf{0.901} & 16.55 & 1.84 \\
  Quadratic Maj-8 &           2988 &                     0.0017 &                           0.127 &  \textbf{0.401} & 0.892 & \textbf{17.23} & \textbf{1.88} \\
Quadratic Maj-Lip &           5263 &                     0.0088 &                           0.195 &  0.444 & 0.857 & 16.33 & 1.64 \\
\bottomrule
\end{tabular}
     }
     \caption{Comparison of convergence criteria and image quality metrics after iterating reconstruction for 120 seconds (phantom slice A).}
     \label{tab:table_120s}
 \end{table}

\begin{table}[htb!]
    \centering
    {\scriptsize
    \begin{tabular}{lccccccc}
\toprule
          Majorant & Time (s) & Nb. it. &  $\frac{\|x^k-x^\infty\|}{\|x^\infty\|}$ &  NRMSE &   SSIM &    PSNR &    CNR \\
\toprule
              ML-EM &     / &             / &                              / &     / &     / &     / &    / \\
\midrule
  Log-shift Maj-1 & 107.9 &          2693 &                          0.093 & 0.416 & 0.898 & 16.90 & 1.86 \\
  Log-shift Maj-2 &     / &             / &                              / &     / &     / &     / &    / \\
  Log-shift Maj-4 &  \textbf{26.8} &          \textbf{1158} &                          0.131 & 0.405 & 0.892 & 17.15 & \textbf{1.87} \\
      Log-0 Maj-5 &     / &             / &                              / &     / &     / &     / &    / \\
      Log-0 Maj-6 & 271.9 &         11437 &                          \textbf{0.033} & 0.444 & \textbf{0.899} & 16.34 & 1.83 \\
  Quadratic Maj-8 & 190.9 &          4734 &                          0.108 & \textbf{0.403} & 0.890 & \textbf{17.18} & \textbf{1.87} \\
Quadratic Maj-Lip & 787.1 &         33781 &                          0.059 & 0.406 & 0.881 & 17.13 & 1.86 \\
\bottomrule
\end{tabular}
    }
    \caption{Comparison of metrics after iterating reconstruction until reaching 
$\|G_k\|_\infty\leq 10^{-3}$ (phantom slice A).}
    \label{tab:kkt_0p001}
\end{table}

\newcommand{\reconslinewidthratio}{0.17}
\newcommand{\subfighspace}{\hspace{0.1cm}}
\newcommand{\hspacewcbar}{\hspace{0.4cm}}

\begin{figure}[htb!]
    \centering
    \hspace{0.75cm}
    \begin{minipage}[c]{0.15\linewidth}
        \centering
        \subfloat{\label{fig:recons_phantom}%
            \stackunder{\includegraphics[width=2.5cm]{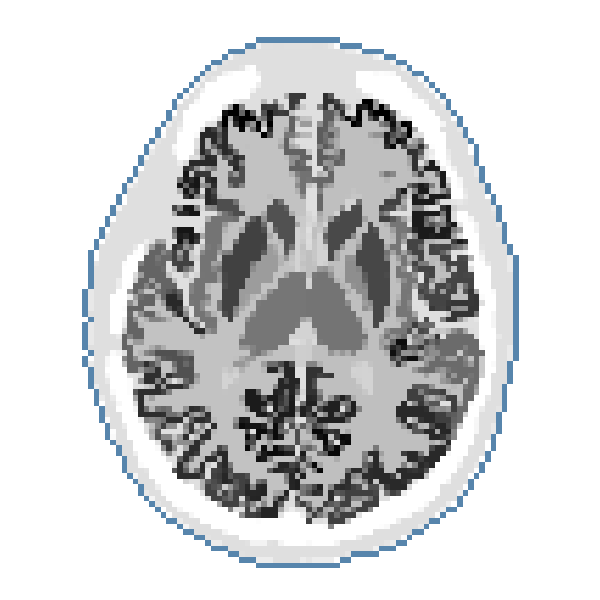}}%
            {\shortstack{{\scriptsize Reference} \\ {\tiny $ $}}}}%
    \end{minipage}%
    \hfill
    \begin{minipage}[c]{0.55\linewidth}
        \centering
        \begin{minipage}[b]{0.3\linewidth}
            \centering
            \subfloat{\label{fig:recons_mm_logshift}%
                \stackunder{\includegraphics[width=2.5cm]{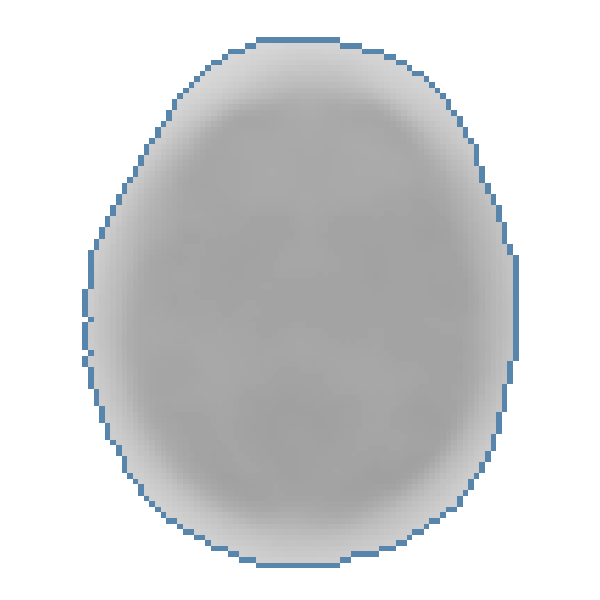}}%
                {\shortstack{{\scriptsize Log-shift \textbf{Maj-}\majII} \\ {\tiny NRMSE:0.711} \\ {\tiny SSIM:0.631}}}}%
        \end{minipage}%
        \hfill
        \begin{minipage}[b]{0.3\linewidth}
            \centering
            \subfloat{\label{fig:recons_mm_logshift2_bpg}%
                \stackunder{\includegraphics[width=2.5cm]{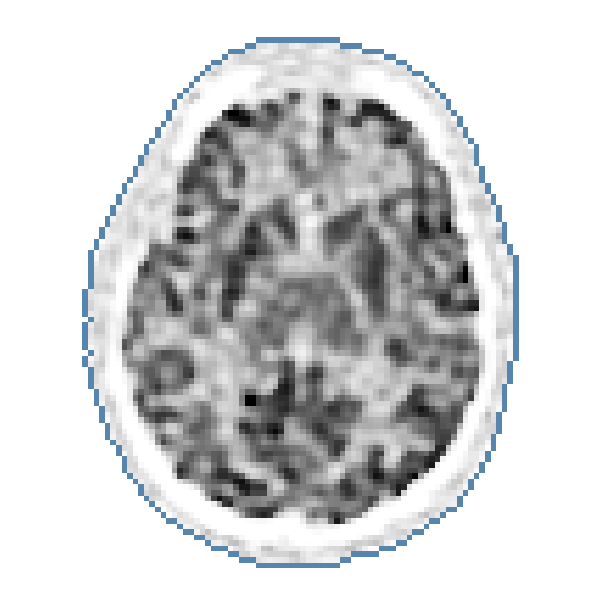}}%
                {\shortstack{{\scriptsize Log-shift \textbf{Maj-}\majVIII} \\ {\tiny NRMSE:0.401} \\ {\tiny SSIM:0.894}}}}%
        \end{minipage}%
        \hfill
        \begin{minipage}[b]{0.3\linewidth}
            \centering
            \subfloat{\label{fig:recons_mm_log0}%
                \stackunder{\includegraphics[width=2.5cm]{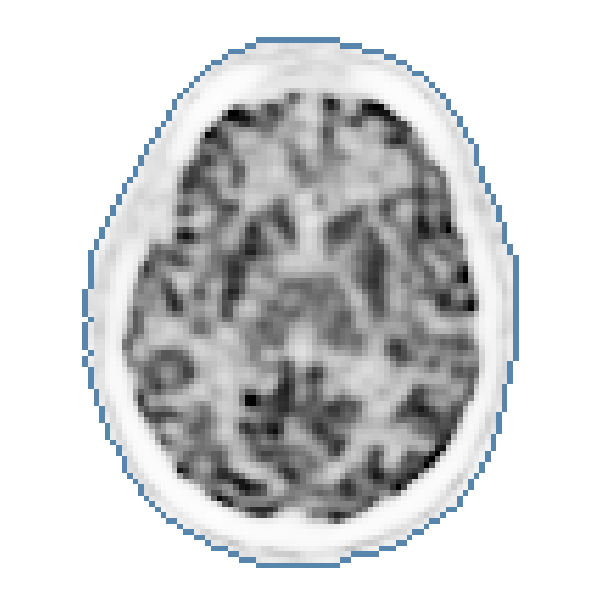}}%
                {\shortstack{{\scriptsize Log-0 \textbf{Maj-}\majI} \\ {\tiny NRMSE:0.409} \\ {\tiny SSIM:0.893}}}}%
        \end{minipage}%

        \vspace{0.25cm}
        \begin{minipage}[b]{0.3\linewidth}
            \centering
            \subfloat{\label{fig:recons_em}%
                \stackunder{\includegraphics[width=2.5cm]{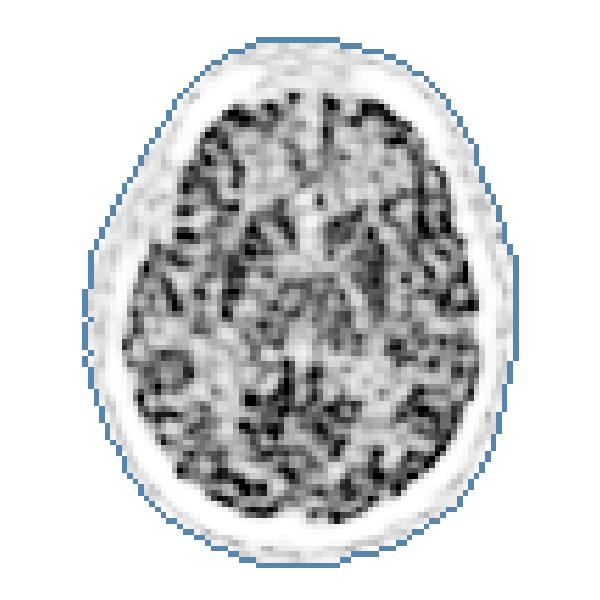}}%
                {\shortstack{{\scriptsize ML-EM} \\ {\tiny NRMSE:0.430} \\ {\tiny SSIM:0.893}}}}%
        \end{minipage}%
        \hfill
        \begin{minipage}[b]{0.3\linewidth}
            \centering
            \subfloat{\label{fig:recons_mm_quad}%
                \stackunder{\includegraphics[width=2.5cm]{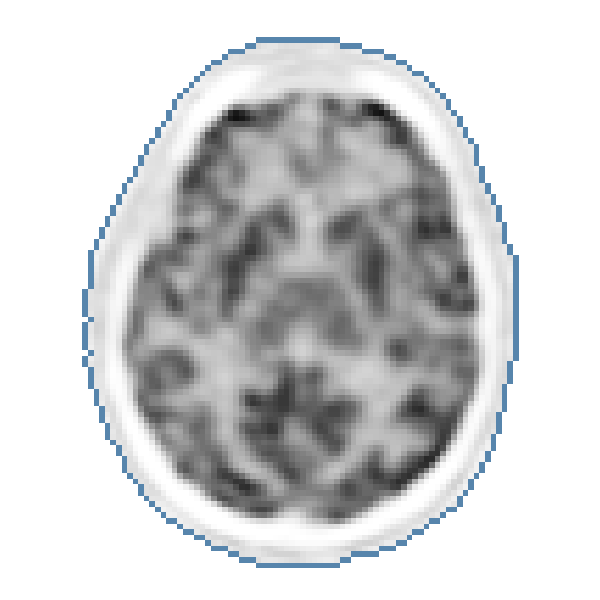}}%
                {\shortstack{{\scriptsize Quadratic \textbf{Maj-}\majVp} \\ {\tiny NRMSE:0.441} \\ {\tiny SSIM:0.863}}}}%
        \end{minipage}%
        \hfill
        \begin{minipage}[b]{0.3\linewidth}
            \centering
            \subfloat{\label{fig:recons_mm_projgrad}%
                \stackunder{\includegraphics[width=2.5cm]{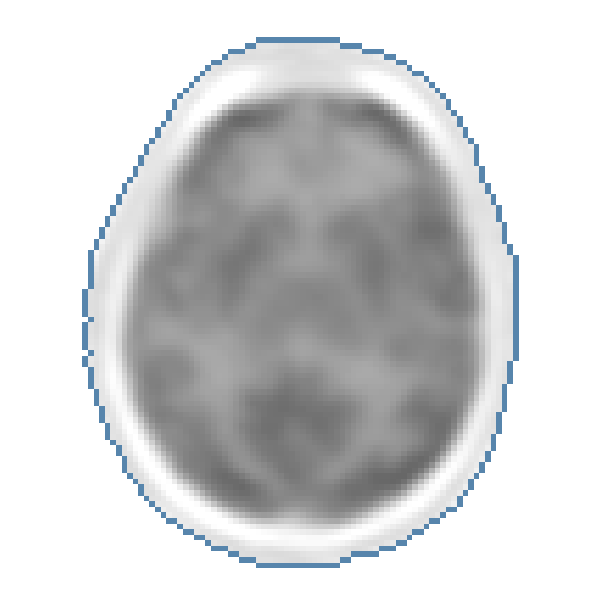}}%
                {\shortstack{{\scriptsize Quadratic \textbf{Maj-}Lip} \\ {\tiny NRMSE:0.550} \\ {\tiny SSIM:0.754}}}}%
        \end{minipage}%

    \end{minipage}%
    \hfill
    \begin{minipage}[c]{0.035\linewidth}
        \centering
        \includegraphics[width=0.55cm]{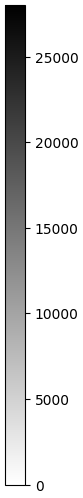}%
    \end{minipage}%
    \hspace{0.75cm}
    \caption{Example of reconstructions of the phantom slice A (left) after iterating for 15 seconds with 6 different methods. Blue lines show the mask contour.}
    \label{fig:exreconstructions_slice71}
\end{figure}
\begin{figure}[htb!]
    \centering
    \hspace{0.75cm}
    \begin{minipage}[c]{0.15\linewidth}
        \centering
        \subfloat{\label{fig:recons_phantom}%
            \stackunder{\includegraphics[width=2.5cm]{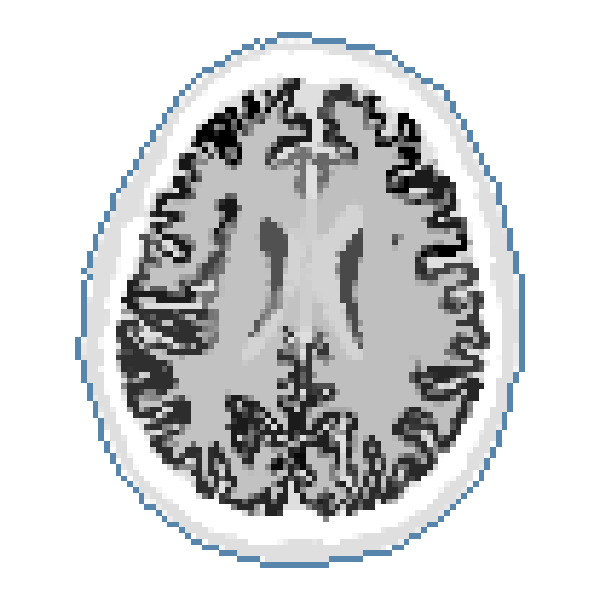}}%
            {\shortstack{{\scriptsize Reference} \\ {\tiny $ $}}}}%
    \end{minipage}%
    \hfill
    \begin{minipage}[c]{0.55\linewidth}
        \centering
        \begin{minipage}[b]{0.3\linewidth}
            \centering
            \subfloat{\label{fig:recons_mm_logshift}%
                \stackunder{\includegraphics[width=2.5cm]{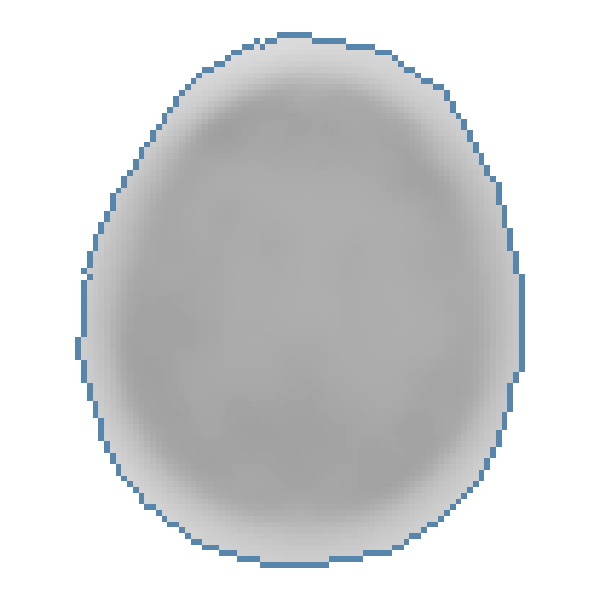}}%
                {\shortstack{{\scriptsize Log-shift \textbf{Maj-}\majII} \\ {\tiny NRMSE:0.792} \\ {\tiny SSIM:0.647}}}}%
        \end{minipage}%
        \hfill
        \begin{minipage}[b]{0.3\linewidth}
            \centering
            \subfloat{\label{fig:recons_mm_logshift2_bpg}%
                \stackunder{\includegraphics[width=2.5cm]{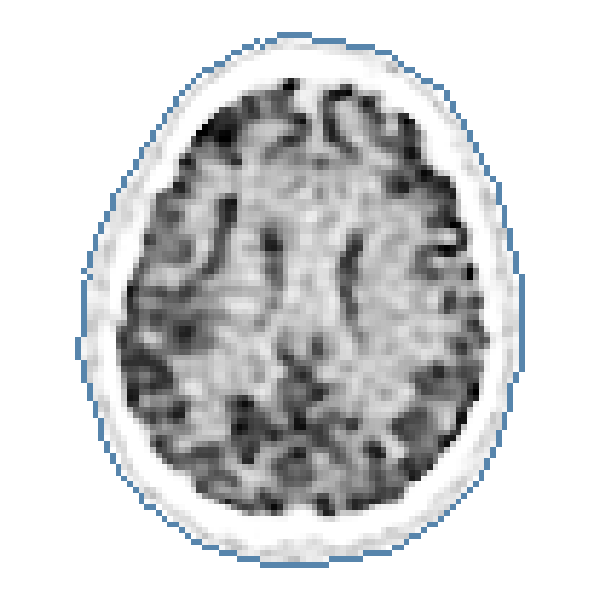}}%
                {\shortstack{{\scriptsize Log-shift \textbf{Maj-}\majVIII} \\ {\tiny NRMSE:0.443} \\ {\tiny SSIM:0.896}}}}%
        \end{minipage}%
        \hfill
        \begin{minipage}[b]{0.3\linewidth}
            \centering
            \subfloat{\label{fig:recons_mm_log0}%
                \stackunder{\includegraphics[width=2.5cm]{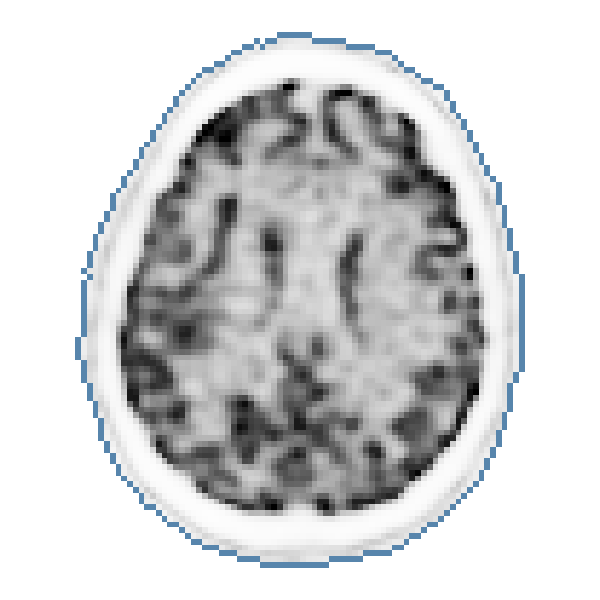}}%
                {\shortstack{{\scriptsize Log-0 \textbf{Maj-}\majI} \\ {\tiny NRMSE:0.454} \\ {\tiny SSIM:0.895}}}}%
        \end{minipage}%

        \vspace{0.25cm}
        \begin{minipage}[b]{0.3\linewidth}
            \centering
            \subfloat{\label{fig:recons_em}%
                \stackunder{\includegraphics[width=2.5cm]{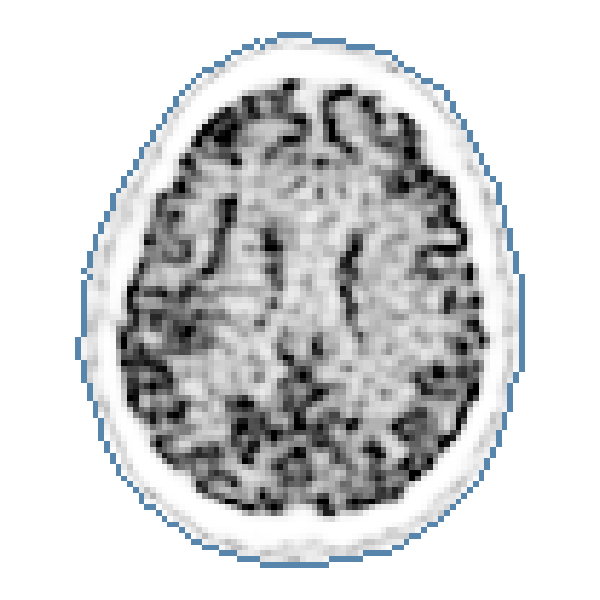}}%
                {\shortstack{{\scriptsize ML-EM} \\ {\tiny NRMSE:0.464} \\ {\tiny SSIM:0.897}}}}%
        \end{minipage}%
        \hfill
        \begin{minipage}[b]{0.3\linewidth}
            \centering
            \subfloat{\label{fig:recons_mm_quad}%
                \stackunder{\includegraphics[width=2.5cm]{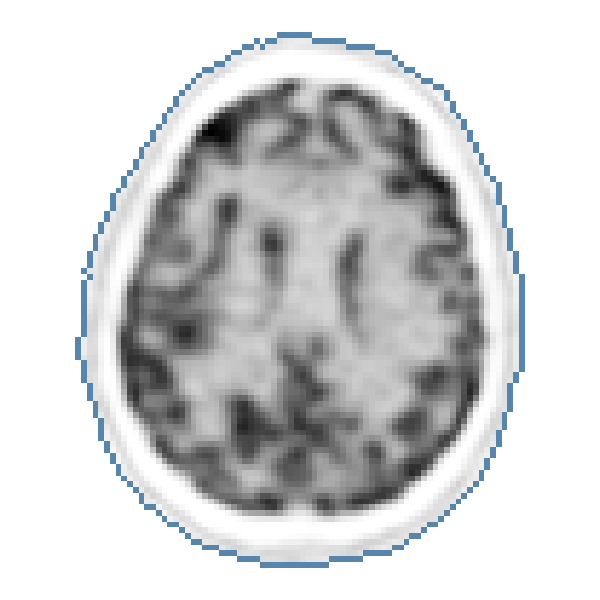}}%
                {\shortstack{{\scriptsize Quadratic \textbf{Maj-}\majVp} \\ {\tiny NRMSE:0.487} \\ {\tiny SSIM:0.871}}}}%
        \end{minipage}%
        \hfill
        \begin{minipage}[b]{0.3\linewidth}
            \centering
            \subfloat{\label{fig:recons_mm_projgrad}%
                \stackunder{\includegraphics[width=2.5cm]{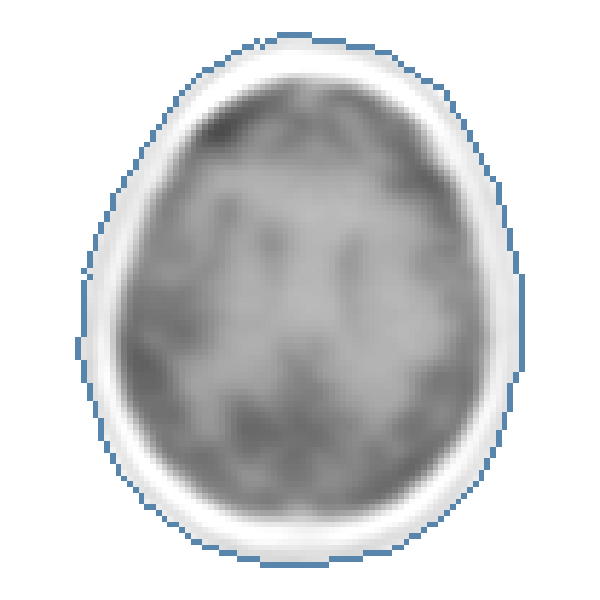}}%
                {\shortstack{{\scriptsize Quadratic \textbf{Maj-}Lip} \\ {\tiny NRMSE:0.615} \\ {\tiny SSIM:0.768}}}}%
        \end{minipage}%
    \end{minipage}%
    \hfill
    \begin{minipage}[c]{0.035\linewidth}
        \centering
        \includegraphics[width=0.55cm]{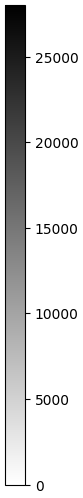}%
    \end{minipage}%
    \hspace{0.75cm}
    \caption{Example of reconstructions of the phantom slice B (left) after iterating for 15 seconds with 6 different methods. Blue lines show the mask contour.}
    \label{fig:exreconstructions_slice65}
\end{figure}


\begin{figure}[htb!]
\centering
    \subfloat{%
        \stackunder{\includegraphics[width=0.48\linewidth]{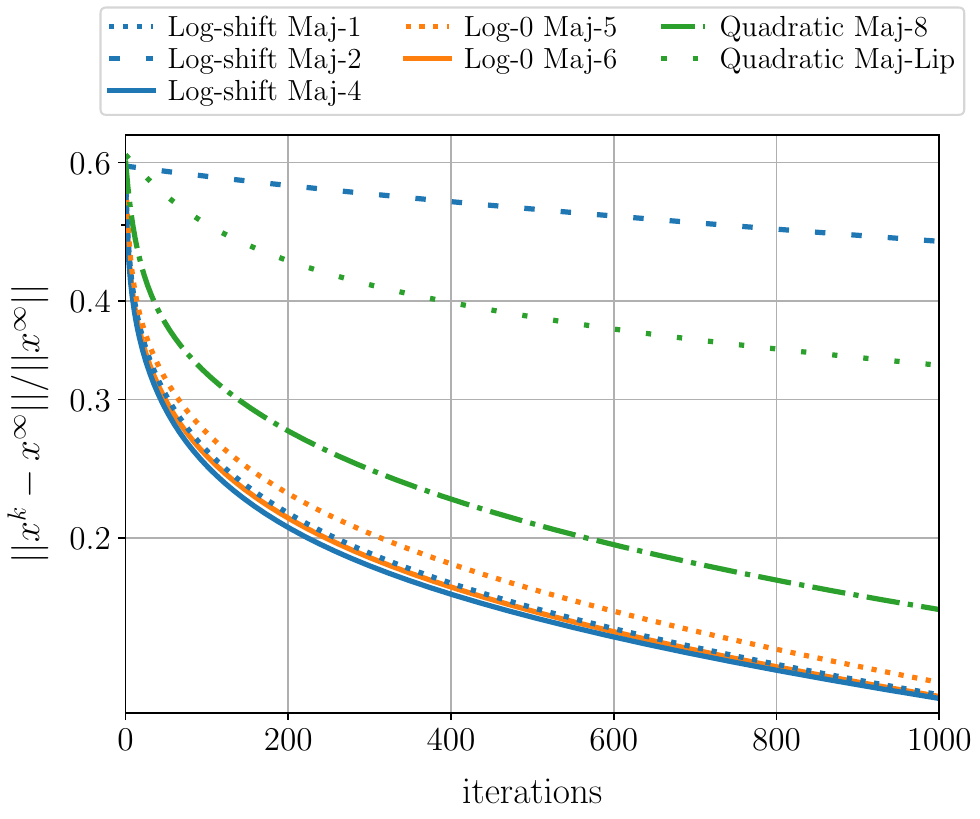}}%
        {{\footnotesize (a)}}}%
    \hfill
    \subfloat{%
        \stackunder{\includegraphics[width=0.48\linewidth]{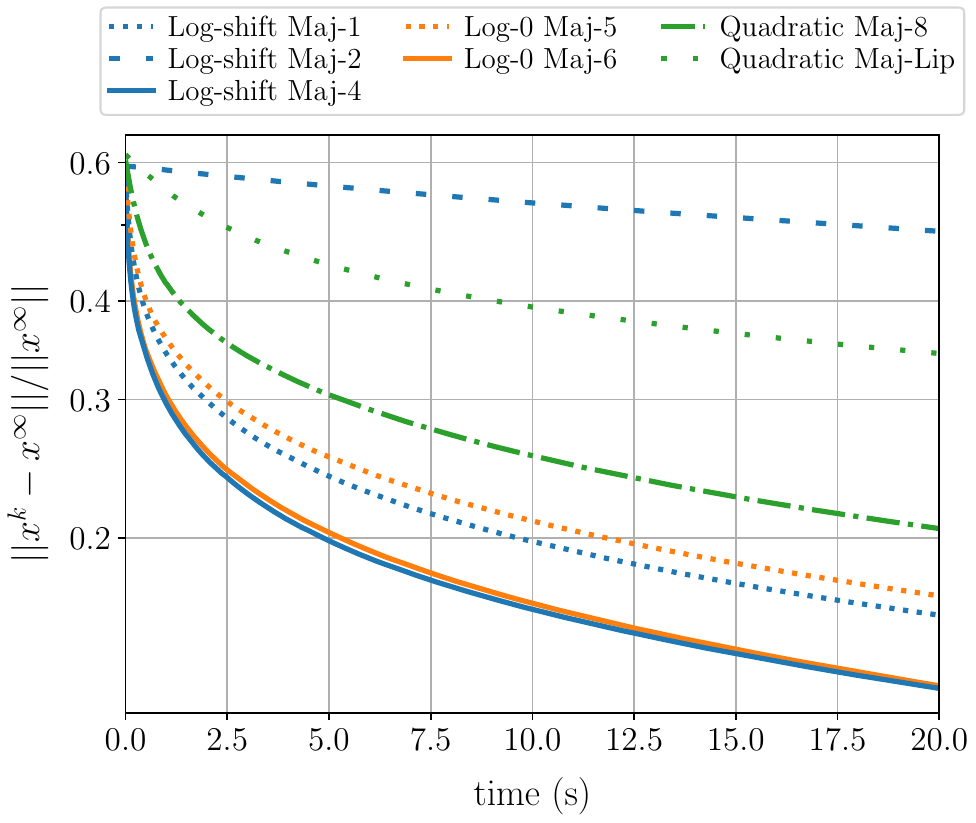}}%
        {{\footnotesize (b)}}}%
\caption{Iterates relative distance over iterations (a) and computational time (b) using the different majorants and our nonconvex Geman-McClure regularization. An approximation of the limit point $x^{\infty}$ was obtained by running each algorithm ten times the number of iterations required to meet the stopping criterion.}
\label{fig:cvprofile}
\end{figure}


\reviewchanges{
For a given computational budget, the variable metric log-shift majorant \textbf{Maj-\majVIII} yields the best reconstructions: higher scores reflect the visible improvements in detail preservation and contrast within low-dose regions. The other majorants within log-shift family have lower performance, in particular  \textbf{Maj-\majII} (instance of BPG) exhibits very slow convergence, probably due to its non-adaptive metric. Images obtained with \textbf{Maj-}Lip (projected gradient) and \textbf{Maj-}$\majII$, which are majorants with constant metrics, barely show the brain features after 15 seconds as the algorithms have very slow convergence on these examples.}    

\reviewchanges{
As shown in
\cref{fig:cvprofile}, quadratic majorants converge significantly slower than those \reviewchanges{with variable metric} from the logarithmic families, which explains the lower scores also visible in the reconstructions displayed in \cref{fig:exreconstructions_slice71,fig:exreconstructions_slice65} obtained after 15 seconds. As expected, tighter iterate-dependent majorants yield faster convergence, hence a better image quality within a shorter computational time, consistently with the tightness order relations between majorants (summarized in \cref{fig:orderrelations}). The differences observed between \cref{fig:cvprofile}(a) and \cref{fig:cvprofile}(b) stem from the fact that majorants \textbf{Maj-\majVIII} and \textbf{Maj-\majVI} require only a single backprojection per iteration instead of two (see \cref{annex:nbproj}), the latter being the most time-consuming step of the algorithm in the tomographic reconstruction setting. The ML-EM relies on an unregularized loss, hence resulting in noisy textures, as confirmed by the poorer quality metrics. To summarize, the experimental study illustrates the essential role of (i) variable metrics, (ii) non-Euclidean Bregman metrics, and (iii) tight majorant constructions for fast and accurate Poisson image reconstruction.}

\section{Conclusion}
In this work, we established new theoretical guarantees for the variable Bregman majorization–minimization algorithm. The proposed proof 
is grounded on general
assumptions which apply to a wide class
of nonsmooth nonconvex objectives. Iterates convergence to a critical point was established under the Kurdyka–\L{}ojasiewicz property. We illustrated the versatility of the proposed framework by deriving constructive rules for Bregman majorant design, and we derived a broad class of separable majorants for Poisson data fidelity term. \reviewchanges{Finally, numerical experiments on a challenging nonconvex Poisson image reconstruction problem demonstrated the practical effectiveness of the proposed framework, highlighting the impact of carefully designed Bregman majorants on both convergence speed and reconstruction quality}. The proposed approach should be applicable beyond PET and allow the development of data-driven strategies grounded on its use. \reviewchanges{Extension of the convergence analysis to stochastic settings \cite{Kereta2021,Twyman2023,Ehrhardt2025}, and ordered subset implementations \cite{Ahn2003}, will be considered as future research avenues.}

\section*{Acknowledgments}
The authors would like to thank Claire Rossignol for her contribution to the early development of this project.

\bibliographystyle{siamplain}
{\small
\bibliography{references}
}



\clearpage
\begin{appendices}

\makeatletter
\let\oldaddcontentsline\addcontentsline
\renewcommand{\addcontentsline}[3]{}
\makeatother

\section{Proof of \cref{prop:log0majorants}} 
\label{annex:prooflog0}
Let $\mathcal{D} = (0,+\infty)^N$ and $z \in \mathcal{D}$. 
    \begin{description}
    \item[(\textbf{Maj-$\majI$})] 
        For every $x\in \mathcal{D}$,
            \begin{equation*}
                (x-z)^\top C_{h_{\majO,z}}(x-z)
                \le \sum_{n=1}^N a_{\majO,n}(z) \int_{0}^1 \frac{(1-t)\, (x_{n}-z_{n})^2}{((1-t) z_n+t x_{n})^2}\,dt
                = 
                (x-z)^\top C_{h_{\majI,z}}(x-z).
            \end{equation*}
            Hence $h_{\majO,z}\preceq h_{\majI,z}$, proving \ref{prop:log0majorants}\ref{prop:ordre_01}, and a Bregman tangent majorant of $\ell$ at $z$ is associated with $h_{\majI,z}$ by Lemma \ref{lemma:bregmajorder}.
        \item[(\textbf{Maj-$\majVI$})] We apply Lemma \ref{le:majlog} to each component $(\ell_m)_{m\in \llbracket 1,M\rrbracket}$ of $\ell$ (set $\psi = \ell_m$, $\theta = H_m$, $\delta_{1}=0$, and $\delta_{2}=b_{m}$). Then, using the additivity property, a Bregman tangent majorant of $\ell$ at $z$ is associated with $h_{\majVI,z}$.       
           Furthermore, let us notice that, for every $n\in \llbracket 1,N\rrbracket$, $a_{\majVI,n}(z) \le a_{\majO,n}(z)$, 
            we have 
            \begin{equation*}
                (\forall x \in \mathcal{D})\quad  (x-z)^\top C_{h_{\majVI,z}}(x-z)\le (x-z)^\top C_{h_{\majI,z}}(x-z),
            \end{equation*}
            which yields $h_{\majVI,z} \preceq h_{\majI,z}$, showing \ref{prop:log0majorants}\ref{prop:ordre_61}.
        \end{description} 

\section{Proof of \cref{prop:quadratic_majorants}\ref{prop:ordre_35}}
\label{annex:proof35}
Let us assume that, for every $m\in \llbracket 1,M\rrbracket$, $\zeta_{m} b_{m}=\rho$.
In view of \eqref{e:majlogquad3}, \eqref{eq:endproofmajIII}, and \eqref{e:maqquad}, we have 
\begin{align*}
    (x-z)^\top & C_{h_{\majIII,z}}(x-z) \\ =& \sum_{n=1}^N a_{\majO,n}(z) \Big(\mathbbm{1}_{x_{n}\ge z_{n}} \left(\frac{1}{(z_{n}+\rho)^2}  C_{(\cdot)^2/2}(x_{n},z_{n})\right)
        + \mathbbm{1}_{x_{n}< z_{n}}\left( C_{-\ln(\cdot+\rho)}(x_{n},z_{n})\right)\Big) (x_{n}-z_{n})^2  \\
        \leq & \sum_{n=1}^N a_{\majO,n}(z) \left(\mathbbm{1}_{x_{n}\ge z_{n}} \left(\frac{1}{(z_{n}+\rho)^2}  C_{(\cdot)^2/2}(x_{n},z_{n})\right)
        + \mathbbm{1}_{x_{n}< z_{n}} \left(\frac{1}{2} c_\tau(z_n,\rho)\right) \right) (x_{n}-z_{n})^2
\end{align*}
and
\begin{equation*}
    (x-z)^\top C_{h_{\majV,z}}(x-z) = \frac12 \sum_{n=1}^N a_{\majV,n}(z)  (x_{n}-z_{n})^2 = \frac12 \sum_{n=1}^N a_{\majO,n}(z)c_\tau(z_n,\rho)  (x_{n}-z_{n})^2.
\end{equation*}

Then, to show that $h_{\majIII,z} \preceq h_{\majV,z}$, it is sufficient to prove that, for every $m\in \llbracket 1,M\rrbracket$ and $n\in \llbracket 1,N\rrbracket $ such that $x_{n}\ge z_{n}$:
\begin{equation}
    \frac{1}{(z_{n}+\rho)^2}  \le c_\tau(z_{n},\rho).
\end{equation}
By using \eqref{e:defcm}, this inequality is equivalent to
\begin{equation}
    2 \ln\Big(\frac{\rho}{\rho+z_{n}}\Big) \le -\frac{z_{n}}{z_{n}+\rho} \Big(2+\frac{z_{n}}{z_{n}+\rho}\Big).
\end{equation} 
By making the variable change
$v = (1+z_{n}/\rho)^{-1} \in (0,1]$, this can be rewritten as
\begin{equation}
2 \ln v \le (v-1)(3-v),
\end{equation}
which can easily be established by noticing that $v \mapsto  (v-1)(3-v) - 2 \ln v$ is a decaying function. Hence the result.

\section{Verifying~\cref{ass:A3:ii} for the example in \cref{sec:Appli-PET}}
\label{annexe:condition_lip}
By \cref{prop:welldef,pro:obj-cv}, for all $k \in \mathbb{N}$, $x^k$ belongs to a compact subset of $\text{dom}\, g = \mathcal{D}_0 = [\epsilon_0,+\infty)^N$. Then, it is enough to prove that, for all $k \in \mathbb{N}$, $\nabla f - \nabla h_{x^k} = \nabla \mathcal{L} + \nabla \mathcal{R} - \nabla h_{x^k}$ is Lipschitz continuous on any compact subset of $[\epsilon_0,+\infty)^N$, with a constant independent from~$k$. 

\paragraph{Lipschitz continuity of $\nabla \mathcal{L}$}
We have
\begin{equation}
    (\forall x \in \mathcal{D}) \quad \nabla \mathcal{L}(x) = \sum_{m=1}^M \left( 1 - \frac{y_m}{H_m x + b_m} \right) H_m^\top
    = H^\top \!\left( 1 - y\oslash (Hx+b) \right)
\end{equation}
where $\oslash$ denotes
the componentwise vector division.
This yields 
\begin{equation}
     (\forall x \in \mathcal{D}) \quad \nabla^2 \mathcal{L}(x) 
    = \sum_{m=1}^M \frac{y_m}{(H_m x + b_m)^2} H_m^\top H_m
    = H^\top \, \mathrm{Diag}\!\left( y\oslash (Hx+b)^{.2} \right) H,
\end{equation}
where $(\cdot)^{.2}$ is the square componentwise operation.
Since $b \in (0,+\infty)^M$, $\epsilon_0 \geq 0$, and $H \in \R^{M \times N}_{+}$, we have 
$(\forall x \in [\epsilon_0,+\infty)^N) \quad (Hx+b)_m \ge b_m > 0$
which yields 
\begin{equation}
    (\forall x \in [\epsilon_0,+\infty)^N) \quad
\spnorm{\nabla^2 \mathcal{L}(x)} 
\leq \spnorm{H}^2 \underset{1 \leq m \leq M}{\text{max}} \frac{y_m}{b_m^2} \triangleq L_\mathcal{L}. 
\end{equation}
This shows that $\mathcal{L}$ has a Lipschitz continuous gradient on $[\epsilon_0,+\infty)^N$ with a Lipschitz constant~$L_\mathcal{L}$.

\paragraph{Lipschitz continuity of $\nabla \mathcal{R}$}
Let us denote $\omega: t \mapsto \dot{\vartheta}(t)/t= (4 \delta^2)/(2 \delta^2 + t^2)^2$. Then, the gradient of the regularization term \eqref{eq:general_form_R} is expressed as
    \begin{equation}
        (\forall x \in \R^N) \quad \nabla \mathcal{R}(x) = \lambda \,  \Delta^\top  \text{Diag}\left(\Big(\omega(\| [\Delta x]_n \|) 
        {\mathbf{I}_2}
        \Big)_{1\le n \le N} \right) \Delta x + \varepsilon x.
    \end{equation}
    Moreover,
the Hessian of $\mathcal{R}$ reads
    \begin{equation}
        (\forall x \in \R^N) \;\;
    \nabla^2 \mathcal{R}(x) = \lambda \Delta^\top \text{Diag}\left(\Big(
       {\omega(\| [\Delta x]_n \|) \mathbf{I}_2 + \frac{\dot{\omega}(\| [\Delta x]_n \|)}{\| [\Delta x]_n \|} [\Delta x]_n [\Delta x]_n^\top}\Big)_{1\le n\le N}
        \right) \Delta + \varepsilon \mathbf{I}_N,
    \end{equation}
    with the quotient $\dot{\omega}(t)/t$ understood by continuous extension at $t=0$.
    Since the eigenvalues of $[\Delta x]_n [\Delta x]_n^\top$ with $n\in \llbracket 1,N\rrbracket$
    are $\|[\Delta x]_n\|^2$ and 
    $0$, and, for all $t \in \mathbb{R}$, $\dot{\omega}(t) t + \omega(t) = \ddot{\vartheta}(t)$, we have,
    for each $2\times 2$ block diagonal term, 
    \begin{equation}
    \spnorm{\omega(\| [\Delta x]_n \|) \mathbf{I}_2 + \frac{\dot{\omega}(\| [\Delta x]_n \|)}{\| [\Delta x]_n \|} [\Delta x]_n [\Delta x]_n^\top}
    =  \max\left(\omega(\| [\Delta x]_n \|),|\ddot{\vartheta}(\| [\Delta x]_n \|)|\right).
\end{equation}
    
Moreover, 
\begin{align}
    (\forall t \in \mathbb{R}) \quad \ddot{\vartheta}(t) &= \frac{4\delta^2(2\delta^2 - 3t^2)}{(2\delta^2 + t^2)^3} \in \left[-\frac{1}{4\delta^2},\frac{1}{\delta^2}\right], \quad \text{and} \quad \omega(t)\in \Big(0, \frac{1}{\delta^2}\Big].
\end{align}


Hence,
\begin{equation}
    (\forall x \in \R^N) \quad \spnorm{\nabla^2 \mathcal{R}(x)} \leq \frac{\lambda}{\delta^2} \spnorm{\Delta}^2 + \varepsilon  \triangleq L_\mathcal{R}.
    \label{eq:R_gradlip}
\end{equation}
This shows that $\mathcal{R}$ has a Lipschitz continuous gradient on $\R^N$ (and thus on $[\epsilon_0,+\infty)^N$) with Lipschitz constant $L_\mathcal{R}$.


\color{black}
\paragraph{Lipschitz continuity of $\nabla h_z$}

Let us consider the three majorant forms from Table 1, associated with their respective domain $\mathcal{D}$, and setting for $\epsilon_0$. We provide in the table below, their associated Hessian expression, and Lipschitz constants of $\nabla h_z$, for some $z\in [\epsilon_0,+\infty)^N$, i.e., $L_{h_z} > 0$ such that
\begin{equation}
    (\forall (z,x)\in\mathcal{D}_0^2) \quad \spnorm{\nabla^2 h_z(x)} \leq L_{h_z}.
\end{equation}

\begin{table}[htb!]
    \centering
    \resizebox{0.95\linewidth}{!}{%

    \begin{tabular}{|c|c|c|c|c|c|}
    \hline
         Maj. family& $\mathcal{D}$& Form of $h_z$ for $z \in \mathcal{D}$& Hessian $\nabla^2 h_z$ for $z \in \mathcal{D}$ & $\mathcal{D}_0$ & $L_{h_z}$, for $z \in \mathcal{D}_0$\\
         \hline 
         Log-shift & $(-\rho,+\infty)$ & 
         $- \sum_{n=1}^N a_n(z) \log(x_n+\rho)$& $\mathrm{Diag}\left(\left(\frac{a_n(z)}{(x_n+\rho)^2}\right)_{1\leq n \leq N}\right)$ & $[0,+\infty)$ & $\frac{1}{\rho^2}\underset{1 \leq n \leq N}{\text{max }}a_n(z)$\\
    \hline
         Log-0 & $(0,+\infty)$ & 
         $- \sum_{n=1}^N a_n(z) \log(x_n)$& $\mathrm{Diag}\left(\left(\frac{a_n(z)}{x_n^2}\right)_{1\leq n \leq N}\right)$ &$[\epsilon_0,+\infty), \epsilon_0>0$  & $\frac{1}{\epsilon_0^2}\underset{1 \leq n \leq N}{\text{max }}a_n(z)$\\
    \hline
         Quadratic & $(-\rho,+\infty)$ & 
         $\frac{1}{2}\sum_{n=1}^N a_n(z) x_n^2$& $\mathrm{Diag}\left((a_n(z))_{1\leq n \leq N}\right)$ & $[0,+\infty)$  & $\underset{1 \leq n \leq N}{\text{max}}a_n(z)$\\
    \hline
    \end{tabular}
    }
    \caption{Hessian calculations and norm majorizations for each majorant family}
    \label{tab:hessian_hz}
\end{table}

As the iterates $(x^k)_{k \in \mathbb{N}}$ lie in a compact subset of $\mathcal{D}_0$ by \cref{pro:obj-cv} and the functions $(a_n)_{1\le n\le N}$ are continuous on $\mathcal{D}_0$, there exists $L_h > 0$ such that $(\forall k \in \mathbb{N}) \; L_{h_{x^k}} \leq L_h$.

\paragraph{Conclusion} Combining the above results, for every $k\in\mathbb{N}$ and every
$(x,x')\in\mathcal{D}_0^2$, we have
\begin{equation}
\label{eq:SM3.10}
\left\|
\bigl(\nabla f-\nabla h_{x^k}\bigr)(x)
-
\bigl(\nabla f-\nabla h_{x^k}\bigr)(x')
\right\|
\leq
\bigl(L_{\mathcal L}+L_R+L_h\bigr)\|x-x'\|.
\end{equation} 
Hence, Assumption~\ref{ass:A3:ii} holds with
\[
L=L_{\mathcal L}+L_R+L_h
\] and $\dom{g}=\mathcal{D}_0$, for the three considered majorant constructions.

\vspace{0.8cm}
 \section{Detailed calculations for MM updates}
 
\label{annexe:details_updates}

\paragraph{Log-shift Majorant}
    Let us now consider the separable Log-shift majorant \textbf{Maj-\majO} \eqref{eq:h0} for $\mathcal{L}$. We have, for all $z\in\mathcal{D}$,
\begin{align*}
   (\forall x\in\mathcal{D})\quad D_{h_{\majO,z}}(x,z) = -\sum_{n=1}^N a_{\majO,n}(z)\ln\left(\frac{x_n+\rho}{z_n+\rho}\right) - \frac{a_{\majO,n}(z)}{z_n+\rho}(x_n-z_n).
\end{align*}

Then, for every $n\in \llbracket 1,N\rrbracket$,
\begin{align*}
    (\forall x\in\mathcal{D})\quad  [\nabla_x Q_f(x,z) ]_n &= [\nabla f(z)]_n - \frac{a_{\majO,n}(z)}{x_n +\rho}+ \frac{a_{\majO,n}(z)}{z_n+\rho} + M_\mathcal{R} (x_n - z_n) \\
    &= d_n(z) - \frac{a_{\majO,n}(z)}{x_n +\rho} + M_\mathcal{R} x_n
\end{align*}
where $d_n(z) = [\nabla f(z)]_n + \frac{a_{\majO,n}(z)}{z_n+\rho} - M_\mathcal{R} z_n$. Then
\begin{align*}
     [\nabla_x Q_f(x,z) ]_n = 0 &\;\;\Leftrightarrow\;\;  M_\mathcal{R} x_n^2 + (d_n(z) + M_\mathcal{R} \rho) x_n+\rho d_n(z) - a_{\majO,n}(z) = 0, 
\end{align*}

 which yields the following Log-shift MM iteration:
 
\begin{equation}
    \boxed{(\forall k \in \N^\ast) \quad x^{k+1} = \operatorname{proj}_{\mathcal{D}_0}\left(\Big(\frac{\sqrt{(d_n(x^k)-M_\mathcal{R} \rho)^2+4M_\mathcal{R} a_{\majO,n}(x^k)}-d_n(x^k)-M_\mathcal{R} \rho}{2M_\mathcal{R} }\Big)_{1\leqslant n \leqslant N}\right).}
\end{equation}
The same goes for majorant \textbf{Maj-\majVIII} which has a similar structure.

\paragraph{Log-0 Majorant}
Let us now consider the separable Log-0 majorant \textbf{Maj-}$\majI$ for $\mathcal{L}$ \eqref{eq:h1}. 
We have, for all $z \in\mathcal{D}$,
\begin{equation*}
   (\forall x\in\mathcal{D}) \quad D_{h_{\majI,z}}(x,z) = -\sum_{n=1}^N a_{\majO,n}(z)\ln\left(\frac{x_n}{z_n}\right) - \frac{a_{\majO,n}(z)}{z_n}(x_n-z_n).
\end{equation*}

Then similarly to the previous majorant family, we have
\begin{equation*}
    [\nabla_x Q_f(x,z) ]_n = d_n(z) - \frac{a_{\majO,n}(z)}{x_n} + M_\mathcal{R} x_n
\end{equation*}
where $d_n(z) = [\nabla f(z)]_n + \frac{a_{\majO,n}(z)}{z_n} - M_\mathcal{R} z_n$. Then
\begin{align*}
     [\nabla_x Q_f(x,z) ]_n = 0 &\;\;\Leftrightarrow\;\;  M_\mathcal{R} x_n^2 + d_n(z) x_n - a_{\majO,n}(z) = 0, 
\end{align*}
which yields the following Log-0 MM iteration:
\begin{equation}
    \boxed{(\forall k \in \N^\ast) \quad x^{k+1} = \operatorname{proj}_{\mathcal{D}_0}\left(\Big(\frac{\sqrt{(d_n(x^k))^2+4M_\mathcal{R} a_{\majO,n}(x^k)}-d_n(x^k)}{2M_\mathcal{R} }\Big)_{1\leqslant n \leqslant N}\right).}
\end{equation}
A similar expression holds for majorant \textbf{Maj}-\majVI.
 
\paragraph{Quadratic majorant}
Let us finally consider the quadratic majorant \textbf{Maj-\majVp} \eqref{eq:h5p} for $\mathcal{L}$. The Bregman tangent majorant of $f$ at $z\in\mathcal{D}$ then reads: 

\begin{align*}
(\forall x\in\mathcal{D})\quad 
    Q_f(x,z) &= f(z) + \langle \nabla f(z),x-z\rangle + D_{h_{\majVp,z}}(x,z)+ \frac{M_\mathcal{R}}{2}\Vert x-z \Vert^2 &\text{ \eqref{eq:Qf_withdiv}} \\
    &= f(z) + \langle \nabla f(z),x-z\rangle +
    \frac12\sum_{n=1}^{N}a_{\majVp,n}(z)(x_n-z_n)^2+ \frac{M_\mathcal{R}}{2}\Vert x-z \Vert^2. &
\end{align*}
Then, we have 
\begin{equation}
     (\forall x\in\mathcal{D})\quad  \nabla_x Q_f(x,z) = \text{Diag}\big((a_{\majVp,n}(z)+M_\mathcal{R})_{1\le n \le N}\big)(x-z) + \nabla f(z),
\end{equation}
which yields the following quadratic majorant MM iteration \eqref{eq:VBMMM_update}:
\begin{equation}
\boxed{(\forall k \in \N^\ast) \quad x^{k+1} = \operatorname{proj}_{\mathcal{D}_0}\left(\Big(x_n^k - \frac{[\nabla f(x^k)]_n}{a_{\majVp,n}(x^k)+M_\mathcal{R}}\Big)_{1\leqslant n \leqslant N}\right).}
\end{equation}

\clearpage
\section{Synthetic view of majorant order relations} \textcolor{white}{.} 
\begin{figure}[htb!]
    \centering
\resizebox{0.65\textwidth}{!}{%
\begin{tikzpicture}[
  node distance = 1cm and 2cm,
  box/.style = {
    draw, rectangle,
    minimum width=3.2cm,
    minimum height=1cm,
    align=center,
    font=\small,
    fill=white,
    draw=black
  },
  arrow/.style = {-, black},
  every node/.style = {font=\small}
]

\node[box] (maj8) {\textbf{Maj-\majVIII} $\mu = \rho$ (Log-shift)};
\node[box, below=0.5cm of maj8] (maj0) {\textbf{Maj-\majO} (Log-shift)};
\node[box, below left=0.5cm and 1.5cm of maj0]  (maj1) {\textbf{Maj-\majI} (Log-0)};
\node[box, below=0.5cm of maj0]                  (maj2) {\textbf{Maj-\majII} (Log-shift)};
\node[box, below right=0.5cm and 1.5cm of maj0]  (maj3) {\textbf{Maj-\majIII} (Log-shift)};

\node[box] at (maj1 |- maj8) (maj6) {\textbf{Maj-\majVI} (Log-0)};

\node[box, below=1.5cm of maj3] (maj5)  {\textbf{Maj-\majV} (quadratic)};
\node[box, below=0.5cm of maj5] (maj5p) {\textbf{Maj-\majVp} (quadratic)};

\node[above=0.55cm of maj5, font=\small\itshape] (cond)
  {if $\zeta_m b_m = \rho \ \forall m$};

\draw[arrow] (maj8)     -- node[at start, rotate=-90, anchor=west, font=\small, black, fill=white, inner sep=1pt] {$\preceq$} (maj0);
\draw[arrow] (maj0)     -- node[at start, rotate=-90, anchor=west, font=\small, black, fill=white, inner sep=1pt] {$\preceq$} (maj2);
\draw[arrow] (maj6)     -- node[at start, rotate=-90, anchor=west, font=\small, black, fill=white, inner sep=1pt] {$\preceq$} (maj1);
\draw[arrow] (maj3)     -- node[at start, rotate=-90, anchor=west, font=\small, black, fill=white, inner sep=1pt] {} (cond.north);
\draw[arrow] (cond)     -- node[at start, rotate=-90, anchor=west, font=\small, black, fill=white, inner sep=1pt] {$\preceq$} (maj5);
\draw[arrow] (maj5)     -- node[at start, rotate=-90, anchor=west, font=\small, black, fill=white, inner sep=1pt] {$\preceq$} (maj5p);

\draw[arrow] (maj0) -- node[pos=0.05, rotate=-165, anchor=west, font=\small, black, fill=white, inner sep=1pt] {$\preceq$} (maj1);
\draw[arrow] (maj0) -- node[pos=0.24, rotate=-27,  anchor=east, font=\small, black, fill=white, inner sep=1pt] {$\preceq$} (maj3);
\end{tikzpicture}%
}

    \caption{Diagram representing all order relations (\cref{def:btm_order}) established between majorants in \cref{subsec:family-maj} (tightest ones on top).}
    \label{fig:orderrelations}
\end{figure}

\section{Number of projector calls for each majorant update} \textcolor{white}{.} 
\label{annex:nbproj}
\begin{table}[htb!]
    \centering
\footnotesize{
    \begin{tabular}{|c|c|c|c|}
    \hline
         Family & Majorant & Projections & Backprojections \\
    \hline
         \multirow{4}{*}{Log-shift} & Maj-\majO   & \multirow{4}{*}{1} & 2 \\
    \cline{2-2} \cline{4-4}
         & Maj-\majII $\;$(BPG) & & 1(*) \\
    \cline{2-2} \cline{4-4}
         & Maj-\majIII & & 2 \\
    \cline{2-2} \cline{4-4}
         & Maj-\majVIII & & 1 \\
    \hline
         \multirow{2}{*}{Log-0} & Maj-\majI  & \multirow{2}{*}{1} & 2 \\
    \cline{2-2} \cline{4-4}
         & Maj-\majVI & & 1 \\
    \hline
         \multirow{4}{*}{Quadratic} & Maj-\majV   & \multirow{4}{*}{1} & $M$ \\
    \cline{2-2} \cline{4-4}
         & Maj-\majVp $\;$(VMFB)  & & 2 \\
    \cline{2-2} \cline{4-4}
         & Maj-\majVII & & 2 \\
    \cline{2-2} \cline{4-4}
         & Maj-Lip (proj. gradient) & & 1 \\
    \hline
     \end{tabular}
}
    \caption{Number of projection/backprojection operations per iteration for each choice of majorant. \reviewchanges{(*) Before iterating, \textbf{Maj-}$\majII$ requires a precomputing step of $M^2$ backprojections (which in practice is done once for each phantom slice and then reused).}
    }
    \label{tab:operations}
\end{table}
\section{Details about the analytical simulation process}
\label{annex:simulation}

\reviewchanges{
In this proof-of-concept study, we used simulated data obtained from a real $^{18}$F-FDG high resolution brain PET exam (healthy volunteer) recorded with a HRRT PET system (Siemens Healthineers, Forchheim, Germany). After segmenting the reconstructed PET exam into 100 anatomical regions of interest using the T1-weighted MRI images, we built a piecewise-constant phantom from which we extracted two 2D slices. These central slices were chosen in order to include cortical and subcortical structures (basal ganglia). We then carried out analytical simulations to build synthetic sinograms (of size 336$\times$336) from this phantom as imaged by a Biograph TruePoint TrueV system (Siemens Healthineers, Forchheim, Germany) following the methodology described in \cite{analytical_simulator}, with the following direct model:
$$y_m=\mathcal{P}\Big(\underbrace{C_m^\mathrm{sens} C_m^\mathrm{att} X_m G}_{H_m}\bar{x}+\underbrace{s_m+r_m}_{b_m}\Big)$$
where, for line of response $m$,
\begin{itemize}
    \item to account for intrinsic physical resolution effects, the activity image $\bar{x}$ is first convolved with an isotropic two-dimensional Gaussian kernel (represented by a convolution matrix $G$) modeling the scanner point spread function in image space, with full width at half maximum (FWHM) of 4\,mm;
    \item $X_m$ is the corresponding geometrical projection operator, here implemented using Joseph's approach (Joseph, P.M. (1982). \textit{An Improved Algorithm for Reprojecting Rays through Pixel Images.} IEEE Transactions on Medical Imaging, 1, 192-196.), computed on the fly without assuming specific geometric structure of the camera;
    \item $C_m^\mathrm{sens}$ and $C_m^\mathrm{att}$ are the corresponding sensitivity and attenuation factors;
    \item $s_m$ and $r_m$ represent respectively the expectation of scattered and random coincidences in the line of response $m$ (strictly positive additive background);
    \item $\mathcal{P}$ is the Poisson noise operator;
    \item $y_m$ is the resulting measured data; for each slice, about 7 millions of counts are measured.
\end{itemize}
}
\vspace{0.5cm}

\section{Details about image quality metrics}
\label{annex:metrics}
\reviewchanges{We provide details about quality metrics calculations in \cref{tab:table_15s,tab:table_120s,tab:kkt_0p001} and \cref{fig:exreconstructions_slice65,fig:exreconstructions_slice71}. Herebelow, $x$ denotes the estimated image after the given stopping criterion, and $\bar{x}$ the ground-truth image, both in $[0,+\infty)^N$.
\begin{itemize}
    \item NRMSE (Normalized Root Mean Square Error): 
        $$\text{NRMSE}(x) = \frac{\sqrt{\frac{1}{N}{\sum_{n=1}^N (x_n-\bar{x}_n)^2}}}{\text{mean}(\bar{x})};$$
    \item SSIM (Structural Similarity Index Measure, Wang, Z. \textit{et al.} (April 2004), \textit{Image quality assessment: from error visibility to structural similarity}, in IEEE Transactions on Image Processing, vol. 13, no. 4, pp. 600-612): calculated using \texttt{scikit-image} with default parameters and a range covering the minimum and maximum values of the image.
    \item PSNR (Peak Signal to Noise Ratio): 
        $$\text{PSNR}(x) = 20 \log_{10}\left(\frac{\max(\bar{x})}{\sqrt{\frac{1}{N}\sum_{n=1}^N (x_n-\bar{x}_n)^2}}\right);$$
    \item CNR (Contrast to Noise Ratio):
        $$\text{CNR}(x) = \frac{\text{mean}((x_n)_{n\in\mathcal{N}_{\text{hot}}})-\text{mean}((x_n)_{n\in\mathcal{N}_{\text{cold}}})}{\sqrt{\text{std}((x_n)_{n\in\mathcal{N}_{\text{hot}}})^2+\text{std}((x_n)_{n\in\mathcal{N}_{\text{cold}}})^2}},$$
        where $\mathcal{N}_{\text{hot}}$ and $\mathcal{N}_{\text{cold}}$ designate two regions of interest where activity is respectively high and low for the tracer considered (in our case, the putamen or caudate nucleus as ``hot'' region, and the white matter as ``cold'' region).
\end{itemize}}

\makeatletter
\let\addcontentsline\oldaddcontentsline
\makeatother
\end{appendices}

\end{document}